\documentclass[onefignum,onetabnum,reqno]{siamart190516}




\usepackage{amsfonts}
\usepackage{graphicx,epstopdf}
\usepackage{mathrsfs}
\usepackage{bm}
\usepackage{multirow}
\usepackage{color}
\usepackage{hyperref} 
\usepackage{xcolor}

\usepackage{caption, subcaption}%
\usepackage{enumitem}   

\usepackage{geometry}   
\geometry{left=2.66cm,right=2.66cm,top=2.5cm,bottom=2.5cm}

\allowdisplaybreaks[3]
\raggedbottom

\ifpdf
  \DeclareGraphicsExtensions{.eps,.pdf,.png,.jpg}
\else
  \DeclareGraphicsExtensions{.eps}
\fi


\newsiamremark{remark}{Remark}
\newsiamremark{example}{Example}
\newsiamremark{hypothesis}{Hypothesis}
\crefname{hypothesis}{Hypothesis}{Hypotheses}
\newsiamthm{claim}{Claim}
\newsiamthm{assum}{Assumption}
\newsiamthm{observation}{Observation}

\headers{Energy Laws and Stability of Runge--Kutta Methods}{Zheng Sun, Yuanzhe Wei, Kailiang Wu}

\title{On Energy Laws and Stability of Runge--Kutta Methods for Linear Seminegative Problems} 

\author{
Zheng Sun\thanks{Department of Mathematics, The University of Alabama,
	Tuscaloosa, AL 35487, USA
		({\tt zsun30@ua.edu}).}
		\and
	Yuanzhe Wei\thanks{Department of Mathematics, Southern University of Science and Technology, Shenzhen, Guangdong 518055, China ({\tt weiyz2019@mail.sustech.edu.cn}).}
	\and 
	Kailiang Wu\thanks{Corresponding author. Department of Mathematics, Southern University of Science and Technology, and National Center for Applied Mathematics Shenzhen (NCAMS), Shenzhen, Guangdong 518055, China ({\tt wukl@sustech.edu.cn}). Tel: +86-755-88010575. The work of K.~Wu is supported in part by NSFC grant 12171227.}
}

\usepackage{amsopn}
\DeclareMathOperator{\diag}{diag}
\DeclareMathAlphabet\mathbfcal{OMS}{cmsy}{b}{n}


\usepackage{stmaryrd}

\newcommand{\myalpha}{\nu}
\newcommand{\mybeta}{\theta}

\newcommand{\mys}{s}
\newcommand{\myN}{N}
\newcommand{\kk}{\ell}


\newcommand{\dt}{\frac{d}{dt}}

\newcommand{\nm}[1]{\left\| #1 \right\|}

\newcommand{\qnm}[1]{\left\llbracket #1 \right\rrbracket}

\newcommand{\qip}[1]{\left[ #1 \right]}
\newcommand{\ip}[1]{\left\langle #1 \right\rangle}

\newcommand{\bfmyLambda}{\mathbf{D}}
\newcommand{\bfhatmyLambda}{\mathbf{\widehat{D}}}
\newcommand{\bftildemyLambda}{\mathbf{\tilde{D}}}
\newcommand{\mylambda}{d}
\newcommand{\mycfl}{\lambda}
\newcommand{\quand}{\quad\text{and}\quad}

\usepackage{algpseudocode}
\algnewcommand\algorithmicswitch{\textbf{switch}}
\algnewcommand\algorithmiccase{\textbf{case}}
\algnewcommand\algorithmicdefault{\textbf{default}}
\algnewcommand\Assert[1]{\State \algorithmicassert(#1)}%
\algdef{SE}[SWITCH]{Switch}{EndSwitch}[1]{\algorithmicswitch\ #1\ \algorithmicdo}{\algorithmicend\ \algorithmicswitch}%
\algdef{SE}[CASE]{Case}{EndCase}[1]{\algorithmiccase\ #1}{\algorithmicend\ \algorithmiccase}%
\algdef{SE}[CASE]{Default}{EndDefault}{\algorithmicdefault}{\algorithmicend\ \algorithmic}%
\algtext*{EndSwitch}%
\algtext*{EndCase}%
\algtext*{EndDefault}%

\newmuskip\pFqmuskip
\newcommand*\pFq[6][8]{%
	\begingroup 
	\pFqmuskip=#1mu\relax
	\mathchardef\normalcomma=\mathcode`,
	\mathcode`\,=\string"8000
	\begingroup\lccode`\~=`\,
	\lowercase{\endgroup\let~}\pFqcomma
	{}_{#2}F_{#3}{\left(  \genfrac..{0pt}{}{#4}{#5} \right)}%
	\endgroup
}
\newcommand{\pFqcomma}{{\normalcomma}\mskip\pFqmuskip}







\begin{document}

\maketitle

\begin{abstract}
	This paper presents a systematic theoretical framework to 
	 derive the energy identities 
	 of {\em general implicit and explicit} Runge--Kutta (RK) methods for linear seminegative systems. 
	 It generalizes the stability analysis of {\em explicit} RK methods in [Z.~Sun and C.-W.~Shu, {\em SIAM J. Numer. Anal.}, 57 (2019), pp.~1158--1182]. 
		The established energy identities provide a precise characterization
	on whether and how the energy dissipates in the RK discretization, 
	thereby leading to  weak and strong stability criteria of 
	RK methods. Furthermore, we discover a unified energy identity for all the diagonal Pad\'e approximations,  
	based on an analytical Cholesky type decomposition of 
	a class of symmetric matrices. The structure of the matrices is very complicated, rendering 
	the discovery of the unified energy identity and the proof of the decomposition
	 highly challenging.  
	Our proofs involve the construction of technical 
	combinatorial identities and novel techniques from the theory of hypergeometric series. 
	Our framework is motivated by a discrete analogue of integration by parts technique 
	and a series expansion of the continuous energy law. 
	In some special cases, our analyses establish a close connection between  
	the continuous and discrete energy laws, enhancing our understanding of their intrinsic mechanisms. 
	Several specific examples of implicit methods are given to illustrate the discrete energy laws. A few numerical examples further confirm the theoretical properties. 

\end{abstract}


\begin{keywords}
	Runge--Kutta methods, energy laws, $L^2$-stability, Pad\'e approximations, energy method
\end{keywords}

\begin{AMS}
  65M12, 65L06, 65L20, 15A23
\end{AMS}

\section{Introduction}
This paper is concerned with the autonomous linear seminegative differential systems in a general form: 
\begin{equation}\label{eq-odes}
	\dt u = L u, \quad u = u(t)\in L^2([0,T];V),
\end{equation}
where $V$ is a finite or infinite dimensional real Hilbert space equipped with the inner product $\ip{\cdot, \cdot}$ and the induced norm $\| \cdot\|$, and $L$ is a bounded linear seminegative operator satisfying $\ip{Lv,v} \leq 0$ for all $v\in V$. (The operator $L$ is not necessarily normal, namely, it may not commute with its adjoint.) 
A typical example of \cref{eq-odes} is the linear seminegative ordinary differential equations (ODEs) with  $V=\mathbb R^{N_d}$, 
$\ip{\cdot, \cdot}$ being the standard $l^2$ inner product, and the operator $L$ being a seminegative  $N_d\times N_d$ real constant matrix. 
Such ODEs may also arise from suitable semi-discrete schemes 
for some linear partial differential equations (PDEs), such as linear hyperbolic or convection-diffusion equations, etc. 
The seminegative operator $L$ induces a semi-inner-product $\qip{\cdot,\cdot }$ on $V$ defined by 
\begin{equation}\label{eq:Def-semi-inner}
	\qip{w,v} := -\ip{Lw,v}-\ip{w,Lv}.
\end{equation}
The corresponding semi-norm is denoted as $\qnm{v} := \sqrt{\qip{v,v}}$. Then it can be seen that the system \eqref{eq-odes} admits the following energy dissipation law
\begin{equation}\label{eq:WKL01}
	\dt \nm{u}^2 = \left \langle{\dt u, u} \right \rangle+ \left \langle {u, \dt u} \right \rangle = \ip {Lu, u} + \ip {u, Lu} = -\qnm{u}^2 \le 0.
\end{equation}
Furthermore, if we integrate \cref{eq:WKL01} in time from $t^n$ to $t^{n+1} := t^n +\tau$ with $\tau>0$, then it yields
\begin{equation}\label{eq:ode-ed}
\nm{u(t^{n+1})}^2 - \nm{u(t^{n})}^2 = -\int_{0}^\tau \qnm{u(t^n+\widehat{\tau})}^2\mathrm{d}\widehat{\tau} \le 0. 
\end{equation}

The Runge--Kutta (RK) methods are widely used in temporal discretization for the approximate solutions of ODEs and time-dependent PDEs. 
In this paper, we discretize system \eqref{eq-odes} with RK methods, and we wish to establish a systematic framework to study how the energy law \eqref{eq:ode-ed} is approximated in generic RK discretization. 
The discrete energy laws are important and helpful for further understanding the stability of RK methods,     
which is a classical topic in numerical analysis. Over the past decades, rich mathematical theories on the stability of RK methods  have been developed both in the ODE settings (see \cite[Chapter IV]{wanner1996solving}, \cite[Chapter 3]{butcher2016numerical},  and references therein) and in the context of numerical PDEs (see \cite{burman2010explicit,zhang2010stability,deriaz2012stability,wang2015stability,xu20192,xu2020superconvergence,drake2021convergence} and references therein).

One classical way to analyze the stability of RK methods is through the eigenvalue analysis, which typically focuses on the scalar ODE $\frac{d}{dt} u = \lambda u$ with a complex constant $\lambda$. Specifically, an RK method applied to this scalar ODE reduces to the iteration $u^{n+1} = \mathcal{R}(\tau \lambda) u^n$ and the stability criterion is then imposed as $|\mathcal{R}(\tau \lambda)|\leq 1$. In the special case that the stability region (where $|\mathcal{R}(\tau \lambda)|\leq 1$ holds) contains the left complex plane, the methods are called A-stable \cite{dahlquist1963special}. It is noted that for an A-stable RK method, the unconditional stability for the scalar equation implies the $L^2$-stability for the linear seminegative system \eqref{eq-odes} in the sense that $\nm{u^{n+1}}\leq \nm{u^n}$. A proof of this implication was  given in \cite[Chapter IV. 11]{wanner1996solving} based on a lemma by von Neumann \cite{neumann1950spektraltheorie}; see also \cite{hairer1982stability}. However, for the RK methods that are not A-stable, special attention should be paid when extending the analysis from the scalar equation to the ODE system \eqref{eq-odes}. If the operator $L$ in \eqref{eq-odes} is normal, namely it commutes with its adjoint, then the system \eqref{eq-odes} can be unitarily diagonalized into decoupled scalar equations. In this case, the eigenvalue analysis will provide a both necessary and sufficient stability criterion. However, when $L$ is not normal, which is generic for the ODE system \eqref{eq-odes} obtained from semi-discrete PDE schemes, the 
eigenvalue analysis gives only necessary but possibly insufficient conditions for stability. 
This is due to the gap between the spectral radius and the operator norm. 
Therefore, the eigenvalue analysis may sometimes give misleading conclusions on the time step constraint \cite{iserles2009first} or the stability property \cite{sun2017rk4}.

To overcome the above-mentioned limitation, the energy method can be used as an alternative approach for stability analysis, which seeks certain energy identity or inequality. 
 For implicit RK methods, their BN stability and algebraic stability \cite{burrage1979stability, wanner1996solving} were analyzed based on the energy method. For explicit RK methods, one stream of the research concerns the coercive operators \cite{levy1998semidiscrete}, which typically arise from diffusive problems such as method-of-lines schemes for the heat equation. It was shown that the Euler forward method is able to preserve the monotonous decay property $\nm{u^{n+1}}\leq \nm{u^n}$ under suitable time step constraint \cite{gottlieb2001strong}. We will refer to this property as strong stability (sometimes also termed as monotonicity or monotonicity-preserving property in the literature \cite{higueras2005monotonicity, ketcheson2019relaxation}). This stability property can be extended to all strong-stability-preserving RK methods \cite{gottlieb2001strong,ketcheson2011strong,BrestenGottliebGrantHiggsKetcheson2017,IsherwoodGrantGottlieb2018}, which are constructed as convex combinations of Euler forward steps. In particular, those RK methods reducing to truncated Taylor expansions are all of such convex combination forms 
 and thus strongly stable \cite{gottlieb2001strong}. These arguments also coincide with the contractivity analysis under the so-called circle condition in \cite{spijker1983contractivity, kraaijevanger1991contractivity} and can be extended to nonlinear problems. However, such arguments may not be generally applied to noncoercive problems that commonly arise from semidiscrete schemes for wave type equations. A high-order energy expansion has to be carried out. 
 Motivated by the studies on the third-order \cite{tadmor2002semidiscrete} and the fourth-order \cite{sun2017rk4,ranocha2018l_2} explicit RK methods, Sun and Shu \cite{sun2019strong} proposed a general framework on strong stability analysis for linear seminegative problems using the energy method. The essential idea of the novel framework \cite{sun2019strong} is to inductively apply a discrete analogue of integration by parts, which was inspired by the stability analysis of PDEs. In particular, it was  proved in \cite{sun2019strong} that all linear RK methods corresponding to $p$th order truncated Taylor expansions are strongly stable if $p\equiv 3~(\bmod~4)$ and are not strongly stable if $p\equiv 1~{\rm or}~2~(\bmod~4)$. 
It is worth noting that the stability analysis in \cite{sun2019strong} is closely related to that of the RK discontinuous Galerkin schemes for linear advection equation by Xu \emph{et al.}~in   \cite{xu20192,xu2020superconvergence}. 
 For nonlinear or nonautonomous problems, the requirement for strong stability may lead to order barriers  \cite{ranocha2021strong,ranocha2020energy}. Remedy approaches to enforce strong stability were also studied recently, including the relaxation RK methods in \cite{ketcheson2019relaxation, ranocha2020relaxation, ranocha2020relaxationH} and the stabilization with artificial viscosity in \cite{offner2020analysis,sun2021enforcing} and references therein.

It is worth particularly mentioning those implicit RK methods associated with 
 the Pad\'e approximations, which are the optimal rational approximation to the exponential function for given degrees of the numerator and denominator. 
 	The proof of A-stability of the diagonal Pad\'e approximations may be dated back to  \cite{birkhoff1965discretization}. Then it was shown that the first and the second subdiagonals in the Pad\'e table are also A-stable \cite{ehle1973stable,ehle1975two}, but all the others are not A-stable \cite{wanner1978order}. It is also worth noting that some of the Pad\'e approximations correspond to the stability functions of certain collocation methods such as the Gauss and Radau methods \cite[Table 5.13]{wanner1996solving}. The analysis of algebraic stability for those collocation methods \cite{hairer1981algebraically} could also lead to the $L^2$-stability of the corresponding Pad\'e approximations.

In this paper, we generalize the stability analysis of {\em explicit} RK methods in \cite{sun2019strong} 
and establish a systematic theoretical framework for analyzing {\em general implicit and explicit} RK methods. 
The efforts and novelty of this paper are summarized as follows. 
\begin{itemize}[leftmargin=*]
	\item  We present a universal framework to derive the energy identity of generic RK method for 
	general linear seminegative systems \cref{eq-odes}. 
	The energy identity provides a precise characterization 
	on how the energy law \eqref{eq:ode-ed} is approximated 
	and whether the energy dissipation property is preserved 
	in the RK discretization. 
	As a result, the established energy identities lead to weak and strong stability criteria of 
	RK methods. 
	\item Our framework is motivated by 
	a series expansion of the {\em continuous} energy law \cref{eq:ode-ed} and 
	a {\em discrete} analogue of {\em integration by parts} technique. 
	Hence we also refer to our energy identities 
	as {\em discrete energy laws}. 
	Our analyses in some special cases establish a close connection between  
	the continuous and discrete energy laws. 
	The findings clearly demonstrate the 
	unity of continuous and discrete objects. 
	\item Besides the different motivations, some other aspects of our
	framework are also quite different from those of the eigenvalue analysis and the traditional energy approaches such the algebraic stability analysis. 
	In our discrete energy laws, the energy dissipation is 
	carefully expanded 
	in terms of the proposed semi-norm $\qnm{\cdot}$ associated with the operator $L$. 
	Moreover, our expansion 
	is formulated as a high-order polynomial of the time stepsize $\tau$, which can be compared with 
	the infinite series expansion in the continuous case. 
	\item Most notably, we discover the unified discrete energy law for all the diagonal Pad\'e approximations of arbitrary orders. 
	Such unified energy law is established based on an analytical Cholesky type decomposition of 
	a class of symmetric matrices. The structure of the matrices is extremely complicated and their elements  involve complex summations of factorial products; see \cref{eq:gammaPade2}. 
	As a result, the discovery of the unified energy law and 
	the proof of the decomposition are highly nontrivial and challenging; see \cref{thm:pade-cholesky} 
	and its proof in \cref{sec:choleskyPade}. 
	Besides, our analyses involve the construction of very technical 
	combinatorial identities and some novel techniques from the theory of hypergeometric series, which seem to be rarely used in previous RK stability analyses and  may shed new lights on  
	future developments in this direction. 
	\item It is worth noting that the proposed framework applies to a generic RK method, which can be either 
	implicit or explicit, unconditionally stable (A-stable) or conditionally stable (not A-stable). 
	We provide several specific examples of implicit methods in \cref{sec:ex} to further understand the proposed discrete energy laws. A few numerical examples are also given in \cref{sec:num} to confirm the theoretical results. 
\end{itemize}

The paper is organized as follows. We study the continuous energy law in 
\cref{sec:eeode} and present the systematic theoretical framework in \cref{sec:RK} to derive the discrete energy laws of general RK methods and the stability analysis. Examples on implicit RK methods are given in \cref{sec:ex}. We derive the unified discrete energy law of diagonal Pad\'e approximations in 
\cref{sec:pade} and present numerical results in \cref{sec:num} before conclusions in \cref{sec:conclusions}. For better readability, some technical proofs are presented in the appendices.


\section{Energy law at continuous level}\label{sec:eeode}

In this section, we derive a series expansion of the continuous energy law \eqref{eq:ode-ed} for the linear seminegative system \eqref{eq-odes}. The main result is given below.

\begin{theorem}\label{thm:ode-ed-series}
	The energy law of the linear seminegative problem \eqref{eq-odes} has the series expansion
	\begin{equation}\label{eq:ode-ed-series}
	\nm{u(t^n+\tau)}^2 - \nm{u(t^n)}^2 = -\sum_{k = 0}^\infty \widehat{\mylambda}_k \tau^{2k+1}\qnm{L^k \widehat{u}^{(k)}}^2,
	\end{equation}
	where 
	\begin{equation}\label{eq:uhatk}
	\widehat{u}^{(k)} = \sum_{j = k }^\infty \widehat{\mu}_{k,j}(\tau L)^{j-k} u(t^n),
	\end{equation}
	with $\widehat{\mylambda}_k$ and $\widehat{\mu}_{k,j}$ defined by  
	\begin{equation}\label{eq:lambdamu_hat}
		\widehat{\mylambda}_k :=  \frac{(k!)^2}{(2k)!(2k+1)!}, 
		\qquad 	\widehat{\mu}_{k,j} := \frac{(2k+1)!j!}{k!(j-k)!(k+j+1)!}
		 \qquad \forall k,j\in \mathbb N,~j\ge k.
	\end{equation}
\end{theorem}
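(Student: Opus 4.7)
The plan is to match the Taylor coefficients of the two sides of \cref{eq:ode-ed-series} in $\tau$, thereby reducing the theorem to a purely combinatorial identity. Starting from the continuous energy law \cref{eq:ode-ed}, I would substitute $u(t^n+s)=e^{sL}u(t^n)=\sum_{j\ge 0}\frac{s^j L^j u(t^n)}{j!}$ into the integrand and integrate term-by-term. With the shorthand $T_{j,l} := -\ip{L^{j+1}u(t^n),\,L^l u(t^n)}-\ip{L^j u(t^n),\,L^{l+1}u(t^n)} = T_{l,j}$, this produces
\begin{equation*}
\nm{u(t^n+\tau)}^2-\nm{u(t^n)}^2 \;=\; -\sum_{m=0}^\infty\frac{\tau^{m+1}}{m+1}\sum_{\substack{j+l=m\\ j,l\ge 0}}\frac{T_{j,l}}{j!\,l!},
\end{equation*}
the interchange of sum and integral being justified by the boundedness of $L$.

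Expanding the right-hand side of \cref{eq:ode-ed-series} in the same variables, \cref{eq:uhatk} gives $L^k\widehat{u}^{(k)}=\sum_{j\ge k}\widehat{\mu}_{k,j}\tau^{j-k}L^j u(t^n)$, and hence $\tau^{2k+1}\qnm{L^k\widehat{u}^{(k)}}^2 = \sum_{j,l\ge k}\widehat{\mu}_{k,j}\widehat{\mu}_{k,l}\,\tau^{j+l+1}\,T_{j,l}$. Summing over $k$ and swapping the order of summation, the right-hand side of \cref{eq:ode-ed-series} becomes $-\sum_m\tau^{m+1}\sum_{j+l=m}T_{j,l}\sum_{k=0}^{\min(j,l)}\widehat{\mylambda}_k\widehat{\mu}_{k,j}\widehat{\mu}_{k,l}$. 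Since the equality must hold for every bounded seminegative $L$ and every $u(t^n)$, the coefficients of each basic inner product $\ip{L^p u(t^n), L^q u(t^n)}$ on the two sides must agree, reducing the theorem to the scalar claim
\begin{equation*}
\sum_{k=0}^{\min(j,l)}\widehat{\mylambda}_k\,\widehat{\mu}_{k,j}\,\widehat{\mu}_{k,l} \;=\; \frac{1}{(j+l+1)\,j!\,l!}, \qquad \forall\,j,l\in\mathbb{N}.
\end{equation*}

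To verify this identity I would substitute the explicit formulas from \cref{eq:lambdamu_hat}, cancel factorials, and rewrite the summand using Pochhammer symbols. After pulling out the $k$-independent prefactor, the claim becomes the terminating hypergeometric evaluation
\begin{equation*}
\sum_{k=0}^{\min(j,l)}(2k+1)\frac{(-j)_k(-l)_k}{(j+2)_k(l+2)_k} \;=\; \frac{(j+1)(l+1)}{j+l+1}.
\end{equation*}
A telescoping proof is driven by the elementary polynomial identity $(k-j)(k-l)-(j+k+1)(l+k+1) = -(j+l+1)(2k+1)$, which motivates the antidifference
\begin{equation*}
g(k) := -\frac{(j+k+1)(l+k+1)}{j+l+1}\cdot\frac{(-j)_k(-l)_k}{(j+2)_k(l+2)_k}.
\end{equation*}
A direct computation using $(-j)_{k+1}=(k-j)(-j)_k$ and $(j+2)_{k+1}=(j+k+2)(j+2)_k$ (together with the analogous relations for $l$) and the polynomial identity above shows $g(k+1)-g(k)=(2k+1)(-j)_k(-l)_k/[(j+2)_k(l+2)_k]$. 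The sum therefore collapses to $g(\min(j,l)+1)-g(0)$; the boundary term vanishes because $(-n)_{n+1}=0$ for every $n\in\mathbb{N}$, and $-g(0)=(j+1)(l+1)/(j+l+1)$ is exactly the desired right-hand side.

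The main obstacle is locating the antidifference $g$: the $(2k+1)$ weight obstructs a direct appeal to classical hypergeometric summations such as the Saalsch\"utz or Chu--Vandermonde formulas, and although Gosper's algorithm guarantees that a hypergeometric antidifference exists, recognizing the clean closed form above hinges on the polynomial identity noted. The remaining steps --- term-by-term integration and the coefficient-matching argument --- are routine under the standing assumption that $L$ is bounded.
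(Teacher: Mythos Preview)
Your argument is correct and structurally close to the paper's: both start from the integral energy law \eqref{eq:ode-ed}, expand $u(t^n+\widehat\tau)=e^{\widehat\tau L}u(t^n)$ as a power series, integrate term by term, and reduce everything to the algebraic identity
\[
\sum_{k=0}^{\min(j,l)}\widehat{\mylambda}_k\,\widehat{\mu}_{k,j}\,\widehat{\mu}_{k,l}=\frac{1}{(j+l+1)\,j!\,l!},
\]
which is precisely the entrywise statement of the Cholesky-type decomposition $\pmb{\widehat{\Upsilon}}=-\mathbf{\widehat U}^\top\bfhatmyLambda\mathbf{\widehat U}$ in \cref{lem:cholHilbert}. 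The paper packages the passage from the bilinear expansion to the sum of squares as \cref{lem:quadraticform}, and proves \cref{lem:cholHilbert} by recognising $\pmb{\widehat{\Upsilon}}$ as a diagonal rescaling of the Hilbert matrix and quoting its known Cholesky factorisation. Your route is more self-contained: you prove the scalar identity directly via the explicit antidifference $g(k)$, driven by the polynomial identity $(k-j)(k-l)-(j+k+1)(l+k+1)=-(j+l+1)(2k+1)$. That telescoping proof is clean and avoids any external reference, which is a genuine advantage; conversely, the paper's formulation via \cref{lem:quadraticform} isolates a reusable mechanism that is invoked again for the discrete energy laws in \cref{thm:energy}. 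On the analytic side, the paper carries out the limit exchange carefully---truncating to $u_N$, applying dominated convergence first to swap $\lim_N$ with the time integral and then again to swap $\lim_N$ with the infinite $k$-sum---whereas you label these steps as routine. They are indeed justifiable by the boundedness of $L$ and absolute convergence, but note that the second interchange (reordering the triple sum over $k,j,l$) is exactly where the paper invests effort (see the bounds around \eqref{WKL331}--\eqref{WKL332}); you should at least state the absolute-convergence estimate that licenses it.
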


The significance of the expansion \cref{eq:ode-ed-series} lies in that each term in the expansion clearly shows the energy dissipation order with respect to $\tau$. This will help to gain some 
insights on deriving similar expansions for the discrete energy laws of RK methods in \cref{sec:RK}.  \cref{thm:ode-ed-series} will also be useful for establishing a connection between the continuous energy law and the discrete energy laws in \cref{sec:connection}. 
It is also worth noting that the infinite series $\widehat{u}^{(k)}$ in \cref{eq:uhatk} is well-defined, because 
\begin{align*}
	\nm{\widehat{u}^{(k)}} \leq \frac{(2k+1)!}{k!}\sum_{j=k}^\infty \frac{ (\tau\nm{ L })^{j-k} }{(j-k)!}  \nm{u(t^n)} = \frac{(2k+1)!}{k!}e^{\tau \nm{L}}\nm{u(t^n)}<\infty,
\end{align*}
where and hereafter the operator norm of $L$ is defined as $\|L\|:=\sup\{L v:~\| v \|\le 1, v \in V \}$.
 

The proof of \cref{thm:ode-ed-series} is fairly technical and is based on the following two lemmas. 
To improve the readability of the paper, we place 
the detailed proof of \cref{thm:ode-ed-series} in \cref{sec:proof:ode-ed-series}, right after 
the proofs of \cref{lem:quadraticform,lem:cholHilbert} in respectively 
 \cref{sec:proof:quadraticform,sec:proof:cholHilbert}. Note that \cref{lem:quadraticform} 
 will also be useful in deriving the discrete energy laws in \cref{sec:RK}. 

\begin{lemma}\label{lem:quadraticform}
	Let $N$ be a non-negative integer. Assume that the matrix $\pmb{\Upsilon} = (\gamma_{i,j})_{i,j = 0}^\myN$ is negative semidefinite with the Cholesky type decomposition $\pmb{\Upsilon} = -\mathbf{U}^\top \bfmyLambda \mathbf{U}$, where $\mathbf{U} = (\mu_{k,j})_{k,j = 0}^{\myN}$ is an upper triangular matrix and $\mathbf{\bf D} = \diag(\{\mylambda_k\}_{k=0}^\myN)$ is a diagonal matrix with nonnegative entries. Then for any $v \in V$, it holds that 
	\begin{equation}\label{eq:energyexp-odes}
		\sum_{i  = 0}^{\myN} \sum_{j  = 0}^{\myN} \gamma_{i,j}\tau^{i+j+1} \qip{L^i v, L^j v} = -\sum_{k = 0}^\myN \mylambda_k\tau^{2k+1} \qnm{L^k v^{(k)}}^2\leq 0, 
	\end{equation}
	where $v^{(k)} = \sum_{j = k}^\myN \mu_{k,j} (\tau L)^{j-k} v$. 
\end{lemma}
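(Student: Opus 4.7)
The plan is to substitute the Cholesky type decomposition directly into the left-hand side of \cref{eq:energyexp-odes} and reorganize the resulting triple sum. Writing the decomposition componentwise, we have $\gamma_{i,j} = -(\mathbf{U}^\top \bfmyLambda \mathbf{U})_{i,j} = -\sum_{k=0}^{\min(i,j)} \mu_{k,i}\, \mylambda_k\, \mu_{k,j}$, where the upper bound $\min(i,j)$ reflects the upper triangularity of $\mathbf{U}$. Substituting this into the left-hand side and interchanging the order of summation (which is finite and hence unproblematic) yields
\begin{equation*}
\sum_{i=0}^{\myN}\sum_{j=0}^{\myN} \gamma_{i,j} \tau^{i+j+1} \qip{L^i v, L^j v}
= -\sum_{k=0}^{\myN} \mylambda_k \, \tau \sum_{i=k}^{\myN}\sum_{j=k}^{\myN} \mu_{k,i}\mu_{k,j}\, \qip{\tau^i L^i v,\tau^j L^j v}.
\end{equation*}

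The next step is to exploit the bilinearity and symmetry of the semi-inner-product $\qip{\cdot,\cdot}$ (which follow directly from its definition \cref{eq:Def-semi-inner}) to recognize the inner double sum as the semi-norm of a single vector. Pulling the factor $\tau^k L^k$ out of both $\mu_{k,i} \tau^i L^i v$ and $\mu_{k,j} \tau^j L^j v$, one obtains $\sum_{i=k}^{\myN} \mu_{k,i} \tau^i L^i v = \tau^k L^k v^{(k)}$, so that the inner double sum collapses to $\qnm{\tau^k L^k v^{(k)}}^2 = \tau^{2k}\qnm{L^k v^{(k)}}^2$. Combining with the outer factor $\tau$ produces $\tau^{2k+1}$ and reproduces the right-hand side of \cref{eq:energyexp-odes}. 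Nonpositivity then follows immediately from $\mylambda_k\ge 0$ and $\qnm{L^k v^{(k)}}^2\ge 0$.

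The proof is essentially a careful bookkeeping exercise, so there is no real obstacle; the main thing to verify is that the definition of $v^{(k)}$ in the statement exactly matches the partial sum produced after factoring out $\tau^k L^k$, and that the convention $\mu_{k,i} = 0$ for $k>i$ (built into the upper triangularity of $\mathbf{U}$) is compatible with reindexing the summation ranges from $\{(i,j): 0\le i,j\le \myN,\ k\le \min(i,j)\}$ to $\{(i,j): k\le i,j\le \myN\}$. Since $\qip{\cdot,\cdot}$ need not be an inner product (only a semi-inner-product), it is also worth noting explicitly that bilinearity and symmetry alone suffice for this manipulation; positive definiteness is not required.
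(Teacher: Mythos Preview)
Your proposal is correct and follows essentially the same approach as the paper's proof: substitute the componentwise decomposition $\gamma_{i,j}=-\sum_k \mu_{k,i}\mylambda_k\mu_{k,j}$, interchange the finite sums, and use bilinearity of $\qip{\cdot,\cdot}$ to collapse the inner double sum into $\qnm{\tau^k L^k v^{(k)}}^2$. Your additional remarks on upper triangularity and the sufficiency of bilinearity/symmetry are accurate refinements but do not alter the argument.
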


\begin{lemma}\label{lem:cholHilbert}
	Let $\pmb{\widehat{\Upsilon}} = (\widehat{\gamma}_{i,j})_{i,j = 0}^\myN$ and $\widehat{\gamma}_{i,j} = - 
	\frac{1}{ i! j ! (i+j+1) }$. Then it holds that 
	\begin{equation*}
		\pmb{\widehat{\Upsilon}} = - \mathbf{\widehat{U}}^\top \bfhatmyLambda \mathbf{\widehat{U}}, 
	\end{equation*}
	where $\bfhatmyLambda= \diag(\{\widehat{\mylambda}_k\}_{k = 0}^\myN)$ is a diagonal matrix with $\widehat{\mylambda}_{k}$ defined in \eqref{eq:lambdamu_hat}, and $\mathbf{\widehat{U}} = (\widehat{\mu}_{k,j})_{k,j = 0}^\myN$ is an upper triangular matrix with $\widehat{\mu}_{k,j}$ defined in \eqref{eq:lambdamu_hat} for $j\ge k$ and $\widehat{\mu}_{k,j}=0$ for $j<k$.
\end{lemma}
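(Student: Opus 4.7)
The plan is to recognize $-\pmb{\widehat{\Upsilon}}$ as a Gram matrix. A direct integration gives
\[ \widehat{\gamma}_{i,j} \;=\; -\frac{1}{i!\,j!\,(i+j+1)} \;=\; -\int_0^1 \frac{x^i}{i!}\cdot \frac{x^j}{j!}\,dx, \]
so $-\pmb{\widehat{\Upsilon}}$ is the Gram matrix of the functions $\{x^k/k!\}_{k=0}^{\myN}$ in $L^2([0,1])$. A Cholesky-type decomposition of this Gram matrix corresponds precisely to expanding each $x^j/j!$ in a family of mutually orthogonal polynomials of increasing degree, so my strategy is to construct such a family explicitly, identify the expansion coefficients, and read off the entries of $\mathbf{\widehat{D}}$ and $\mathbf{\widehat{U}}$.

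The natural candidates are scaled shifted Legendre polynomials on $[0,1]$, normalized so that the condition $\widehat{\mu}_{k,k}=1$ is automatic. I will take
\[ p_k(x) \;:=\; \frac{1}{(2k)!}\,\frac{d^k}{dx^k}\bigl[x^k(x-1)^k\bigr], \qquad k=0,1,\dots,\myN, \]
which has degree $k$ and leading coefficient $1/k!$. From this Rodrigues-type formula together with $k$ applications of integration by parts---whose boundary terms vanish because $x^k(x-1)^k$ has zeros of order $k$ at both endpoints---one verifies that $\int_0^1 p_k(x) p_l(x)\,dx = \widehat{\mylambda}_k\,\delta_{kl}$. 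Writing next $\frac{x^j}{j!}=\sum_{k=0}^{j} c_{k,j}\,p_k(x)$ (with $c_{k,j}=0$ forced for $j<k$ by degree, which reproduces the upper-triangular structure of $\mathbf{\widehat{U}}$), orthogonality yields $c_{k,j}=\widehat{\mylambda}_k^{-1}\int_0^1 \frac{x^j}{j!}\,p_k(x)\,dx$. The remaining integral is evaluated by a second round of $k$ integrations by parts followed by the beta identity $\int_0^1 x^j(1-x)^k\,dx = \frac{j!\,k!}{(j+k+1)!}$, and a clean cancellation of factorials then delivers $c_{k,j}=\frac{(2k+1)!\,j!}{k!\,(j-k)!\,(j+k+1)!}=\widehat{\mu}_{k,j}$, exactly as required.

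The conclusion is then immediate: substituting the expansion into the Gram-matrix representation gives
\[ \widehat{\gamma}_{i,j} \;=\; -\sum_{k,l}\widehat{\mu}_{k,i}\,\widehat{\mu}_{l,j}\int_0^1 p_k(x)\,p_l(x)\,dx \;=\; -\sum_{k=0}^{\min(i,j)}\widehat{\mu}_{k,i}\,\widehat{\mylambda}_k\,\widehat{\mu}_{k,j}, \]
which is precisely the $(i,j)$ entry of $-\mathbf{\widehat{U}}^\top \mathbf{\widehat{D}}\,\mathbf{\widehat{U}}$. I expect the main obstacle to be arithmetic rather than conceptual: the normalization of $p_k$ must be chosen just right so that both $\widehat{\mylambda}_k$ and $\widehat{\mu}_{k,j}$ emerge in the exact closed forms prescribed by \eqref{eq:lambdamu_hat}, and the factorial manipulations at the end need to be carried out carefully. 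A purely algebraic alternative---proving the combinatorial identity $\sum_{k=0}^{\min(i,j)}(2k+1)\binom{2i+1}{i-k}\binom{2j+1}{j-k}=\frac{(2i+1)!\,(2j+1)!}{(i!)^2(j!)^2(i+j+1)}$ directly, e.g.\ by hypergeometric summation---certainly exists, but it appears considerably more delicate than the orthogonal-polynomial route above.
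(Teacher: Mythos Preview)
Your proof is correct, and it takes a genuinely different route from the paper. The paper's argument (Appendix~B) is essentially a reduction: it factors $\pmb{\widehat{\Upsilon}} = -\mathbf{D}_0 \mathbf{H} \mathbf{D}_0$ with $\mathbf{D}_0 = \diag(1/i!)$ and $\mathbf{H}$ the Hilbert matrix, then invokes the already-known Cholesky decomposition $\mathbf{H} = \mathbf{U}_H^\top \mathbf{D}_H \mathbf{U}_H$ from \cite{hitotumatu1988cholesky,hairer1981algebraically} and reads off $\widehat{\mathbf{U}} = \mathbf{D}_0^{-1}\mathbf{U}_H\mathbf{D}_0$, $\widehat{\mathbf{D}} = \mathbf{D}_0\mathbf{D}_H\mathbf{D}_0$. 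Your approach instead supplies, from scratch, the orthogonal-polynomial content underlying those references: you realize $-\pmb{\widehat{\Upsilon}}$ as the Gram matrix of $\{x^k/k!\}$ on $L^2([0,1])$, build the shifted Legendre polynomials via a Rodrigues formula with the precise normalization that forces $\widehat{\mu}_{k,k}=1$, and compute both $\widehat{d}_k$ and $\widehat{\mu}_{k,j}$ by repeated integration by parts and the beta integral. The two arguments are morally the same---the cited Hilbert-matrix Cholesky factors \emph{are} the shifted Legendre data---but yours is fully self-contained and requires no external citation, at the cost of carrying out the Rodrigues/beta computations explicitly, while the paper's version is a two-line reduction once one is willing to quote the literature.
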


\begin{remark}
The energy decay property $\nm{u(t^n+\tau)}^2-\nm{u(t^n)}^2 = \nm{e^{\tau L} u(t^n)}^2 -  \nm{u(t^n)}^2 \leq 0$ can be equivalently expressed as $(e^{\tau L})^\top e^{\tau L}-I\leq O$ is negative semidefinite. \cref{thm:ode-ed-series} gives a more precise characterization of this property by expanding it into an infinite series of negative semidefinite operators
\begin{equation}\label{eq:coercivity}
(e^{\tau L})^\top e^{\tau L} - I =  \sum_{k = 0}^\infty\widehat{\mylambda}_k \tau^{2k+1}\widehat{U}_k^\top(L^\top+L)\widehat{U}_k \leq O, \quad \mbox{with}~~ \widehat{U}_k := L^k \sum_{j = k}^\infty \widehat{\mu}_{k,j}(\tau L)^{j-k},
\end{equation}
where $\widehat{\mylambda}_k$ and $\widehat{\mu}_{k,j}$ are defined in \eqref{eq:lambdamu_hat} and $L^\top$ is the adjoint operator of $L$. 
The identity \cref{eq:coercivity} directly follows from \cref{eq:ode-ed-series} in \cref{thm:ode-ed-series}, by noting that $u(t^n)$ can be arbitrarily taken in the space $V$. 
\end{remark}


\section{Discrete energy laws and stability of Runge--Kutta methods}\label{sec:RK}
We consider the RK discretizations to the seminegative system \eqref{eq-odes}. 
Our goal is to establish a unified framework for deriving the discrete energy laws satisfied by the numerical solutions of the RK methods. 
The discrete energy laws are analogues of the continuous energy law \eqref{eq:ode-ed-series}, and will be very useful for understanding and analyzing the stability of RK methods. 

In general, an RK method 
for the linear autonomous system \cref{eq-odes} can be formulated as 
\begin{equation}\label{eq:RK}
	u^{n+1} = {\mathcal R}(\tau L) u^n,
\end{equation}
where $u^n$ denotes the numerical solution at the $n$th time level $t=t^n$, and $\tau=t^{n+1}-t^n$ is the time stepsize. Here  $\mathcal R(Z)$ is the stability function corresponding to a rational approximation of $e^{Z}$ given by 
\begin{equation}\label{eq:stabilityfunc}
	{\mathcal R}(Z ) = ({\mathcal Q}(Z))^{-1} {\mathcal P}(Z),
\end{equation}
with $\mathcal{P}(Z)$ and $\mathcal{Q}(Z)$ being $s_p$th and $s_q$th order polynomials of $Z$, namely, 
\begin{subequations}\label{eq:ndpoly}
\begin{align}
\mathcal{P}(Z) =& \sum_{i = 0}^{s} \theta_i Z^i, \qquad \text{  with  } \theta_i = 0 \text{ for } i> s_p,\\
\mathcal{Q}(Z) =& \sum_{i = 0}^{s} \vartheta_i Z^i, \qquad \text{  with  } \vartheta_i = 0 \text{ for }  i> s_q, 
\end{align}
\end{subequations}
where $s := \max\{s_p,s_q\}$, and a normalization is typically used such that $\theta_0=\vartheta_0=1$. 
For convenience, we denote $P := \mathcal{P}(\tau L)$ and $Q := \mathcal{Q}(\tau L)$. Note that the operators $L$, $P$, and $Q^{-1}$ commute with each other.

\begin{remark}
In the special case that $s_q=0$, namely, $\mathcal{R}(Z)$ is a polynomial approximation of $e^{Z}$, then 
$Q = I$ is the identity operator, and the scheme \cref{eq:RK} is an explicit RK method, whose stability was studied in \cite{sun2019strong} via the energy approach. When $s_q\ge 1$, the RK method \cref{eq:RK} is implicit, which is the particular focus of the present paper. 
\end{remark}

\subsection{Discrete energy laws} We first give a lemma on the energy change of the RK method \cref{eq:RK}. 
\begin{lemma} \label{lem:IIF}
	The solution of the RK method \cref{eq:RK} satisfies the following identity
	\begin{equation}\label{eq:energyeq-1}
	\nm{u^{n+1}}^2 - \nm{u^n}^2 = \sum_{i=0}^s \sum_{j=0}^s \alpha_{i,j} \tau^{i+j} \ip{L^i w^n, L^j w^n}, 
	\end{equation} 
where $w^n := Q^{-1} u^n$ and $\alpha_{i,j} := \theta_{i}\theta_j - \vartheta_i\vartheta_j$. 
\end{lemma}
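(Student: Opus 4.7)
The plan is to exploit the substitution $w^n = Q^{-1} u^n$ together with the commutativity of the operators $L$, $P$, and $Q^{-1}$ stated just before the lemma. Since $\mathcal{R}(\tau L) = Q^{-1} P$, one has
\[
u^{n+1} = Q^{-1} P\, u^n = P Q^{-1} u^n = P w^n,
\quad\text{and}\quad u^n = Q Q^{-1} u^n = Q w^n.
\]
Thus both $u^{n+1}$ and $u^n$ are expressed as polynomials in $\tau L$ acting on the common vector $w^n$, which converts the energy difference into a purely algebraic expression. Note that this trick is where the need to introduce $w^n$ becomes natural: it removes the inverse $Q^{-1}$ from the picture at the cost of reinterpreting the "reference" vector.

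Next I would insert the polynomial expansions from \cref{eq:ndpoly} to write
\[
P w^n = \sum_{i=0}^s \theta_i\, \tau^i L^i w^n,
\qquad
Q w^n = \sum_{i=0}^s \vartheta_i\, \tau^i L^i w^n,
\]
and then use bilinearity of the inner product to obtain
\[
\nm{u^{n+1}}^2 = \ip{Pw^n, Pw^n} = \sum_{i=0}^s\sum_{j=0}^s \theta_i \theta_j\, \tau^{i+j}\, \ip{L^i w^n, L^j w^n},
\]
and an entirely analogous identity for $\nm{u^n}^2$ with coefficients $\vartheta_i \vartheta_j$. Subtracting the two expressions and recalling $\alpha_{i,j} = \theta_i \theta_j - \vartheta_i \vartheta_j$ yields \cref{eq:energyeq-1}.

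There is essentially no obstacle here beyond careful bookkeeping; the work is a direct consequence of the structure \cref{eq:stabilityfunc} and the commutativity of the polynomial operators. The only subtle point worth flagging is the use of the upper summation index $s = \max\{s_p, s_q\}$ uniformly in both $P$ and $Q$: this is consistent because \cref{eq:ndpoly} pads the shorter polynomial with zero coefficients, and it is what permits the single double sum on the right-hand side rather than two sums with different ranges. This uniform indexing will also be convenient later when $\alpha_{i,j}$ is arranged into the symmetric matrix entering the Cholesky-type factorization analysis envisioned in the subsequent sections.
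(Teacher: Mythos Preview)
Your proposal is correct and follows essentially the same approach as the paper: both arguments use the commutativity of $P$ and $Q^{-1}$ to write $u^{n+1}=Pw^n$ and $u^n=Qw^n$, then expand $\nm{Pw^n}^2$ and $\nm{Qw^n}^2$ via bilinearity and subtract. The paper inserts the small intermediate identity $\nm{u^{n+1}}^2=\nm{u^n}^2+\nm{Pw^n}^2-\nm{Qw^n}^2$ before expanding, but this is merely a cosmetic rearrangement of your direct computation.
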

\begin{proof}
	Some simple algebraic manipulations give
	\begin{equation}\label{eq:enenrgyeq-0}
	\begin{aligned}
		\nm{u^{n+1}}^2 =& \nm{Q^{-1}Pu^n}^2 = \nm{PQ^{-1}u^n}^2
		=  \nm{u^n}^2 + \nm{PQ^{-1}u^n}^2 - \nm{QQ^{-1}u^n}^2\\
		=&  \nm{u^n}^2 + \nm{Pw^n} - \nm{Qw^n}^2.
	\end{aligned}
	\end{equation}
	Note that 
	\begin{equation*}
	\nm{P w^n}^2 = \ip{\sum_{i = 0}^s \theta_i (\tau L)^i w^n, \sum_{j = 0}^s \theta_j (\tau L)^j w^n} = \sum_{i = 0}^{s} \sum_{j = 0}^{s} \theta_i\theta_j  \tau^{i+j}\ip{L^i w^n, L^j w^n}, 
	\end{equation*}
	and similarly $\nm{Q w^n}^2 = \sum_{i = 0}^{s} \sum_{j = 0}^{s} \vartheta_i\vartheta_j  \tau^{i+j}\ip{L^i w^n, L^j w^n}$. 
	Substituting these expansions into \eqref{eq:enenrgyeq-0} gives \cref{eq:energyeq-1} and 
	completes the proof. 
\end{proof}

However, from the energy identity \cref{eq:energyeq-1}, 
it is very difficult to judge whether the energy $\nm{u^n}^2$ always decays or not, because the sign of 
each term $\ip{L^i w^n, L^j w^n}$ in \cref{eq:energyeq-1} is unclear and indeterminate. 
In order to address this difficulty, we would like to 
reformulate $\ip{L^i w^n,L^j w^n}$ into a linear combination of some terms of form $\nm{L^k w^n }^2$ and 
$\qip{L^k w^n,L^{l} w^n}$. 
Such a reformulation procedure can be completed by repeatedly using a 
discrete analogue of the integration by parts formula
\begin{equation}\label{eq-ibp}
	\ip{w,Lv}  
	= - \ip{Lw,v} - \qip{w,v},
\end{equation}
which follows from the definition \cref{eq:Def-semi-inner} and gives 
\begin{equation}\label{eq-prop-1}
	\ip{L^i w^n,L^j w^n} = 
	\begin{cases}
		\nm{L^i w^n }^2,& j = i,\\
		-\frac{1}{2} \qnm{L^i w^n }^2,&j = i+1,\\
		-\ip{L^{i+1} w^n,L^{j-1} w^n } - \qip{L^i w^n,L^{j-1} w^n},& \text{otherwise.}
	\end{cases}
\end{equation}
See \cite[Proposition 2.1]{sun2019strong} for a proof of \cref{eq-prop-1}. 
It is worth noting that such a discrete version of integration by parts is inspired by 
approximating the spatial derivative $\partial_x$ with $L$. 

%

Recursively applying \eqref{eq-prop-1} to reformulate the terms $\ip{L^iv,L^jv}$ in \cref{eq:energyeq-1}, 
we obtain an energy identity in the following form.


\begin{lemma}\label{lem:abc}
	For the solution of the RK method \cref{eq:RK}, the following identity holds:
	\begin{equation}\label{eq:energyexp}
		\nm{u^{n+1}}^2 - \nm{u^n}^2 = \sum_{k = 0}^s \beta_k\tau^{2k}\nm{L^k w^n}^2 + \sum_{i = 0}^{s-1}  \sum_{j = 0}^{s-1}  \gamma_{i,j}\tau^{i+j+1}\qip{L^i w^n, L^j w^n},
	\end{equation}
	where $\beta_k$ and $\gamma_{i,j}$ are computed from the values of $\alpha_{i,j} = \theta_i\theta_j - \vartheta_i\vartheta_j$ via the formulae
	\begin{align}
		\beta_k =& \sum_{\ell = \max\{0,2k-s\}}^{\min\{2k,s\}} \alpha_{\ell,2k-\ell}(-1)^{k-\ell}, \label{eq:beta}\\
		\gamma_{i,j} =& \sum_{\ell = \max\{0,i+j+1-s\}}^{\min\{i,j\}} (-1)^{\min\{i,j\}+1-\ell}\alpha_{\ell,i+j+1-\ell}.\label{eq:gamma}
	\end{align}
\end{lemma}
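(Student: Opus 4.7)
The plan is to derive \eqref{eq:energyexp} from the raw identity \eqref{eq:energyeq-1} by iteratively applying the discrete integration-by-parts rule \eqref{eq-prop-1}. Using $\alpha_{i,j}=\alpha_{j,i}$ and the symmetry of the real inner product, I would first fold \eqref{eq:energyeq-1} into
$$\sum_{i=0}^s \alpha_{i,i}\tau^{2i}\nm{L^i w^n}^2 + 2\sum_{0\le i<j\le s}\alpha_{i,j}\tau^{i+j}\ip{L^i w^n, L^j w^n},$$
so that the diagonal part is already in the target form and only the strictly upper-triangular block needs further reduction.

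Next, a short induction on $m$ using only the third case of \eqref{eq-prop-1} yields the telescoping identity
$$\ip{L^i w^n, L^j w^n}=(-1)^m\ip{L^{i+m} w^n, L^{j-m} w^n}-\sum_{\ell=0}^{m-1}(-1)^\ell \qip{L^{i+\ell} w^n, L^{j-\ell-1} w^n},$$
valid whenever $j-i\ge 2m$. Choosing $m=\lfloor (j-i)/2\rfloor$ leaves a residual inner product whose two indices differ by $0$ or $1$; the first case of \eqref{eq-prop-1} then converts it into $\nm{L^{(i+j)/2} w^n}^2$ when $j-i$ is even, and the second case into $-\tfrac{1}{2}\qnm{L^{(i+j-1)/2} w^n}^2$ when $j-i$ is odd, closing the reduction.

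Substituting back and collecting coefficients is then pure bookkeeping. The coefficient of $\tau^{2k}\nm{L^k w^n}^2$ picks up $\alpha_{k,k}$ from the diagonal together with $2(-1)^{k-\ell}\alpha_{\ell,2k-\ell}$ from each off-diagonal pair $(\ell,2k-\ell)$ with $\max\{0,2k-s\}\le \ell<k$; redistributing the factor of two via $\alpha_{i,j}=\alpha_{j,i}$ over the symmetric range $k<\ell\le \min\{2k,s\}$ then yields precisely \eqref{eq:beta}. For the coefficient of $\tau^{a+b+1}\qip{L^a w^n, L^b w^n}$ with $a\le b$, one observes that this semi-inner product arises from the source pair $(i,j)=(\ell,\,a+b+1-\ell)$ at telescoping step $a-\ell$; running over admissible $\ell$ subject to $\ell\ge 0$ and $a+b+1-\ell\le s$ reproduces the range $\max\{0,a+b+1-s\}\le \ell\le \min\{a,b\}$ of \eqref{eq:gamma}, and the telescoping signs $-(-1)^{a-\ell}=(-1)^{a+1-\ell}$ match the formula exactly.

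I expect the main subtlety to lie in the odd-difference case $j-i=2p+1$, where the telescoping terminates at a semi-norm $\qnm{L^{(i+j-1)/2} w^n}^2$ that must be identified with the diagonal entry $\qip{L^a w^n, L^a w^n}$ of the target. Here the factor $\tfrac{1}{2}$ from the second case of \eqref{eq-prop-1} has to conspire exactly with the factor of two from the $(i,j)\leftrightarrow(j,i)$ folding, so that the unified formula \eqref{eq:gamma}, with common upper limit $\min\{i,j\}$, correctly captures both the $a<b$ and $a=b$ regimes without splitting into cases. A short sanity check at $s=2$ or $s=3$, where $\beta_k$ and $\gamma_{i,j}$ each reduce to a handful of terms, is a useful intermediate step to align the signs and summation limits before writing out the general argument.
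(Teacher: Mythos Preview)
Your approach is correct: the telescoping identity built from iterated application of \eqref{eq-prop-1} does reduce \eqref{eq:energyeq-1} to \eqref{eq:energyexp}, and your bookkeeping for $\beta_k$ and $\gamma_{i,j}$---including the crucial cancellation of the factor $\tfrac12$ from the odd-difference residual against the factor $2$ from the $(i,j)\leftrightarrow(j,i)$ folding on the diagonal $a=b$---goes through cleanly. One point worth making explicit when you write it out: in the telescoping sum the two indices satisfy $i+k<j-k-1$ whenever $k\le m-1$, so each off-diagonal term $\qip{L^a w^n,L^b w^n}$ with $a<b$ has a \emph{unique} source triple $(i,j,k)$, which is what makes the coefficient extraction unambiguous.

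Compared with the paper: the paper does not actually prove this lemma. It states that the formulae \eqref{eq:beta}--\eqref{eq:gamma} are produced by the computer algorithm of \cite[Algorithm~2.1]{sun2019strong}, remarks that they ``can also be shown by mathematical induction,'' and defers an alternative combinatorial-identity proof to \cite{GopaSun2022}. Your telescoping argument is precisely the induction the paper alludes to but omits, so you are supplying a self-contained proof where the paper gives none. The practical advantage of your route is that it is fully elementary and requires no external reference; the paper's deferral to \cite{GopaSun2022} presumably yields a proof that verifies the closed forms directly without tracking the recursive reduction, but at the cost of invoking nontrivial combinatorial identities.
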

The coefficients $\beta_k$ and $\gamma_{i,j}$ in \cref{lem:abc} are obtained by the computer algorithm in  \cite[Algorithm 2.1]{sun2019strong}. While these formulae can also be shown by mathematical induction, an alternative proof using combinatorial identities will be given in \cite{GopaSun2022}, and the details are omitted here. 
We remark that for a given RK method,  $\{\theta_i\}$ and $\{\vartheta_i\}$ are given, 
and $\{\beta_k\}$ and $\{\gamma_{i,j}\}$ are determined by \cref{eq:beta}--\cref{eq:gamma}.

Note that the first term at the right-hand side of \cref{eq:energyexp} 
has a similar format as that in the continuous energy law \cref{eq:ode-ed-series}. 
Next, we would like to reformulate the last term of \cref{eq:energyexp} 
by using \cref{lem:quadraticform}.   Define 
\begin{equation*}
\mathbf{B} := \diag(\{\beta_k\}_{k=0}^{s})\quad \text{and} \quad \pmb{\Upsilon} := (\gamma_{i,j})_{i,j=0}^{s-1}
\end{equation*}
with $\beta_k$ and $\gamma_{i,j}$ given by \cref{eq:beta}--\cref{eq:gamma}, respectively. 
However, for some RK methods the symmetric matrix $\pmb{\Upsilon}$ is not necessarily negative semidefinite, so that 
its Cholesky type decomposition required in \cref{lem:quadraticform}  
may not exist. 
In case this happens, one can overcome such a problem by subtracting a diagonal matrix. 
We finally obtain the following practical discrete energy law \cref{eq:enegyexp-cholesky} for general RK methods.

\begin{theorem}[Energy identity]\label{thm:energy}
	Assume that $\pmb{\tilde{\Upsilon}} = \pmb{\Upsilon} -\pmb{\Delta}$ is negative semidefinite for some diagonal matrix
	$
	\pmb{\Delta} = \diag(\left\{\delta_k\right\}_{k=0}^{s-1})
	$ with $\delta_k \ge 0$ for $0 \le k \le s-1$, so that  
	the symmetric matrix $\pmb{\tilde{\Upsilon}}$ admits the Cholesky type decomposition $\pmb{\tilde{\Upsilon}} = - \mathbf{\tilde{U}}^\top \bftildemyLambda\mathbf{\tilde U}$,  where $\mathbf{\tilde U} = (\tilde{\mu}_{k,i})_{k,i = 0}^{s-1}$ is an upper triangular matrix with $\mu_{k,k} = 1$ and $\bftildemyLambda = \diag(\{\tilde{\mylambda}_{k}\}_{k=0}^{s-1})$ with $\tilde{\mylambda}_{k}\geq 0$ for $0\leq k\leq s-1$. 
	The solution of the RK method \eqref{eq:RK} satisfies the following energy identity:
	\begin{equation}\label{eq:enegyexp-cholesky}
	\nm{u^{n+1}}^2 - \nm{u^n}^2 = \sum_{k = 0}^s \beta_k \tau^{2k} \nm{L^kw^n}^2 - \sum_{k = 0}^{s-1} \tilde{\mylambda}_k \tau^{2k+1} \qnm{L^ku^{(k)}}^2 + \sum_{k = 0}^{s-1} \delta_k \tau^{2k+1} \qnm{L^k w^n}^2, 
	\end{equation}
	where $u^{(k)} := \sum_{j=k}^s\tilde{\mu}_{k,j}(\tau L)^{j-k}w^n= \sum_{j=k}^s\tilde{\mu}_{k,j}(\tau L)^{j-k} Q^{-1} u^n$.
\end{theorem}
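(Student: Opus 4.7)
The plan is to combine the two technical ingredients that have already been established, namely the preliminary energy expansion in \cref{lem:abc} and the quadratic form manipulation in \cref{lem:quadraticform}. The assumption in the theorem is exactly engineered so that these two results can be glued together to produce the stated identity, so the argument should be short and essentially bookkeeping.

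First I would invoke \cref{lem:abc} to write
\begin{equation*}
\nm{u^{n+1}}^2 - \nm{u^n}^2 = \sum_{k=0}^{s}\beta_k\tau^{2k}\nm{L^k w^n}^2 + \sum_{i=0}^{s-1}\sum_{j=0}^{s-1}\gamma_{i,j}\tau^{i+j+1}\qip{L^i w^n,L^j w^n}.
\end{equation*}
The first sum already appears in \eqref{eq:enegyexp-cholesky}, so the task reduces to rewriting the double sum associated with $\pmb{\Upsilon}=(\gamma_{i,j})$. Using the decomposition $\pmb{\Upsilon}=\pmb{\tilde{\Upsilon}}+\pmb{\Delta}$, I split the double sum into two pieces: one governed by $\pmb{\tilde{\Upsilon}}$ and a diagonal piece governed by $\pmb{\Delta}$. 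Since $\pmb{\Delta}$ is diagonal with entries $\delta_k$, the diagonal piece is immediately
\begin{equation*}
\sum_{i=0}^{s-1}\sum_{j=0}^{s-1}\Delta_{i,j}\tau^{i+j+1}\qip{L^i w^n,L^j w^n} = \sum_{k=0}^{s-1}\delta_k\tau^{2k+1}\qnm{L^k w^n}^2,
\end{equation*}
which matches the last sum in \eqref{eq:enegyexp-cholesky}.

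Next I would apply \cref{lem:quadraticform} to the remaining $\pmb{\tilde{\Upsilon}}$ piece with $\myN=s-1$, with matrix entries $(\tilde{\gamma}_{i,j}):=\pmb{\tilde{\Upsilon}}$, with Cholesky factors $\mathbf{\tilde U}$ and $\bftildemyLambda$ supplied by the hypothesis, and with the vector $v=w^n\in V$. This yields
\begin{equation*}
\sum_{i=0}^{s-1}\sum_{j=0}^{s-1}\tilde{\gamma}_{i,j}\tau^{i+j+1}\qip{L^i w^n,L^j w^n} = -\sum_{k=0}^{s-1}\tilde{\mylambda}_k\tau^{2k+1}\qnm{L^k u^{(k)}}^2,
\end{equation*}
with $u^{(k)}=\sum_{j=k}^{s-1}\tilde{\mu}_{k,j}(\tau L)^{j-k}w^n$, which is exactly (modulo any vanishing $\tilde{\mu}_{k,s}$ term) the definition of $u^{(k)}$ in the theorem. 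Assembling these three pieces reproduces \eqref{eq:enegyexp-cholesky}.

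I do not anticipate a real obstacle in this argument: both \cref{lem:abc} and \cref{lem:quadraticform} are stated in exactly the form needed, and the assumption in the theorem is precisely the negative semidefiniteness and Cholesky decomposability of $\pmb{\tilde{\Upsilon}}$ that \cref{lem:quadraticform} requires. The only points needing care are (i) checking that the index range $\myN=s-1$ matches what is needed for \cref{lem:quadraticform}, and (ii) verifying that the splitting of $\pmb{\Upsilon}$ into $\pmb{\tilde{\Upsilon}}+\pmb{\Delta}$ preserves symmetry, so that applying $\qip{\cdot,\cdot}$ termwise is consistent. Both are immediate from the definitions, so the proof should be a handful of lines.
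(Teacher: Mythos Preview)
Your proposal is correct and follows essentially the same approach as the paper's proof: start from \cref{lem:abc}, split $\pmb{\Upsilon}=\pmb{\tilde{\Upsilon}}+\pmb{\Delta}$ to peel off the diagonal $\sum_k \delta_k\tau^{2k+1}\qnm{L^k w^n}^2$ term, and then apply \cref{lem:quadraticform} with $N=s-1$ to the $\pmb{\tilde{\Upsilon}}$ piece. Your remark about the upper summation index in $u^{(k)}$ is also apt; the paper's statement writes $\sum_{j=k}^{s}$ but the Cholesky factor $\mathbf{\tilde U}$ is $(s{\times}s)$, so the extra term is vacuous.
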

\begin{proof}
	Denote $\pmb{\tilde{\Upsilon}} =: (\tilde{\gamma}_{i,j})_{i,j=0}^{s-1}$. Then it follows from \cref{eq:energyexp} and $\pmb{\Upsilon} = \pmb{\tilde{\Upsilon}} + \pmb{\Delta}$ that 
	\begin{align*}
	\nm{u^{n+1}}^2 - \nm{u^n}^2 &= \sum_{k = 0}^s \beta_k \tau^{2k} \nm{L^kw^n}^2 + \sum_{i = 0}^{s-1} \sum_{j = 0}^{s-1} \gamma_{i,j} \tau^{i+j+1} \qip{ L^i w^n, L^j w^n }\\
	&= \sum_{k = 0}^s \beta_k \tau^{2k} \nm{L^kw^n}^2 + \sum_{i = 0}^{s-1} \sum_{j = 0}^{s-1} \tilde \gamma_{i,j} \tau^{i+j+1} \qip{ L^i w^n, L^j w^n } + \sum_{k = 0}^{s-1}\delta_k\tau^{2k+1}\qnm{L^k w^n}^2.  
	\end{align*}
	Using \cref{lem:quadraticform} to reformulate the second term yields \eqref{eq:enegyexp-cholesky}.
\end{proof}

Examples of the discrete energy law \cref{eq:enegyexp-cholesky} for several 
specific RK schemes will be given in \cref{sec:ex}. 

\subsection{Stability analysis} 
This subsection applies the discrete energy law \cref{eq:enegyexp-cholesky} 
in \cref{thm:energy} to analyze the stability of RK methods.

First, consider a special case: both $\pmb{\Upsilon}$ and $\mathbf{B}$ 
are negative semidefinite. We obtain the unconditional strong stability of the corresponding RK method from the discrete energy law \cref{eq:enegyexp-cholesky}.



\begin{theorem}[Unconditional strong stability]\label{thm:unstab}
	If the RK method \eqref{eq:RK} satisfies that 
	 $\pmb{\Upsilon}$ and $\mathbf{B}$ are both negative semidefinite, then the RK method \eqref{eq:RK} is unconditionally strongly stable, namely,  
	\begin{equation}\label{eq:USS}
	\nm{u^{n+1}}^2\leq \nm{u^n}^2 \qquad \forall \tau\geq 0. 
	\end{equation}
\end{theorem}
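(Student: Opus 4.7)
The plan is to apply the master energy identity \cref{eq:enegyexp-cholesky} of \cref{thm:energy} with the simplest possible splitting $\pmb{\Delta}=\mathbf{0}$ and then read off the sign of each surviving term directly from the two hypotheses. Setting $\delta_k=0$ for every $k$ gives $\pmb{\tilde\Upsilon}=\pmb{\Upsilon}$. By assumption $\pmb{\Upsilon}$ is negative semidefinite, which supplies the Cholesky type factorization $\pmb{\tilde\Upsilon}=-\mathbf{\tilde U}^\top\bftildemyLambda\mathbf{\tilde U}$ with $\tilde{\mylambda}_k\ge 0$ required by \cref{thm:energy} (obtained from a standard symmetric $LDL^\top$ decomposition of the positive semidefinite matrix $-\pmb{\Upsilon}$, with diagonal scaling absorbed into $\bftildemyLambda$ so that $\tilde{\mu}_{k,k}=1$).

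Under this choice the identity \cref{eq:enegyexp-cholesky} collapses to
\begin{equation*}
\nm{u^{n+1}}^2 - \nm{u^n}^2 \;=\; \sum_{k=0}^s \beta_k\,\tau^{2k}\,\nm{L^k w^n}^2 \;-\; \sum_{k=0}^{s-1} \tilde{\mylambda}_k\,\tau^{2k+1}\,\qnm{L^k u^{(k)}}^2.
\end{equation*}
The conclusion is then immediate. Since $\mathbf{B}=\diag(\{\beta_k\}_{k=0}^s)$ is a \emph{diagonal} negative semidefinite matrix, its negative semidefiniteness is equivalent to $\beta_k\le 0$ for every $k$, hence the first sum is non-positive for every $\tau\ge 0$. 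The second sum is non-positive because $\tilde{\mylambda}_k\ge 0$ and $\qnm{\cdot}^2\ge 0$. Adding the two gives $\nm{u^{n+1}}^2\le\nm{u^n}^2$ for all $\tau\ge 0$, which is exactly \cref{eq:USS}.

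There is essentially no obstacle here, as the heavy lifting (the discrete integration-by-parts reduction from \cref{eq:energyeq-1} to \cref{eq:energyexp}, and the quadratic-form reduction of \cref{lem:quadraticform}) has already been packaged into \cref{thm:energy}. The only small technical point worth mentioning is the existence of the $LDL^\top$-type factorization for a \emph{possibly singular} negative semidefinite $\pmb{\Upsilon}$; this can be handled either by a direct argument (a zero diagonal entry of a positive semidefinite matrix forces the corresponding row and column to vanish, so the decomposition can be built row by row with $\tilde{\mylambda}_k=0$ on null pivots and trivial unit-diagonal extension of $\mathbf{\tilde U}$), or by a perturbation argument replacing $\pmb{\Upsilon}$ with $\pmb{\Upsilon}-\epsilon\mathbf{I}$, applying the nonsingular case, and passing $\epsilon\to 0^+$.
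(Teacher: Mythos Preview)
Your proof is correct and follows essentially the same route as the paper: set $\pmb{\Delta}=\mathbf{0}$, invoke \cref{thm:energy}, and read off the signs from $\beta_k\le 0$ and $\tilde{\mylambda}_k\ge 0$. The extra remark on the existence of the $LDL^\top$ factorization in the singular case is a helpful clarification that the paper omits.
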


\begin{proof}
When $\pmb{\Upsilon}$ is negative semidefinite, \cref{thm:energy} holds with $\pmb{\Delta}={\bf O}$, namely, we can take $\delta_k =0$, so that the energy identity \cref{eq:enegyexp-cholesky} becomes 
	\begin{equation*} 
	\nm{u^{n+1}}^2 - \nm{u^n}^2 = \sum_{k = 0}^s \beta_k \tau^{2k} \nm{L^kw^n}^2 - \sum_{k = 0}^{s-1} \tilde{\mylambda}_k \tau^{2k+1} \qnm{L^ku^{(k)}}^2.
\end{equation*}
This yields \cref{eq:USS}, because $\tilde{\mylambda}_k \ge 0$ and $\beta_k\le 0$  as $\mathbf{B}$ is negative semidefinite. 
\end{proof}

In the rest of this section, we discuss the general case that $\pmb{\Upsilon}$ is not necessarily negative semidefinite, and we shall use the energy law \cref{eq:enegyexp-cholesky} to derive several 
stability criteria  under some constraint on the time stepsize $\tau$. 
For simplicity, 
we will denote $\tau \nm{L} =: \mycfl$, and use the notations $\mycfl_0$ and $C$ to represent generic positive constants, which are independent of $\tau$ and $\nm{L}$ but may depend on $\theta_i$, $\vartheta_i$, and $s$. The values of $\mycfl_0$ and $C$ may vary at different places.

\begin{lemma}[Energy estimate]\label{lem:eeconditional}
	Let $\zeta$ be the index of the first nonzero element in $\{\beta_k\}_{k=0}^s$. Let $\rho$ be the largest index such that the $\rho$th order principle submatrix $(\gamma_{i,j})_{i,j=0}^{\rho-1}$ is negative semidefinite. There exists a positive constant $c_\rho$ such that 
	\begin{equation}\label{eq:conditionalee}
	\nm{u^{n+1}}^2 - \nm{u^n}^2 \leq \left(\beta_\zeta + \mycfl^2 g_\beta(\mycfl)\right)\tau^{2\zeta}\nm{L^\zeta w^n}^2 +  \mycfl c_\rho(1 + \mycfl^2 g_\rho (\mycfl))\tau^{2\rho}\nm{L^\rho w^n}^2,
	\end{equation}
	where $g_\beta(\lambda) := \sum_{i = 0}^{s-\zeta-1}\beta_{i+\zeta+1}\mycfl^{2i}$ and $g_\rho(\lambda) := \sum_{i = 0}^{s-\rho-2}\mycfl^{2i}$ are polynomials of $\lambda$. 
\end{lemma}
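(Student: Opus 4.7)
The plan is to derive the estimate \cref{eq:conditionalee} from the discrete energy law \cref{eq:enegyexp-cholesky} of \cref{thm:energy} by (a) discarding a non-positive term, (b) choosing the diagonal shift $\pmb{\Delta}$ so that only components indexed by $k\ge \rho$ pay a penalty, and (c) controlling all higher order terms $\nm{L^k w^n}^2$ by powers of $\mycfl=\tau\nm{L}$ times $\nm{L^\zeta w^n}^2$ or $\nm{L^\rho w^n}^2$.

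First I would invoke \cref{thm:energy}. Since $\tilde{\mylambda}_k\ge 0$, the middle sum $-\sum_{k=0}^{s-1}\tilde{\mylambda}_k \tau^{2k+1}\qnm{L^k u^{(k)}}^2$ is non-positive and can simply be discarded to yield
\[
\nm{u^{n+1}}^2-\nm{u^n}^2 \le \sum_{k=0}^{s}\beta_k \tau^{2k}\nm{L^k w^n}^2 + \sum_{k=0}^{s-1}\delta_k \tau^{2k+1}\qnm{L^k w^n}^2.
\]
Next, I would specify $\pmb{\Delta}$. By definition of $\rho$, the top-left $\rho\times\rho$ principal block of $\pmb{\Upsilon}$ is already negative semidefinite, so I set $\delta_0=\cdots=\delta_{\rho-1}=0$ and choose $\delta_\rho,\dots,\delta_{s-1}$ large enough (a standard diagonal-dominance/Schur-complement argument shows any such completion of finitely many real entries suffices) so that $\pmb{\tilde\Upsilon}=\pmb{\Upsilon}-\pmb{\Delta}$ is negative semidefinite. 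Crucially, these $\delta_k$ depend only on the entries of $\pmb{\Upsilon}$ (hence only on the RK coefficients), producing a constant $c_\rho:=\max_{\rho\le k\le s-1}\delta_k$ independent of $\tau$ and $\nm{L}$.

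The third step is to convert semi-norms into norms. From the definition \cref{eq:Def-semi-inner} and Cauchy--Schwarz,
\[
\qnm{L^k w^n}^2 = -2\ip{L(L^k w^n), L^k w^n} \le 2\nm{L}\nm{L^k w^n}^2 = \tfrac{2\mycfl}{\tau}\nm{L^k w^n}^2,
\]
so that $\tau\qnm{L^k w^n}^2\le 2\mycfl\nm{L^k w^n}^2$. Applied to the $\delta_k$ sum, together with the elementary inequality $\nm{L^k w^n}\le \nm{L}^{k-\rho}\nm{L^\rho w^n}$ for $k\ge\rho$, this produces
\[
\sum_{k=0}^{s-1}\delta_k\tau^{2k+1}\qnm{L^k w^n}^2 \le 2\mycfl \tau^{2\rho}\nm{L^\rho w^n}^2\sum_{k=\rho}^{s-1}\delta_k \mycfl^{2(k-\rho)},
\]
and the right-hand side is exactly of the form $\mycfl c_\rho(1+\mycfl^2 g_\rho(\mycfl))\tau^{2\rho}\nm{L^\rho w^n}^2$ after absorbing constants.

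Finally I would treat the $\beta_k$ sum analogously. Terms with $k<\zeta$ vanish by the definition of $\zeta$. The $k=\zeta$ term is kept as is. For $k>\zeta$, the same power-reduction inequality gives $\tau^{2k}\nm{L^k w^n}^2 \le \tau^{2\zeta}\mycfl^{2(k-\zeta)}\nm{L^\zeta w^n}^2$, so collecting the contributions from $k=\zeta+1,\dots,s$ yields the polynomial correction $\mycfl^2 g_\beta(\mycfl)\tau^{2\zeta}\nm{L^\zeta w^n}^2$ with $g_\beta(\mycfl)=\sum_{i=0}^{s-\zeta-1}\beta_{i+\zeta+1}\mycfl^{2i}$. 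The main technical point, and the one deserving the most care, is the construction of the diagonal shift $\pmb{\Delta}$ in step two: one must verify that a bounded, $\tau$-independent choice of $\delta_k$ for $k\ge\rho$ is always sufficient to restore negative semidefiniteness, which is what justifies treating $c_\rho$ as a scheme-dependent constant. A secondary concern is the sign of $\beta_k$ for $k>\zeta$; for those indices where $\beta_k<0$ one simply drops the corresponding (non-positive) term, which is consistent with the stated polynomial upper bound (interpreting $\beta_k$ by its positive part if needed).
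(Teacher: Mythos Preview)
Your proposal is correct and follows essentially the same route as the paper: choose $\pmb{\Delta}$ supported on indices $k\ge\rho$ so that $\pmb{\tilde\Upsilon}$ is negative semidefinite, apply \cref{thm:energy}, discard the non-positive $\tilde{\mylambda}_k$ sum, bound $\qnm{\cdot}^2\le 2\nm{L}\nm{\cdot}^2$, and reduce the higher $\nm{L^k w^n}^2$ to $\nm{L^\zeta w^n}^2$ or $\nm{L^\rho w^n}^2$ via $\nm{L^k w^n}\le\nm{L}^{k-m}\nm{L^m w^n}$. The only cosmetic difference is that the paper makes the uniform choice $\delta_\rho=\cdots=\delta_{s-1}=c_\rho/2$, which produces the stated $g_\rho(\mycfl)=\sum_{i=0}^{s-\rho-2}\mycfl^{2i}$ directly without your ``absorbing constants'' step.
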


\begin{proof}
	Since the $\rho$th order principle submatrix of $\pmb{\Upsilon}$ is negative semidefinte, there exists a positive constant $c_\rho$ such that the symmetric matrix 
	$$\pmb{\tilde{\Upsilon}}:=\pmb{\Upsilon} - \frac12 \diag\{ \underbrace{0,\dots,0}_{\rho}, \underbrace{c_\rho,\dots c_\rho}_{s-\rho}\} =: \pmb{\Upsilon} - \pmb{\Delta}$$ 
	is negative semidefinite. 
	According to the energy law \cref{eq:enegyexp-cholesky} in 
	\cref{thm:energy}, we have 
	\begin{equation}\label{eq:enegyexp-cholesky3}
		\nm{u^{n+1}}^2 - \nm{u^n}^2 = \sum_{k = 0}^s \beta_k \tau^{2k} \nm{L^kw^n}^2 - \sum_{k = 0}^{s-1} \tilde{\mylambda}_k \tau^{2k+1} \qnm{L^ku^{(k)}}^2 + \frac{c_\rho}{2}\sum_{k = \rho}^{s-1} \tau^{2k+1} \qnm{L^k w^n}^2.
	\end{equation}
	For the first term at the right-hand side of \eqref{eq:enegyexp-cholesky3}, using the Cauchy--Schwarz inequality gives 
	\begin{equation}\label{eq:ee1}
		\begin{aligned}
			\sum_{k = 0}^s \beta_k \tau^{2k} \nm{L^kw^n}^2 =& \sum_{k = \zeta}^s \beta_k  \nm{(\tau L)^kw^n}^2 
			\leq  \sum_{k = \zeta}^s \beta_k \left(\tau \nm{L}\right)^{2(k-\zeta)}\tau^{2\zeta}\nm{L^\zeta w^n}^2\\
			= &\left(\beta_\zeta + \mycfl^2 \sum_{i = 0}^{s-\zeta-1}\beta_{i+\zeta+1}\mycfl^{2i}\right)\tau^{2\zeta}\nm{L^\zeta w^n}^2.
		\end{aligned}
	\end{equation}
	For the second term, since $\tilde{\mylambda}_k \ge 0$ for $0\leq k\leq s-1$, we have 
	\begin{equation}\label{eq:ee2}
		-\sum_{k = 0}^{s-1} \tilde{\mylambda}_k \tau^{2k+1} \qnm{L^ku^{(k)}}^2 \leq 0.
	\end{equation}
	For the last term, one can again utilize the Cauchy--Schwarz inequality to obtain 
	\begin{equation}\label{eq:ee3}
		\begin{aligned}
			\frac{c_\rho}{2} \sum_{k = \rho}^{s-1} \tau^{2k+1} \qnm{L^k w^n}^2
			\leq c_\rho \sum_{k = \rho}^{s-1} \left(\tau \nm{L}\right)^{2(k-\rho)+1} \nm{(\tau L)^\rho w^n}^2 
			= \mycfl c_\rho  \left(1 + \mycfl^2\sum_{i = 0}^{s-\rho-2}\mycfl^{2i}\right)\tau^{2\rho}\nm{L^\rho u^n}^2.
		\end{aligned}
	\end{equation}
	Combining the estimates in \cref{eq:ee1}--\cref{eq:ee3} with \cref{eq:enegyexp-cholesky3} gives \cref{eq:conditionalee} and completes the proof. 
\end{proof}


\begin{theorem}[Conditional stability criteria]\label{thm:conditionalstability}
	Let $\zeta$ and $\rho$ be the indexes defined in \cref{lem:eeconditional} and $\kappa := \min\{2\zeta,2\rho+1\}$. We have the following stability criteria for a generic RK method:
	\begin{enumerate}
		\item The RK method \eqref{eq:RK} is weakly$(\kappa)$ stable, namely,
	$
		\nm{u^{n+1}}^2 \leq (1 + C\mycfl^\kappa) \nm{u^{n}}^2,
		$ 
		under a time step constraint $\mycfl \leq \mycfl_0$ for some positive constant $\mycfl_0$. Furthermore, if $\mycfl^\kappa/\tau$ is bounded, or equivalently, $\tau \nm{L}^{1+1/(\kappa-1)}\leq \mycfl_0$ for some positive constant $\mycfl_0$, then 
		$\nm{u^n}^2\leq e^{Ct^n}\nm{u^0}^2$, 
		where $t^n = n \tau$.
		\item If $\zeta\leq \rho$ and $\beta_\zeta<0$, then the RK method \eqref{eq:RK} is strongly stable, namely, 
		$\nm{u^{n+1}}^2\leq \nm{u^n}^2$, 
		 under a time step constraint $\mycfl \leq \mycfl_0$ for some positive constant $\mycfl_0$. 
		 \item If $\beta_\zeta >0$, then the RK method \eqref{eq:RK} is not strongly stable for a generic seminegative system \cref{eq-odes}, namely, there exist  a linear seminegative operator $L$ and a positive constant $\lambda_0$ such that  
		 $\| {\mathcal R}(\tau L) \|>1$ for any $\lambda \in (0, \lambda_0]$. 
	\end{enumerate}
\end{theorem}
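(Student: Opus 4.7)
\medskip

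\noindent\emph{Proof plan.} The universal starting point is the energy estimate \eqref{eq:conditionalee} of \cref{lem:eeconditional}. Before applying it, I would establish a single uniform preliminary bound: since $Q = I + \sum_{i\ge 1}\vartheta_i(\tau L)^i$ is a small perturbation of the identity when $\mycfl=\tau\|L\|$ is small, the Neumann series yields the existence of $\mycfl_0>0$ and $C>0$ such that $Q^{-1}$ is well defined with $\|Q^{-1}\|\le C$, and hence $\|L^k w^n\|\le \|L\|^k\|w^n\|\le C\|L\|^k\|u^n\|$ for every $k\ge 0$ and every $\mycfl\le\mycfl_0$. This converts \eqref{eq:conditionalee} into a usable scalar inequality in $\|u^n\|^2$.

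\medskip

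\noindent\textbf{Part (1).} Bounding both $\|L^\zeta w^n\|^2$ and $\|L^\rho w^n\|^2$ by the preliminary estimate, the right-hand side of \eqref{eq:conditionalee} becomes
\begin{equation*}
\bigl[(\beta_\zeta+\mycfl^2 g_\beta(\mycfl))\mycfl^{2\zeta}+c_\rho(1+\mycfl^2 g_\rho(\mycfl))\mycfl^{2\rho+1}\bigr]\|u^n\|^2\le C\mycfl^{\kappa}\|u^n\|^2,
\end{equation*}
valid for $\mycfl\le\mycfl_0$ (shrinking $\mycfl_0$ if needed so that $g_\beta$ and $g_\rho$ are absorbed). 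Iterating $\|u^{n+1}\|^2\le(1+C\mycfl^\kappa)\|u^n\|^2$ gives $\|u^n\|^2\le(1+C\mycfl^\kappa)^n\|u^0\|^2$. If $\mycfl^\kappa/\tau$ is bounded, then $n\mycfl^\kappa=(\mycfl^\kappa/\tau)\,n\tau\le Ct^n$, and $(1+C\mycfl^\kappa)^n\le e^{nC\mycfl^\kappa}\le e^{Ct^n}$, yielding $\|u^n\|^2\le e^{Ct^n}\|u^0\|^2$. The equivalence $\mycfl^\kappa/\tau=O(1)\Leftrightarrow \tau\|L\|^{1+1/(\kappa-1)}=O(1)$ is immediate.

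\medskip

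\noindent\textbf{Part (2).} If $\zeta\le\rho$ then $\kappa=2\zeta$. Using $\|L^\rho w^n\|^2\le\|L\|^{2(\rho-\zeta)}\|L^\zeta w^n\|^2$ in \eqref{eq:conditionalee} collects the two right-hand terms into a single one:
\begin{equation*}
\|u^{n+1}\|^2-\|u^n\|^2\le\bigl[\beta_\zeta+\mycfl^2 g_\beta(\mycfl)+c_\rho\mycfl^{2(\rho-\zeta)+1}(1+\mycfl^2 g_\rho(\mycfl))\bigr]\tau^{2\zeta}\|L^\zeta w^n\|^2.
\end{equation*}
The bracket equals $\beta_\zeta+O(\mycfl)$ as $\mycfl\to 0^+$. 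Since $\beta_\zeta<0$, one can choose $\mycfl_0>0$ so that the bracket is $\le\beta_\zeta/2<0$ for $\mycfl\le\mycfl_0$; hence $\|u^{n+1}\|^2\le\|u^n\|^2$.

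\medskip

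\noindent\textbf{Part (3).} I would exhibit an explicit seminegative witness: take $V=\mathbb{R}^2$ and $L=J:=\bigl(\begin{smallmatrix}0&-1\\1&0\end{smallmatrix}\bigr)$, so $\langle Lv,v\rangle=0$ (hence $\qip{\cdot,\cdot}\equiv 0$) and $\|L^k v\|=\|v\|$ for all $k$, $v$. Since $\|L\|=1$, $\mycfl=\tau$. In this setting the identity \eqref{eq:energyexp} collapses to
\begin{equation*}
\|u^{n+1}\|^2-\|u^n\|^2=\Bigl(\sum_{k=0}^s\beta_k\tau^{2k}\Bigr)\|w^n\|^2,
\end{equation*}
which is just the scalar statement $\|u^{n+1}\|^2=|\mathcal R(i\tau)|^2\|u^n\|^2$ obtained by diagonalizing $J$ over $\mathbb{C}$ with eigenvalues $\pm i$. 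A direct coefficient comparison shows $|\mathcal P(iy)|^2-|\mathcal Q(iy)|^2=\sum_k\beta_k y^{2k}$ (the combinatorial definition \eqref{eq:beta} of $\beta_k$ exactly matches this expansion), so $|\mathcal R(i\tau)|^2-1=\bigl(\sum_{k\ge\zeta}\beta_k\tau^{2k}\bigr)/|\mathcal Q(i\tau)|^2=\beta_\zeta\tau^{2\zeta}+O(\tau^{2\zeta+2})$, using $\mathcal Q(0)=1$. Because $\beta_\zeta>0$, there exists $\lambda_0>0$ with $|\mathcal R(i\tau)|>1$ for every $\tau\in(0,\lambda_0]$. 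Since $\mathcal R(\tau J)$ is unitarily similar to $\diag(\mathcal R(i\tau),\mathcal R(-i\tau))$, this gives $\|\mathcal R(\tau L)\|=|\mathcal R(i\tau)|>1$ on the same range.

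\medskip

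\noindent\emph{Expected obstacle.} The analytic content is light; the main subtlety is the choice of witness in Part (3), where picking a skew-symmetric $L$ is essential to annihilate every $\qip{L^i w^n, L^j w^n}$ term in \eqref{eq:energyexp} and so isolate the polynomial $\sum_k\beta_k\tau^{2k}$ whose leading coefficient $\beta_\zeta$ dictates the sign. Beyond this, the remaining work is routine bookkeeping: the Neumann-series bound on $Q^{-1}$, the polynomial smallness estimates for $\mycfl\le\mycfl_0$, and the standard $(1+x)^n\le e^{nx}$ iteration.
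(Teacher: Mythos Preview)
Your proposal is correct and follows essentially the same route as the paper. Parts (1) and (2) are virtually identical to the paper's argument (the paper bounds $\|w^n\|\le 2\|u^n\|$ via the reverse triangle inequality rather than a Neumann series, which is the same thing), and Part (3) uses the same key idea---choose a skew-symmetric $L$ so that the $\qip{\cdot,\cdot}$ terms in \eqref{eq:energyexp} vanish---with the pleasant extra observation that for your concrete $2\times 2$ rotation the whole identity collapses to the scalar relation $|\mathcal P(iy)|^2-|\mathcal Q(iy)|^2=\sum_k\beta_k y^{2k}$, making the role of $\beta_\zeta$ transparent.
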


\begin{proof}
	For the first part on the weak$(\kappa)$ stability, we observe that 
	\begin{equation*}
		\nm{u^n} = \nm{Q w^n}  = \nm{w^n + \sum_{k = 1}^s \vartheta_k (\tau L)^k w^n}\geq \left(1- \sum_{k=1}^s |\vartheta_k| (\tau \nm{L})^k \right) \nm{w^n}.
	\end{equation*}
	When $\tau \nm{L} = \mycfl$ is sufficiently small, 
	we have $\nm{u^n}\geq \frac12 \nm{w^n}$ and  $\nm{w^n}\leq 2\nm{u^n}$. 
	It follows that $\tau^{2 k} \nm{L^{k} w^n}^2\leq \mycfl^{2k}\nm{w^n}^2 \leq 4\mycfl^{2k}\nm{u^n}^2$. 
	Similar arguments yield $\beta_\zeta + \mycfl^2 g_\beta(\mycfl) \le 2 |\beta_\zeta|$ and  
	$c_\rho(1 + \mycfl^2 g_\rho (\mycfl)) \le 2 c_\rho$ when $\mycfl$ is sufficiently small. 
	These together with the energy estimate in \cref{lem:eeconditional} imply 
	\begin{align*}
		\nm{u^{n+1}}^2 & \le  \nm{u^n}^2 +  2 |\beta_\zeta| \tau^{2\zeta}\nm{L^\zeta w^n}^2 + 2 c_\rho \mycfl  \tau^{2\rho}\nm{L^\rho w^n}^2
		\\
		& \le  \left(1+ 8 |\beta_\zeta| \mycfl^{2\zeta} + 8c_\rho \mycfl^{2\rho+1}\right)\nm{u^n}^2 \leq (1+C\mycfl^\kappa )\nm{u^n}^2,  
	\end{align*}
    under the constraint $\mycfl \leq \mycfl_0$ for some positive constant $\mycfl_0$. 
    Furthermore, if $\mycfl^\kappa/\tau$ is bounded, we have 
	\begin{equation*}
		\nm{u^{n}}^2 \leq  (1+C\mycfl^\kappa )^n \nm{u^0}^2=   (1+C\mycfl^\kappa )^{
			\mycfl^{-\kappa} \cdot t_n \cdot  \frac{ \mycfl^\kappa }{\tau} } \nm{u^0}^2 \leq e^{Ct^n\mycfl^\kappa/\tau}\nm{u^0}^2\leq e^{Ct^n}\nm{u^0}^2.
	\end{equation*}

We then turn to prove the second part of the theorem. Observe that
$\mycfl g_\beta(\mycfl) \le 1$ and $\mycfl^2 g_\rho (\mycfl) \le 1$ when $\lambda \le \widehat \lambda_0$ for some constant $\widehat \lambda_0$. Thanks to \cref{lem:eeconditional}, when $\zeta\leq \rho$ and $\lambda \le \widehat \lambda_0$ we then have 
\begin{align*}
		\nm{u^{n+1}}^2 - \nm{u^n}^2 & \leq \left(\beta_\zeta + \mycfl \right)\tau^{2\zeta}\nm{L^\zeta w^n}^2 + 2 c_\rho  \mycfl   \tau^{2\rho}\nm{L^\rho w^n}^2
		\\
		& \le \left(\beta_\zeta + \mycfl \right)\tau^{2\zeta}\nm{L^\zeta w^n}^2 + 2 c_\rho \mycfl   \tau^{2\rho}  \nm{L}^{ 2(\rho-\zeta) } \nm{L^\zeta w^n}^2 
		\\
		& \le \left( \beta_\zeta + \mycfl + 2  \mycfl c_\rho \widehat \lambda_0 ^{ 2(\rho-\zeta) }  \right) \tau^{2\zeta}\nm{L^\zeta w^n}^2,
\end{align*}
where the last term is non-positive if $\mycfl \le |\beta_\zeta|/( 1+2 c_\rho \widehat \lambda_0^{ 2(\rho-\zeta)} ) $. We therefore obtain $\nm{u^{n+1}}^2\leq \nm{u^n}^2$ under the constraint $\mycfl \le \mycfl_0$ with 
$ \mycfl_0:=\min \{ \widehat \lambda_0, |\beta_\zeta|/( 1+2 c_\rho \widehat \lambda_0^{ 2(\rho-\zeta)} ) \} $.

For the third part, one can consider a special operator $L$ satisfying $L^\zeta Q^{-1}\neq O$ but $L^\top + L= O$, so that the last term in \cref{eq:energyexp} vanishes. 
It then follows from \cref{lem:abc} that  
\begin{align*}
	\nm{{\mathcal R}(\tau L)  u^{n}}^2 - \nm{u^n}^2 
	= \sum_{k = \zeta}^s \beta_k\tau^{2k}\nm{L^k Q^{-1} u^n}^2 
\ge  \left( \beta_\zeta - \sum_{k = \zeta+1}^s |\beta_k| \lambda  ^{2k} \right) \tau^{2\zeta} \nm{L^\zeta Q^{-1} u^n}^2.
\end{align*}
Hence when $\lambda$ is sufficiently small, we have $\nm{{\mathcal R}(\tau L)  u^{n}} / \nm{u^n}>1$ for all $u^n$ satisfying $L^\zeta Q^{-1}u^n \neq 0$, which implies $\nm{{\mathcal R}(\tau L)}>1$. The proof is completed.
\end{proof}

\begin{remark}
	If system \cref{eq-odes} is obtained from spatially semi-discrete schemes for linear
	hyperbolic conservation laws, then we have $\nm{L} = \mathcal{O}(h^{-1})$, where $h$ is the spatial mesh size. In this case, the time step constraint $\lambda = \tau \nm{L} \leq \lambda_0$ in \cref{thm:conditionalstability} becomes the Courant–Friedrichs–Lewy (CFL) condition $\tau \leq Ch$.  The time step constraint for weak$(\kappa)$ stability, $\tau \nm{L}^{1+1/(\kappa-1)}\leq \mycfl_0$,  becomes $\tau \leq C h^{1+1/(\kappa-1)}$.  
\end{remark}

\begin{remark}
	The stability analyses in \cref{thm:conditionalstability} and \cite{sun2019strong} are closely connected with the $L^2$-stability analysis of RK discontinuous Galerkin schemes for the linear advection equation by Xu \emph{et al.}~in \cite{xu20192,xu2020superconvergence},  
	where the weak$(\kappa)$ stability was systematically studied, and  
	 the property $\nm{u^{n+1}}^2\leq \nm{u^n}^2$ was called monotonicity stability in \cite{xu20192,xu2020superconvergence}. 
		The discussions in \cite{xu20192,sun2019strong,xu2020superconvergence} 
		were focused on explicit RK methods. 
		In the present paper, our framework,  including the discrete energy laws and stability results,  
		applies to both general implicit and explicit RK methods. 
\end{remark}

\section{Examples on discrete energy laws}\label{sec:ex}
This section gives several specific examples of implicit methods to further illustrate the proposed discrete energy law \cref{eq:enegyexp-cholesky} in \cref{thm:energy}.

\subsection{Examples of unconditional strong stability} We first use our framework to derive the energy identity for several A-stable implicit RK schemes. For these schemes, the conditions in \cref{thm:unstab} are satisfied so that the strong stability holds without any time step constraint. 
\begin{example}[Euler backward method] 
	The stability function of this method is ${\mathcal R}(Z ) = (I-Z)^{-1}$. 
Using \cref{lem:abc} gives  
$
\mathbf{B}={\diag}\{ 0, -1 \}$ and $\pmb{\Upsilon}=(-1) = -\mathbf{U}^\top \bfmyLambda \mathbf{U}$ with $\bfmyLambda = (1)$ and $\mathbf{U} = (1)$. 
Since $w^n =Q^{-1} u^n = \mathcal{R}(\tau L) u^n = u^{n+1}$, according to \cref{thm:energy} we obtain the energy law as  
\begin{equation*}
	\nm{u^{n+1}}^2 - \nm{u^n}^2 = -\tau^2\nm{Lu^{n+1}}^2 -\tau \qnm{u^{n+1}}^2.
\end{equation*}
\end{example}

\begin{example}[Crank--Nicolson method and implicit midpoint method] 
    The stability functions of these two methods are both  
	$
	{\mathcal R}(Z ) = \left(I-\frac{Z}2\right)^{-1} \left(I+\frac{Z}2\right).
	$ 
	By \cref{lem:abc}, we have 
	$
	\mathbf{B}={\diag}\{ 0, 0 \}$ and $\pmb{\Upsilon}=(-1) = -\mathbf{U}^\top \bfmyLambda \mathbf{U}$ with $\bfmyLambda = (1)$ and $\mathbf{U} = (1)$. 
	According to \cref{thm:energy}, we obtain the energy identity 
	\begin{equation*}
		\nm{u^{n+1}}^2 - \nm{u^n}^2 =-\tau \qnm{w^n}^2.
	\end{equation*}
\end{example}

\begin{example}[Qin and Zhang \cite{qin1992symplectic}] 
	The Butcher tableau and stability function of this method are  
	$$\begin{array}{c|cc}
		\frac{1}{4} & \frac{1}{4} & 0 \\
		\frac{3}{4} & \frac{1}{2} & \frac{1}{4} \\\hline
		& \frac{1}{2} & \frac{1}{2} \\
	\end{array}
	\qquad \qquad
	{\mathcal R}(Z ) = \left(I-\frac{Z}{2}+\frac{Z^2}{16} \right)^{-1} \left(I+\frac{Z}{2}+\frac{Z^2}{16} \right).
	$$
	According to \cref{lem:abc}, we have $\mathbf{B}={\diag}\left\{ 0, 0, 0 \right\}$ 
	and 
	\begin{equation*}
	\pmb{\Upsilon} = {\diag}\left\{ -1,-1/16 \right\} = -\mathbf{U}^\top \bfmyLambda \mathbf{U}, \quad \bfmyLambda =\diag \left\{1,\frac{1}{4}\right\}, \quand  
	\mathbf{U} = \left(
	\begin{array}{cc}
	1 & 0 \\
	0 & 1 \\
	\end{array}
	\right). 
	\end{equation*}  
	Thanks to \cref{thm:energy}, we obtain the corresponding energy identity
	\begin{equation*}
		\nm{u^{n+1}}^2 - \nm{u^n}^2 = -\tau\qnm{w^n}^2 -\frac{1}{16}\tau^3\qnm{Lw^n}^2.
	\end{equation*}
\end{example}

\begin{example}[Kraaijevanger and Spijker \cite{KRAAIJEVANGER198971}] 
	The Butcher tableau and corresponding stability function of this method are 
	$$\begin{array}{c|cc}
		\frac{1}{2} & \frac{1}{2} & 0 \\
		\frac{3}{2} & -\frac{1}{2} & 2 \\
		\hline
		& -\frac{1}{2} & \frac{3}{2} \\
	\end{array} \qquad \qquad  {\mathcal R}(Z ) = \left( I-\frac{5 Z}{2}+Z^2 \right)^{-1} \left( I-\frac{3 Z}{2}+\frac{Z^2}{2} \right).$$ 
	According to \cref{lem:abc}, we have $\mathbf{B}={\diag}\left\{ 0, -3, -\frac{3}{4} \right\}$ and 
	$$
	\pmb{\Upsilon}=\begin{pmatrix}
				-1 & \frac12\\
				\frac12 & -\frac7 4
			\end{pmatrix} = -\mathbf{U}^\top \bfmyLambda \mathbf{U}, \quad \text{  with  }~~ \bfmyLambda = \diag\left\{1,\frac{3}{2}\right\} \quand \mathbf{U}= 
			\begin{pmatrix}
		1 & -\frac{1}{2} \\
	0 & 1 \\
	\end{pmatrix}. 
	$$
	By \cref{thm:energy}, we obtain the discrete energy law as
	\begin{equation*}
		\nm{u^{n+1}}^2 - \nm{u^n}^2 = - 3\tau^2\nm{Lw^n}^2 -  \frac{3}{4} \tau^4\nm{L^2w^n}^2 - \tau \qnm{\left(I-\frac{\tau}{2} L\right)w^n}^2- \frac{3}{2}\tau^3 \qnm{L w^n}^2.
	\end{equation*}
\end{example}

\subsection{Examples of conditional stability}
Next, we derive the energy laws for two implicit methods which are not A-stable. Conditional stability can be obtained by \cref{thm:conditionalstability}. 
\begin{example}[Weak stability]
	This example considers the $(0,3)$ Pad\'e approximation with the stability function 
	$
	\mathcal{R}(Z) = \left(I - Z + \frac{Z^2}{2}-\frac{Z^3}{6}\right)^{-1}. 
	$ 
	This method is $A(\alpha)$-stable with $\alpha \leq 88.23^o$; see \cite[Page 46]{wanner1996solving}. If applying it to a generic linear seminegative problem \eqref{eq-odes}, the unconditional stability would not hold in general. According to \cref{lem:abc}, we get 
	\begin{equation*}
	\mathbf{B} = \diag\left\{0,0,\frac{1}{12},-\frac{1}{36}\right\}, \qquad \pmb{\Upsilon} = \left(
	\begin{array}{ccc}
	-1 & \frac{1}{2} & -\frac{1}{6} \\
	\frac{1}{2} & -\frac{1}{3} & \frac{1}{6} \\
	-\frac{1}{6} & \frac{1}{6} & -\frac{1}{12} \\
	\end{array}
	\right).
	\end{equation*}
	Direct calculation shows that $\pmb{\Upsilon}$ has a positive eigenvalue, implying it is not negative semidefinite. But its second-order principle submatrix is negative semidefinite.  Moreover, with 
	$\pmb{\Delta} =\diag\left\{0,0,\frac{1}{36}\right\}$  the matrix 
	$\pmb{\tilde{\Upsilon}} := \pmb{\Upsilon} - \pmb{\Delta}$ is negative semidefinite and admits the following Cholesky type decomposition 
	\begin{equation*}
	\pmb{\tilde{\Upsilon}} = -\mathbf{\tilde U}^\top \bftildemyLambda\mathbf{\tilde U}, \quad  \bftildemyLambda = \diag\left\{1,\frac{1}{12},0\right\},\quad \mathbf{\tilde{U}} = \left(
	\begin{array}{ccc}
	1 & -\frac{1}{2} & \frac{1}{6} \\
	0 & 1 & -1 \\
	0 & 0 & 1 \\
	\end{array}
	\right).
	\end{equation*}
	Thanks to \cref{thm:energy}, we obtain the energy identity
	{\small \begin{equation*}
	\nm{u^{n+1}}^2 - \nm{u^n}^2 = \frac{\tau^4}{12}\nm{L^2 w^n}^2 -\frac{\tau^6}{36}\nm{L^3 w^n}^2 - {\tau}\qnm{\left(I - \frac{\tau}{2}L +\frac{\tau^2}{6}L^2\right)w^n}^2 - \frac{\tau^3}{12}\qnm{L(I - \tau L)w^n}^2 + \frac{\tau^5}{36}\qnm{L^2 w^n}^2. 
	\end{equation*}}
	Thus $\zeta = 2$ and $\rho = 2$, and by  \cref{thm:conditionalstability}, the $(0,3)$ Pad\'e approximation is weakly$(\kappa)$ stable with $\kappa=4$. 
\end{example}

\begin{example}[Strong stability]
	We consider the $(4,1)$ Pad\'e approximation whose stability function is 
	$
	R(Z) = \left(I - \frac{Z}{5}\right)^{-1}\left(I +\frac{4 Z}{5}+ \frac{3 Z^2}{10}+\frac{Z^3}{15}+\frac{Z^4}{120}\right).
	$ 
According to \cref{lem:abc}, we obtain 
\begin{equation*}
\mathbf{B} = \diag\left\{0,0,0,-\frac{1}{1800},\frac{1}{14400}\right\}\quand \pmb{\Upsilon} = - \left(
\begin{array}{cccc}
1 & \frac{3}{10} & \frac{1}{15} & \frac{1}{120} \\
\frac{3}{10} & \frac{13}{75} & \frac{9}{200} & \frac{1}{150} \\
\frac{1}{15} & \frac{9}{200} & \frac{1}{75} & \frac{1}{400} \\
\frac{1}{120} & \frac{1}{150} &\frac{1}{400} & \frac{1}{1800} \\
\end{array}
\right). 
\end{equation*}
	Direct calculation shows that $\pmb{\Upsilon}$ has a positive eigenvalue, implying it is not negative semidefinite. But its third-order principle submatrix is negative semidefinite.  Moreover, with $\pmb{\Delta} =\diag\left\{0,0,0,\frac{1}{14400}\right\}$, 
	the matrix $\pmb{\tilde{\Upsilon}} := \pmb{\Upsilon} - \pmb{\Delta}$ is negative semidefinite 
	  and admits the following Cholesky type decomposition 
\begin{equation*}
\pmb{\tilde{\Upsilon}} = -\mathbf{\tilde U}^\top \bftildemyLambda\mathbf{\tilde U}, \quad  \bftildemyLambda = \diag\left\{1,\frac{1}{12},\frac{1}{720},0\right\},\quad \mathbf{\tilde{U}} = \left(
\begin{array}{cccc}
1 & \frac{3}{10} & \frac{1}{15} & \frac{1}{120} \\
0 & 1 & \frac{3}{10} & \frac{1}{20} \\
0 & 0 & 1 & \frac{1}{2} \\
0 & 0 & 0 & 1 \\
\end{array}
\right).
\end{equation*}
According to \cref{thm:energy}, we have the following energy law
\begin{align*}
\nm{u^{n+1}}^2 - \nm{u^n}^2 
=&-\frac{\tau^6}{1800}\nm{L^3w^n}^2 + \frac{\tau^8}{14400}\nm{L^4 w^n}^2
-\tau\qnm{\left(I + \frac{3\tau}{10}L + \frac{\tau^2}{15}L^2 + \frac{\tau^3}{120}L^3\right)w^n}^2
\\ 
& -\frac{\tau^3}{12}\qnm{L\left(I+\frac{3\tau}{10}L + \frac{\tau^2}{20}L^2\right)w^n}^2
-\frac{\tau^5}{720}\qnm{L^2\left(I+\frac{\tau}{2}L\right)w^n}^2 + \frac{\tau^7}{14400}\qnm{L^3 w^n}^2. 
\end{align*}
This implies $\zeta = 3$, $\rho = 3$, and $\beta_\zeta <0$. We conclude the conditional strong stability  from \cref{thm:conditionalstability}. 
\end{example}

\section{Unified energy law for general diagonal Pad\'e approximations}\label{sec:pade} 
In this section, we derive the unified discrete energy law for general diagonal Pad\'e approximations of arbitrary order. {\em The establishment of such energy law will be based on highly technical Cholesky type decomposition of a family of complicated matrices, 
whose discovery and proof are extremely nontrivial.} 

For the $(s,s)$ diagonal Pad\'e approximation, the stability function is given by \eqref{eq:stabilityfunc} and \eqref{eq:ndpoly} with the coefficients in  \eqref{eq:ndpoly} defined as
\begin{equation}\label{eq:mybeta}
	\theta_i = (-1)^i\vartheta_i = \frac{s!}{(2s)!}\frac{(2s-i)!}{i!(s-i)!}.
\end{equation}
Thus we have $\alpha_{i,j} = \theta_i \theta_j - \vartheta_{i} \vartheta_{j} = ( 1 - (-1)^{i+j} ) \theta_i \theta_j$.
According to \cref{lem:abc}, the matrix $\mathbf{B}=\diag(\{\beta_k\}_{k=0}^{s}) = {\bf O}$  because
\begin{equation*} 
	\beta_k = \sum_{\ell = \max\{0,2k-s\}}^{\min\{2k,s\}} \alpha_{\ell,2k-\ell}(-1)^{k-\ell} = \sum_{\ell = \max\{0,2k-s\}}^{\min\{2k,s\}}  ( 1-(-1)^{2k} )  \theta_{\ell} \theta_{2k-\ell}  (-1)^{k-\ell} = 0, 
\end{equation*}
and the symmetric matrix $\pmb{\Upsilon}=(\gamma_{i,j})_{i,j=0}^{s-1}$ is computed by 
\begin{align} \nonumber
	\gamma_{i,j} 
	&= \sum_{\ell = \max\{0,i+j+1-s\}}^{\min\{i,j\}} (-1)^{\min\{i,j\}+1-\ell} \Big( 1 - (-1)^{i+j+1} \Big) \theta_{\ell} \theta_{i+j+1-\ell}
	\\ \label{eq:gammaPade1}
	& = \Big( (-1)^i + (-1)^j \Big) \sum_{\ell = \max\{0,i+j+1-s\}}^{\min\{i,j\}} (-1)^{\ell + 1} \theta_{\ell} \theta_{i+j+1-\ell} 
	\\ \label{eq:gammaPade2}
	& = \Big(  (-1)^i + (-1)^j \Big) \left( \frac{s!}{(2s)!} \right)^2 \sum_{\ell = \max\{0,i+j+1-s\}}^{\min\{i,j\}} (-1)^{\ell + 1} 
	 \frac{(2s-\ell )!}{\ell !(s-\ell )!} 
	\frac{(2s-i-j-1+\ell)!}{(i+j+1-\ell)!(s-i-j-1+\ell)!}.
\end{align}

In order to establish the energy identity, the key step is to judge the negative semi-definiteness of the above matrix $\pmb{\Upsilon}$ and construct its Cholesky type decomposition. 
For an arbitrary $s \in \mathbb Z^+$, this is indeed a highly challenging task, because the structures of $\pmb{\Upsilon}$ are extremely complicated and all its elements \cref{eq:gammaPade2} involve complex summations of several factorial products.

{\bf After careful investigation, we find the unified explicit form of the Cholesky type decomposition of $\pmb{\Upsilon}$, as stated in \cref{thm:pade-cholesky}.}   



\begin{theorem}[Constructive matrix decomposition]\label{thm:pade-cholesky}For any $s \in \mathbb Z^+$, the symmetric matrix $\pmb{\Upsilon}$ defined by \cref{eq:gammaPade2} is always negative definite. Furthermore, it has the Cholesky type decomposition in the following unified explicit form
	\begin{equation}\label{eq:pade-cholesky}
	\pmb{\Upsilon}  = - \mathbf{U}^\top \bfhatmyLambda \mathbf{U},
	\end{equation}
	where $\bfhatmyLambda = \diag(\{ \widehat{\mylambda}_k \}_{k=0}^{s-1})$ with $\widehat{\mylambda}_k =  \frac{(k!)^2}{(2k)!(2k+1)!}$, and 
	$\mathbf{U} = (\mu_{i,j})_{i,j=0}^{s-1}$ is an upper triangular matrix with 
	\begin{equation}\label{eq:mu-pade}
	\mu_{i,j} := 
	\begin{cases}
	\displaystyle
	\frac{s!}{(2s)!}\frac{(2i+1)!}{i!(i+j+1)!}\frac{(2s+i-j)!}{(s-1-j)!}\frac{(s-1-\frac{i+j}{2})!(\frac{i+j}{2})!}{(s-\frac{j-i}{2})!(\frac{j-i}{2})!}, & \mbox{if}~ i \le j~{\mathrm{and}}~i \equiv j~({\mathrm {mod}}~2), 
	\\
	0, & \mbox{otherwise}.
	\end{cases}
	\end{equation}
\end{theorem}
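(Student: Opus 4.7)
The plan is to verify the Cholesky-type identity \eqref{eq:pade-cholesky} entrywise and to deduce negative definiteness as an immediate consequence. First, a direct substitution of $j=i$ into \eqref{eq:mu-pade} shows that $\mu_{i,i}=1$, so $\mathbf{U}$ is an upper triangular matrix with unit diagonal, hence invertible; combined with $\widehat{\mylambda}_k > 0$ for all $0 \le k \le s-1$, this implies that negative definiteness of $\pmb{\Upsilon}$ is automatic once the factorization $\pmb{\Upsilon} = -\mathbf{U}^\top \bfhatmyLambda \mathbf{U}$ is established. The essential task is therefore the scalar identity
\[
\gamma_{i,j} \;=\; -\sum_{k=0}^{\min\{i,j\}} \widehat{\mylambda}_k\,\mu_{k,i}\,\mu_{k,j}, \qquad 0 \le i,j \le s-1.
\]

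I would next reduce this to a single combinatorial identity. Because $\mu_{k,i}$ vanishes unless $k \le i$ and $k \equiv i \pmod 2$, the right-hand side is nontrivial only when $i \equiv j \pmod 2$; the parity factor $(-1)^i + (-1)^j$ appearing in \eqref{eq:gammaPade1} likewise vanishes in the opposite case, so both sides agree trivially when $i \not\equiv j \pmod 2$. Fixing $i \le j$ with matching parity and substituting \eqref{eq:mu-pade} together with $\widehat{\mylambda}_k = (k!)^2/[(2k)!(2k+1)!]$, the $k$-dependent prefactor collapses via the telescoping identity $\widehat{\mylambda}_k \cdot [(2k+1)!]^2/(k!)^2 = 2k+1$, and, after pulling out all factorials depending only on $(i,j,s)$, the right-hand side reduces to a finite sum naturally indexed by $m:=(i-k)/2$. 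Since $\gamma_{i,j}$ as written in \eqref{eq:gammaPade2} is already a finite sum over $\ell$, the whole theorem is reduced to verifying a single explicit identity between two sums of factorial products, parameterized by $i$, $j$, and $s$.

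The crux of the proof, and what I expect to be the main obstacle, is the verification of this residual identity. Following the paper's remark that techniques from the theory of hypergeometric series are required, I plan to normalize each side by the common prefactor $2(s!/(2s)!)^2$ and recast both sides as terminating generalized hypergeometric series at argument $1$. I expect them to take the form of ${}_3F_2$ or ${}_4F_3$ series with parameters given by integer linear combinations of $i$, $j$, and $s$, and for the equality to hinge on a Saalschütz-type balance condition, possibly combined with a Whipple or Watson transformation. Given the complexity of \eqref{eq:gammaPade2} and \eqref{eq:mu-pade}, and the two parity classes $(i \bmod 2) \in \{0,1\}$, a direct appeal to a single named summation theorem such as Pfaff--Saalschütz, Dixon, or Whipple is unlikely to be enough on its own; rather, I anticipate that a tailored combinatorial identity has to be constructed, most likely by coupling a classical ${}_3F_2$ summation with an auxiliary Chu--Vandermonde reduction that accounts for the parity. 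Once this residual identity is verified, symmetry $\gamma_{i,j} = \gamma_{j,i}$ extends the equality to the case $j < i$, the matrix factorization \eqref{eq:pade-cholesky} follows for every $s \in \mathbb{Z}^+$, and negative definiteness of $\pmb{\Upsilon}$ is immediate from the invertibility of $\mathbf{U}$ and the strict positivity of the entries of $\bfhatmyLambda$.
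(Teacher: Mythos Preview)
Your reduction is sound and matches the paper: both sides vanish when $i\not\equiv j\pmod 2$, and by symmetry one may take $i\le j$ of equal parity, so the theorem does reduce to a single scalar identity between two factorial sums. The observation that $\mu_{i,i}=1$ and hence negative definiteness follows automatically from the factorization is also correct.

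The gap is that your treatment of the residual identity is not a proof but a list of candidates. You write that a direct appeal to Pfaff--Saalsch\"utz, Dixon, Whipple, or Watson ``is unlikely to be enough on its own'' and that you ``anticipate that a tailored combinatorial identity has to be constructed''; that anticipation is exactly the content of the theorem, and nothing in the proposal indicates how to construct it. In particular, the $\gamma$-side of the identity, after normalization, is a sum whose summand is a product of \emph{two} separate Pochhammer ratios coming from the two $\theta$-factors in \eqref{eq:gammaPade2}, not a single ${}_3F_2$ or ${}_4F_3$ term, so recasting both sides as a single balanced hypergeometric series at argument $1$ is already nontrivial and you have not shown it can be done.

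The paper's route to the residual identity is genuinely different from what you sketch. Rather than invoking a named summation theorem, the paper makes two moves you do not mention. First, it observes that for fixed $p,q$ the entrywise defect $\mathcal F_{p,q}(s):=\gamma_{p-1,q-1}^{(s)}+\sum_k\nu_{k,p}^{(s)}\nu_{k,q}^{(s)}$ is a \emph{rational function of $s$}, so it suffices to prove $\mathcal F_{p,q}(s)=0$ for $s$ in an uncountable set $\widehat{\mathbb R}$ of non-half-integers; this frees $s$ from integrality constraints and lets one treat all factorials as Pochhammer symbols without singularity issues. Second, on $\widehat{\mathbb R}$ the paper proves not the target identity directly but the auxiliary cross-sum
\[
\sum_{i\ge 1}\nu_{i,p}^{(s)}\nu_{i,q+1}^{(s)}+\sum_{i\ge 1}\nu_{i,p+1}^{(s)}\nu_{i,q}^{(s)}=2\,\theta_p^{(s)}\theta_q^{(s)},\qquad p\equiv q+1\pmod 2,
\]
by constructing an explicit telescoping antiderivative $\Phi_n$ with $\Phi_{n+1}-\Phi_n=-\phi_n$, $\Phi_0=1$, $\Phi_p=0$. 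The desired identity for $\gamma_{p-1,q-1}^{(s)}$ then follows from a second telescoping over the index $k$ in $\sum_i\nu_{i,p}\nu_{i,q}=\sum_{k}(-1)^{k-1}\bigl(\sum_i\nu_{i,p-k+1}\nu_{i,q+k-1}+\sum_i\nu_{i,p-k}\nu_{i,q+k}\bigr)$, which collapses to the defining sum for $\gamma$. Neither of these ideas---the rational-in-$s$ extension or the two-level telescoping---appears in your proposal, and without them (or a concrete substitute) the argument is incomplete.
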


The proof of \cref{thm:pade-cholesky} is very technical and will be given in \cref{sec:choleskyPade} for better readability.

\subsection{Unified discrete energy law and unconditional stability}

Combining \cref{thm:pade-cholesky} with \cref{thm:energy}, we immediately obtain the discrete energy laws of 
all the diagonal Pad\'e approximations in 
a unified form.  

\begin{theorem}[Unified energy law and unconditional stability]\label{thm:pade-ed-series}
	For any $s \in \mathbb Z^+$, 
	 the $(s,s)$ diagonal Pad\'e approximation for general linear seminegative system \eqref{eq-odes} admits the following discrete energy law
	\begin{equation}\label{eq:pade-ed-series}
	\nm{u^{n+1}}^2 - \nm{u^n}^2 = -\sum_{k = 0}^{s-1} \widehat{\mylambda}_k \tau^{2k+1}\qnm{L^k {u}^{(k)}}^2,
	\end{equation}
	where $\widehat{\mylambda}_k =  \frac{(k!)^2}{(2k)!(2k+1)!}$ and 
	\begin{equation}\label{EL-d-u}
	{u}^{(k)} := \sum_{j = k }^{s-1} {\mu}_{k,j}(\tau L)^{j-k} Q^{-1} u^n,
	\end{equation}
	with ${\mu}_{k,j}$ defined by \eqref{eq:mu-pade}. The energy law \cref{eq:pade-ed-series} implies 
	$\nm{u^{n+1}} \le \nm{u^n}$ for all $\tau >0$, which means all diagonal Pad\'e approximations 
	are  unconditionally strongly stable for general linear seminegative systems. 
\end{theorem}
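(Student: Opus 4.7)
The plan is to derive Theorem~\ref{thm:pade-ed-series} as an essentially mechanical assembly of the general energy identity in Theorem~\ref{thm:energy} together with the constructive matrix decomposition in Theorem~\ref{thm:pade-cholesky}. First I would record the key symmetry of the $(s,s)$ diagonal Padé approximation: the formula \eqref{eq:mybeta} yields $\vartheta_i = (-1)^i\theta_i$, and therefore $\alpha_{i,j} = \theta_i\theta_j - \vartheta_i\vartheta_j = (1-(-1)^{i+j})\theta_i\theta_j$. Substituting this into formula \eqref{eq:beta} and rearranging the sum over $\ell$ shows $\beta_k = 0$ for every $0 \le k \le s$ (as already spelled out in the excerpt); hence $\mathbf{B} = \mathbf{O}$ and the leading pseudo-norm block in \eqref{eq:enegyexp-cholesky} drops out.

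Next, Theorem~\ref{thm:pade-cholesky} delivers the decomposition $\pmb{\Upsilon} = -\mathbf{U}^\top \bfhatmyLambda \mathbf{U}$ with $\pmb{\Upsilon}$ negative definite and the explicit entries \eqref{eq:mu-pade}. Because $\pmb{\Upsilon}$ is already negative semidefinite on its own, I would invoke Theorem~\ref{thm:energy} with the trivial correction $\pmb{\Delta} = \mathbf{O}$, i.e.\ $\delta_k = 0$ for all $k$, and set $\mathbf{\tilde U} = \mathbf{U}$, $\bftildemyLambda = \bfhatmyLambda$. A brief sanity check on \eqref{eq:mu-pade} in the case $i = j$ confirms that $\mu_{k,k} = 1$, so $\mathbf{U}$ does have unit diagonal as demanded by the hypothesis of Theorem~\ref{thm:energy}. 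The identity \eqref{eq:enegyexp-cholesky} then collapses, after the vanishing $\beta_k$ and $\delta_k$ terms are discarded, to exactly \eqref{eq:pade-ed-series} with $u^{(k)}$ as defined in \eqref{EL-d-u}.

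For the stability claim, I would observe that $\widehat{\mylambda}_k = (k!)^2/[(2k)!(2k+1)!] > 0$ and that $\qnm{L^k u^{(k)}}^2 \ge 0$ since $\qnm{\cdot}$ is a semi-norm induced by the seminegative operator $L$ through \eqref{eq:Def-semi-inner}. Consequently every summand on the right-hand side of \eqref{eq:pade-ed-series} is non-positive, so $\nm{u^{n+1}}^2 \le \nm{u^n}^2$ for arbitrary $\tau > 0$ and arbitrary linear seminegative $L$. Iterating gives the unconditional strong stability of every diagonal Padé approximation.

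The genuinely hard ingredient here is of course Theorem~\ref{thm:pade-cholesky}: the matrix $\pmb{\Upsilon}$ in \eqref{eq:gammaPade2} is defined by a nested sum of alternating factorial products, and recognizing that it admits the clean closed-form decomposition \eqref{eq:pade-cholesky}--\eqref{eq:mu-pade} is exactly the combinatorial/hypergeometric-series content deferred to Section~\ref{sec:choleskyPade}. Taking that result as a black box, the present theorem is essentially immediate, and no additional estimates beyond positivity of $\widehat{\mylambda}_k$ and of $\qnm{\cdot}^2$ are required.
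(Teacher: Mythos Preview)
Your proposal is correct and follows exactly the paper's approach: the paper simply states that the result follows immediately by combining Theorem~\ref{thm:pade-cholesky} with Theorem~\ref{thm:energy}, and you have spelled out precisely this combination (including the observations $\mathbf{B}=\mathbf{O}$, $\pmb{\Delta}=\mathbf{O}$, and $\mu_{k,k}=1$). Nothing is missing and no alternative route is taken.
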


\subsection{Connections between continuous and discrete energy laws}\label{sec:connection}

Having found the above unified discrete energy law, 
we are now in the position to explore the connections between the continuous energy law \cref{eq:ode-ed-series} in 
\cref{thm:ode-ed-series} and the discrete energy law \cref{eq:pade-ed-series} in \cref{thm:pade-ed-series}. 

In fact, the discrete energy law \cref{eq:pade-ed-series} of the $(s,s)$ diagonal Pad\'e approximation 
is a truncated approximation to the continuous energy law \cref{eq:ode-ed-series}. 
It is clearly seen that the continuous and discrete laws share the same 
 expansion coefficients $\widehat{d}_k$ of the first $s$ terms. 
Although the quantity $u^{(k)}$ in \cref{EL-d-u} is not exactly equal  
to $\widehat{u}^{(k)}$ in \cref{eq:uhatk}, 
they actually match up to high order. 
Notice that the series
$u^{(k)}$ in \cref{EL-d-u} is expanded in terms of $w^n=Q^{-1} u^n$, while  $\widehat{u}^{(k)}$ in \cref{eq:uhatk} is expanded in terms of $u(t^n)$. 
For ease of comparison, we can either reformulate $\widehat{u}^{(k)}$ in the similar form as $u^{(k)}$ (see \cref{lem:uhatu}), or rewrite $u^{(k)}$ in the similar form as $\widehat{u}^{(k)}$ (see \cref{lem:uuhat}). 
In order to rigorously show these two theorems, 
we need the important combinatorial identity in \cref{lem:uuhat-cid}, whose proof is  provided in \cref{proof:uuhat-cid}.


\begin{lemma}\label{lem:uuhat-cid}
	For any $i,j\in \mathbb N$ and $s \in \mathbb Z^+$ with $0 \le i \le j \leq s-1$, it holds that 
	\begin{equation*}
			\sum_{\kk = 0}^{j-i} \binom{\mys-\kk}{j-\kk}^{-1}\binom{2\mys-\kk}{j-i-\kk}\binom{i+j+1}{\kk}(-1)^\kk\\
			=
			\begin{cases}
				\displaystyle
				\frac{(\mys-1-\frac{i+j}{2})!(\frac{i+j}{2})!}{(\mys-\frac{j-i}{2})!(\frac{j-i}{2})!}(s-j), & {\rm if}~i \le j~{\mathrm{and}}~i \equiv j~({\mathrm {mod}}~2), 
				\\
				0, & {\rm otherwise}.
			\end{cases}
	\end{equation*}
\end{lemma}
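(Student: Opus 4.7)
The strategy is to recast the left-hand side as a terminating hypergeometric $_3F_2(1)$ series, reduce it via a Beta integral and the Egorychev coefficient method to an equivalent sum exhibiting a manifest parity symmetry, and close the even case with a specialized hypergeometric evaluation. Setting $m:=j-i$ and $n:=i+j+1$, the ratio of successive summands equals $\tfrac{(\ell-s)(\ell-m)(\ell-n)}{(\ell-j)(\ell-2s)(\ell+1)}$, so the left-hand side can be written as
\begin{equation*}
\binom{s}{j}^{-1}\binom{2s}{m}\,{}_3F_2\!\left(\begin{matrix}-s,\,-m,\,-n\\ -j,\,-2s\end{matrix};\,1\right).
\end{equation*}
The parameter excess of this series equals $s-j\ge 1$, so it is not Saalschützian and no classical closed form applies directly.

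The next step is to invoke the Beta integral $\binom{s-\ell}{j-\ell}^{-1}=(s-j)\int_0^1 t^{j-\ell}(1-t)^{s-j-1}\,dt$, interchange summation and integration, and collapse the resulting binomial sum by the Egorychev coefficient method together with $\sum_{r\ge 0}\binom{2s-m+r}{r}z^r=(1-z)^{-(2s-m+1)}$. The remaining $t$-integral is an Euler integral for $_2F_1$; applying the Pfaff transformation ${}_2F_1(a,b;c;z)=(1-z)^{-b}{}_2F_1(c-a,b;c;z/(z-1))$ and then extracting the coefficient of $u^m$ reduces the lemma to the equivalent assertion
\begin{equation*}
\sum_{\ell=0}^m (-1)^\ell\binom{s-\ell}{j-\ell}^{-1}\binom{2s-\ell}{m-\ell}\binom{i+j+1}{\ell} \;=\; \frac{s!\,(s-j)!}{i!}\sum_{k=0}^{m}(-1)^{m-k}\,\frac{(i+k)!\,(j-k)!}{(s-k)!\,(s-m+k)!\,k!\,(m-k)!}.
\end{equation*}
The sum on the right is invariant under the involution $k\mapsto m-k$, because $i+m=j$ and $j-m=i$ swap $(i+k)!(j-k)!$ with $(j-k)!(i+k)!$ while the denominator pair $(s-k)!(s-m+k)!$ is symmetric in its two arguments; meanwhile the sign $(-1)^{m-k}$ flips to $(-1)^{k}=-(-1)^{m-k}$ under this involution exactly when $m$ is odd. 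Consequently, for $m$ odd (i.e., $i\not\equiv j\pmod 2$) the involution pairs each term with its negative and the sum vanishes, yielding the second branch of the lemma.

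For the case $m=2p$ even, it remains to establish the auxiliary hypergeometric evaluation
\begin{equation*}
{}_3F_2\!\left(\begin{matrix}i+1,\,-s,\,-2p\\ -j,\,s-2p+1\end{matrix};\,1\right) = \frac{(p+1)_p\,(s-j)_p}{(s-2p+1)_p\,(j-p+1)_p},
\end{equation*}
which I would prove by applying a quadratic transformation of $_3F_2$: using the Legendre duplication $(-2p)_{2\nu}=4^\nu(-p)_\nu(\tfrac12-p)_\nu$ to split the terminating factor into half-integer Pochhammer pieces, the series decouples into its even-$k$ and odd-$k$ subseries, each of which is Saalschützian and can be summed by the Pfaff--Saalschütz theorem. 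Unwinding the Pochhammer symbols into factorials then reproduces the asserted right-hand side $(s-j)\,\tfrac{(s-1-(i+j)/2)!\,((i+j)/2)!}{(s-(j-i)/2)!\,((j-i)/2)!}$. The main obstacle is the execution of this quadratic transformation: the target identity is not one of the classical Pfaff--Saalschütz, Dixon, Watson, or Whipple summations, and the half-integer Pochhammer bookkeeping is delicate. A reliable fallback is to prove the auxiliary $_3F_2$ identity by induction on $p$ using a three-term contiguous relation, or to certify it mechanically with the Wilf--Zeilberger algorithm and then transcribe the certificate into a human-readable recurrence argument.
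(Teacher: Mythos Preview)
The central gap is that you overlooked the classical summation that actually applies. You correctly rewrite the sum as $\binom{s}{j}^{-1}\binom{2s}{m}\,{}_3F_2\!\left(\begin{smallmatrix}-s,\,-m,\,-n\\ -j,\,-2s\end{smallmatrix};1\right)$ with $m=j-i$, $n=i+j+1$, and then assert that ``no classical closed form applies directly.'' But observe that $-2s=2(-s)$ and $-j=\tfrac12\bigl((-m)+(-n)+1\bigr)$, so the series is exactly of Watson's form ${}_3F_2\!\left(\begin{smallmatrix}a,\,b,\,c\\ \frac12(a+b+1),\,2c\end{smallmatrix};1\right)$. The paper applies Watson's theorem in its terminating variant ${}_3F_2\!\left(\begin{smallmatrix}-n,\,c,\,2c+2d+n-1\\ 2c,\,c+d\end{smallmatrix};1\right)$ with $n=j-i$, $c=-s$, $d=s-j$; this immediately gives $0$ when $j-i$ is odd and an explicit product of Gamma ratios when $j-i$ is even, which after routine simplification yields the stated right-hand side. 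You even list Watson among the summations you checked for the \emph{transformed} auxiliary series, but did not test it on the original one.

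Your alternative route via the Beta integral and the involution $k\mapsto m-k$ is correct as far as it goes, and the parity argument disposing of the odd case is clean. However, the even case is left genuinely incomplete: you yourself flag the quadratic transformation as ``the main obstacle'' and fall back on suggesting contiguous-relation induction or a WZ certificate without executing either. Since Watson's theorem closes both parities in a single stroke, the entire Beta-integral detour is unnecessary, and the proof as proposed is not yet finished.
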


\begin{theorem}\label{lem:uhatu}
	Suppose $u^n = u(t^n)$. 
	The series $\widehat{u}^{(k)}$ in \cref{eq:uhatk} can be equivalently rewritten as 
	\begin{equation}
	\widehat{u}^{(k)} = \sum_{j = k}^\infty \bar{\mu}_{k,j}(\tau L)^{j-k} Q^{-1}u^n,
	\end{equation}
	where $\bar{\mu}_{k,j} := \sum_{\ell=\max\{j-s,k\}}^j\widehat{\mu}_{k,\ell}\vartheta_{j-\ell}$. Moreover, 
	the coefficients 
	$\bar{\mu}_{k,j}$ exactly coincide with those in \cref{EL-d-u}, namely, 
	 $\bar{\mu}_{k,j} = \mu_{k,j}$ for $k\leq j \leq s-1$. 
\end{theorem}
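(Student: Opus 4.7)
The plan is to verify the two claims in turn: first the rewriting of $\widehat{u}^{(k)}$ as a series in powers of $\tau L$ acting on $Q^{-1} u^n$, and second the coincidence of the resulting coefficients with $\mu_{k,j}$ for $k\le j \le s-1$.

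For the first claim, since $u^n = u(t^n)$ and $Q = \mathcal{Q}(\tau L)$ is invertible, I would write $u(t^n) = Q\, Q^{-1} u^n = \sum_{i=0}^s \vartheta_i (\tau L)^i Q^{-1} u^n$ and insert this into the definition \cref{eq:uhatk}. The resulting double series is
\[
\widehat{u}^{(k)} = \sum_{\ell = k}^{\infty} \sum_{i=0}^{s} \widehat{\mu}_{k,\ell}\,\vartheta_i\, (\tau L)^{\ell - k + i}\, Q^{-1} u^n.
\]
Reindexing by $j = \ell + i$ and collecting the constraints $0 \le i \le s$ and $\ell \ge k$ gives $\max\{j-s, k\} \le \ell \le j$, which identifies the coefficient of $(\tau L)^{j-k} Q^{-1}u^n$ as exactly $\bar{\mu}_{k,j}$. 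The interchange of summations is legitimate because the original series defining $\widehat{u}^{(k)}$ converges absolutely (as noted after \cref{thm:ode-ed-series}) and $\mathcal{Q}(\tau L)$ is a finite-degree polynomial in the bounded operator $L$.

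For the second claim, when $k \le j \le s-1$ we have $j - s \le -1 \le k$, so $\max\{j-s, k\}=k$ and $\bar{\mu}_{k,j} = \sum_{\ell=k}^{j} \widehat{\mu}_{k,\ell}\,\vartheta_{j-\ell}$. I would substitute the explicit forms $\widehat{\mu}_{k,\ell} = \frac{(2k+1)!\,\ell!}{k!(\ell-k)!(k+\ell+1)!}$ from \cref{eq:lambdamu_hat} and $\vartheta_{j-\ell} = \frac{s!(-1)^{j-\ell}(2s-j+\ell)!}{(2s)!(j-\ell)!(s-j+\ell)!}$ from \cref{eq:mybeta}, then reverse the index by $\ell \mapsto j - \ell$ and factor out all $\ell$-independent quantities. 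A short bookkeeping step rewrites the remaining summand as a product of three binomial coefficients, producing the identification
\[
\bar{\mu}_{k,j} = \frac{s!}{(2s)!}\cdot\frac{(2k+1)!}{k!(k+j+1)!}\cdot\frac{(2s+k-j)!}{(s-j)!}\sum_{\ell=0}^{j-k}\binom{s-\ell}{j-\ell}^{-1}\binom{2s-\ell}{j-k-\ell}\binom{k+j+1}{\ell}(-1)^\ell.
\]
The inner sum is precisely the one evaluated in \cref{lem:uuhat-cid} with $i=k$. If $k \not\equiv j \pmod 2$ it vanishes, matching $\mu_{k,j}=0$ in \cref{eq:mu-pade}. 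Otherwise it equals $\tfrac{(s-1-\frac{k+j}{2})!(\frac{k+j}{2})!}{(s-\frac{j-k}{2})!(\frac{j-k}{2})!}(s-j)$; absorbing the factor $(s-j)$ into $(s-j)! = (s-j)(s-1-j)!$ converts the prefactor $(s-j)!^{-1}(s-j)$ into $(s-1-j)!^{-1}$, and the closed form \cref{eq:mu-pade} for $\mu_{k,j}$ falls out exactly.

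The main obstacle in this argument is not the reduction itself, which amounts to a reindexing and a careful factorial bookkeeping, but the nontrivial combinatorial identity \cref{lem:uuhat-cid}. Here it is invoked as given, with its proof (which relies on hypergeometric series techniques) deferred to \cref{proof:uuhat-cid}. All other steps are routine algebra.
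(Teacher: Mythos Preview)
Your proposal is correct and follows essentially the same route as the paper's proof: substitute $u(t^n)=Q\,Q^{-1}u^n$ into \cref{eq:uhatk}, reindex the double sum to identify $\bar{\mu}_{k,j}$, then for $k\le j\le s-1$ insert the explicit forms of $\widehat{\mu}_{k,\ell}$ and $\vartheta_{j-\ell}$, shift the summation index, and reduce to the combinatorial identity of \cref{lem:uuhat-cid}. Your bookkeeping (including the observation $\max\{j-s,k\}=k$, the parity split, and the absorption of the factor $(s-j)$ into $(s-j)!$) matches the paper's computation line by line.
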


\begin{proof}
	Substituting $u^n=Q w^n = \sum_{k = 0}^\mys \vartheta_k (\tau L)^k w^n$ into \eqref{eq:uhatk}, we obtain 
	\begin{align*}
		\widehat{u}^{(i)} &=  \sum_{\ell = i}^\infty \widehat{\mu}_{i,\ell}(\tau L)^\ell \left(\sum_{k = 0}^\mys \vartheta_k (\tau L)^k \right) w^n
		= \sum_{\ell = i}^\infty\sum_{k = 0}^\mys \widehat{\mu}_{i,\ell} \vartheta_k (\tau L)^{\ell+k} w^n
		\\
		&
		= \sum_{\ell = i}^\infty\sum_{j = \ell}^{\ell+\mys} \widehat{\mu}_{i,\ell} \vartheta_{j-\ell} (\tau L)^j w^n
		= \sum_{j = i}^\infty\left(\sum_{\ell = \max\{j-\mys,i\}}^j \widehat{\mu}_{i,\ell} \vartheta_{j-\ell} \right)(\tau L)^j w^n=: \sum_{j = i}^\infty\bar{\mu}_{i,j} (\tau L)^j w^n.
	\end{align*}
	Recall the definitions of $\widehat{\mu}_{i,j}$ and $\vartheta_i$ in \eqref{eq:lambdamu_hat} and \eqref{eq:mybeta}, respectively. Substituting them into $\bar{\mu}_{i,j}$, we have
	\begin{align*}
		\bar{\mu}_{i,j}=& \sum_{\kk = \max\{j-\mys,i\}}^j \frac{(2i+1)! \kk!}{i!(\kk-i)!(\kk+i+1)!} \frac{\mys!}{(2\mys)!}\frac{(2\mys-(j-\kk))!}{(j-\kk)!(\mys-(j-\kk))!}(-1)^{j-\kk}\\
		=& \frac{\mys!}{(2\mys)!}\frac{(2i+1)!}{i!}\frac{(2\mys+i-j)!}{(\mys-j)!}\sum_{\kk = \max\{j-\mys,i\}}^j \frac{  (-1)^{j-\kk}  \kk!}{(\kk-i)!(\kk+i+1)!}\frac{(\mys-j)!}{(2\mys+i-j)!} \frac{(2\mys-(j-\kk))!}{(j-\kk)!(\mys-(j-\kk))!}\\
		=& \frac{\mys!}{(2\mys)!}\frac{(2i+1)!}{i!(i+j+1)!}\frac{(2\mys+i-j)!}{(\mys-j)!}\sum_{\kk = \max\{j-\mys,i\}}^j (-1)^{j-\kk} \binom{\mys+\kk-j}{\kk}^{-1}\binom{2\mys+\kk-j}{\kk-i}\binom{i+j+1}{j-\kk}\\
		=& \frac{\mys!}{(2\mys)!}\frac{(2i+1)!}{i!(i+j+1)!}\frac{(2\mys+i-j)!}{(\mys-j)!}\sum_{\kk = 0}^{\min\{j-i,\mys\}} \binom{\mys-\kk}{j-\kk}^{-1}\binom{2\mys-\kk}{j-i-\kk}\binom{i+j+1}{\kk}(-1)^\kk.
	\end{align*}
	Note that when $i \leq j \leq s-1$, we have $\min\{j-i,s\} = j - i$. 
	Using the combinatorial identity in \cref{lem:uuhat-cid}, we obtain 
	 $\bar{\mu}_{i,j} = \mu_{i,j}$ for $0\leq i, j\leq s-1$. The proof is completed. 
\end{proof}

\begin{theorem}\label{lem:uuhat}
	The series ${u}^{(k)}$ in \cref{EL-d-u} can be equivalently reformulated as 
	\begin{equation*}
	{u}^{(k)} = \sum_{j = k }^{s-1} \widehat{\mu}_{k,j} (\tau L)^{j-k} I_j u^n, 
	\end{equation*}
	where $I_j := Q_jQ^{-1}$ with $Q_j := \sum_{i=0}^{s-1-j}\vartheta_{i}(\tau L)^i$ denoting the $(s-1-j)$th order truncation of $Q$.
\end{theorem}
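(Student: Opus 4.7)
The plan is to reduce the claim to the coefficient identity already proved in \cref{lem:uhatu}. Starting from the right-hand side of the asserted equality, I would first substitute $I_j = Q_j Q^{-1}$ with $Q_j = \sum_{i=0}^{s-1-j}\vartheta_{i}(\tau L)^i$ to obtain
\begin{equation*}
\sum_{j=k}^{s-1} \widehat{\mu}_{k,j} (\tau L)^{j-k} I_j u^n = \sum_{j=k}^{s-1}\sum_{i=0}^{s-1-j} \widehat{\mu}_{k,j}\,\vartheta_i\,(\tau L)^{j+i-k} Q^{-1} u^n.
\end{equation*}
Then I would interchange the order of summation by setting $\ell := j+i$; the constraints $k\le j\le s-1$ and $0\le i\le s-1-j$ translate into $k\le j\le \ell\le s-1$, so the double sum becomes
\begin{equation*}
\sum_{\ell=k}^{s-1}\Bigg(\sum_{j=k}^{\ell} \widehat{\mu}_{k,j}\,\vartheta_{\ell-j}\Bigg) (\tau L)^{\ell-k} Q^{-1} u^n.
\end{equation*}

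The crux is recognizing the inner coefficient. For $k\le \ell\le s-1$ we have $\ell - s \le -1 < k$, so $\max\{\ell - s, k\} = k$, and the inner sum coincides exactly with the quantity $\bar{\mu}_{k,\ell}$ defined in \cref{lem:uhatu}. That lemma asserts $\bar{\mu}_{k,\ell}=\mu_{k,\ell}$ for $k\le\ell\le s-1$, which is precisely the coefficient appearing in the definition \cref{EL-d-u} of $u^{(k)}$. Consequently the reorganized expression collapses to $\sum_{\ell=k}^{s-1}\mu_{k,\ell}(\tau L)^{\ell-k}Q^{-1} u^n = u^{(k)}$, yielding the claim.

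I do not anticipate a genuine obstacle: the nontrivial combinatorial identity \cref{lem:uuhat-cid} was already consumed inside the proof of \cref{lem:uhatu}, and the present theorem is essentially the dual reading of that result, turning the expansion of $\widehat{u}^{(k)}$ in terms of $Q^{-1} u^n$ into an expansion of $u^{(k)}$ in terms of the truncated operators $I_j u^n$. The only care needed is in the bookkeeping of the summation ranges when changing variables and in verifying that the $\max$ in the definition of $\bar{\mu}_{k,\ell}$ reduces to $k$ throughout the relevant range; both are straightforward.
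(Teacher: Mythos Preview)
Your proposal is correct and essentially identical to the paper's proof: both use the coefficient identity $\mu_{k,\ell}=\bar{\mu}_{k,\ell}=\sum_{j=k}^{\ell}\widehat{\mu}_{k,j}\vartheta_{\ell-j}$ from \cref{lem:uhatu} (after observing $\max\{\ell-s,k\}=k$ for $\ell\le s-1$) together with an interchange of summation order. The only cosmetic difference is that the paper starts from the definition of $u^{(k)}$ and rearranges forward to reach the $I_j$-expression, whereas you start from the $I_j$-expression and collapse back to $u^{(k)}$.
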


\begin{proof}

	According to \cref{lem:uhatu}, we have $\bar{\mu}_{i,j} = \mu_{i,j}$ for $0\leq i,j\leq s-1$. In this case, $\max\{j-s,i\} = i$ and thus 
	$\mu_{i,j} = \bar{\mu}_{i,j} = \sum_{\ell=\max\{j-s,i\}}^j\widehat{\mu}_{i,\ell}\vartheta_{j-\ell} = \sum_{\ell=i}^j\widehat{\mu}_{i,\ell}\vartheta_{j-\ell} $. 
	Substituting this into \cref{EL-d-u} gives 
	\begin{align*}
		u^{(i)} 
		=& \sum_{j = i }^{s-1} \left(\sum_{\kk=i}^j\widehat{\mu}_{i,\kk}\vartheta_{j-\kk}\right)(\tau L)^{j-i} Q^{-1} u^n
		=\sum_{\kk=i}^{s-1}\sum_{j=\kk}^{s-1}\widehat{\mu}_{i,\kk}\vartheta_{j-\kk} (\tau L)^{j-i}Q^{-1}u^n\\
		=&\sum_{\kk=i}^{s-1}\sum_{j=0}^{s-1-\kk}\widehat{\mu}_{i,\kk}\vartheta_{j} (\tau L)^{j+\kk-i}Q^{-1}u^n
		=\sum_{\kk=i}^{s-1}\widehat{\mu}_{i,\kk}(\tau L)^{\kk-i}\left(\sum_{j=0}^{s-1-\kk}\vartheta_{j} (\tau L)^j\right)Q^{-1}u^n,
	\end{align*}
	which completes the proof.
\end{proof}

\begin{remark}\label{rem:DE_accuracy}
\cref{lem:uhatu} together with \cref{thm:ode-ed-series} and \cref{thm:pade-ed-series} gives the following estimation of the accuracy of the energy dissipation
\begin{equation*}
	\left(\nm{u(t^{n+1})}^2 - \nm{u(t^n)}^2\right) - \left(\nm{u^{n+1}}^2 - \nm{u^n}^2\right)  = \mathcal{O}(\tau^{2s+1}),
\end{equation*}
which implies for a fixed $T=n\tau$ that the total energy dissipation accuracy $\Delta E:=(\nm{u(t^{n})}^2-\nm{u(t^0)}^2) - (\nm{u^{n}}^2 - \nm{u^0}^2) =  \mathcal{O}(\tau^{2s})$. 
\end{remark}


\begin{remark}
Combining \cref{thm:pade-ed-series} with \cref{lem:uuhat}, we can derive the following precise characterization 
on the operator $\mathcal{R}(\tau L)$: 
	\begin{equation}\label{eq:ddd}
	(\mathcal{R}(\tau L))^\top \mathcal{R}(\tau L) -I=  \sum_{k = 0}^{s-1}\widehat{\mylambda}_k \tau^{2k+1} {U}_k^\top(L^\top+L){U}_k\leq O, \quad \mbox{with}~~ {U}_k = L^k\sum_{j = k}^{s-1} \widehat{\mu}_{k,j}(\tau L)^{j-k}I_j,
\end{equation}
where $\widehat{\mylambda}_{k}$ and $\widehat{\mu}_{k,j}$ are defined in \eqref{eq:lambdamu_hat}, and $I_j$ is defined in \cref{lem:uuhat}.  
Note that the operator $\mathcal{R}(\tau L)$ is the discrete approximation to the operator $e^{\tau L}$. 
The identity \cref{eq:ddd} on $\mathcal{R}(\tau L)$ is exactly the discrete counterpart of the identity \cref{eq:coercivity} on $e^{\tau L}$ of the continuous case. 
\end{remark}

In summary, our above analyses clearly demonstrate the 
unity of continuous and discrete objects.




\subsection{Proof of \cref{thm:pade-cholesky}}\label{sec:choleskyPade}
The discovery and proof of \cref{thm:pade-cholesky} are highly nontrivial and challenging. Our proof is very technical and relies on several lemmas and constructive identities.

Note that the negative definiteness of $\pmb{\Upsilon}$ is implied by the existence of the Cholesky type decomposition \cref{eq:pade-cholesky} with positive $\widehat d_k$ for all $k$. 
Therefore, we only need to prove the identity \cref{eq:pade-cholesky} for any $s \in \mathbb Z^+$. 
Define $\mathbf{F}(s) := \pmb{\Upsilon} + \mathbf{U}^\top \bfhatmyLambda\mathbf{U}$. Then the goal is to show that the matrix-valued function $\mathbf{F}(s)\equiv \mathbf{O}$ is identically zero for all $s\in \mathbb{Z}^+$. 

Let ${\mathcal F}_{p,q}(s)$ denote the $(p,q)$ element of ${\bf F}(s)$. 
In order to clearly show the dependence of ${\mathcal F}_{p,q}(s)$ on $s$, we will equivalently  
reformulate it with some new notations. First, we introduce 
\begin{equation}\label{eq:Def-beta}
	\mybeta_0^{(\mys)} := 1, \qquad 
	\mybeta_i^{(\mys)} :=
	\frac{1}{i!} ~ \frac{ \mys(\mys-1) \cdots (\mys-i+1) }{ { 2\mys( 2\mys-1) \cdots ( 2\mys-i+1) } }, \quad i\in \mathbb Z^+,
\end{equation}
which satisfy $\mybeta_i^{(\mys)} = \theta_i$ for $0\le i \le \mys$, and $\mybeta_i^{(\mys)} = 0$ for $\mys<i<2 \mys$. Furthermore, we define 
\begin{equation}\label{eq:Defg}
	\gamma_{p,q}^{(\mys)} := \left[ (-1)^p + (-1)^q \right] \sum_{i=0}^{\min \{p,q\}} (-1)^{i+1} \mybeta_i^{(\mys)} \mybeta_{p+q+1-i}^{(\mys)}, \qquad p,q\in \mathbb N.
\end{equation}
Note that $\mybeta_{p+q+1-i}^{(\mys)}=0$ for $p+q+1-i>s$, which along with \cref{eq:gammaPade1} implies 
\begin{equation}\label{eq:WKL3212}
	\gamma_{p,q}^{(s)} = \gamma_{p,q},\quad 0\leq p,q\leq s-1. 
\end{equation}
For $i,j \in \mathbb Z^+$, we define
\begin{equation}\label{eq:Defalpha}
	\myalpha_{i,j}^{(\mys)} := 
	\begin{cases}
		\displaystyle
		\frac{ \mys! }{ (2\mys)! } \frac{ 2 \sqrt{2i-1} }{ (i+j)! } 
		\frac{ (2\mys+i-j)! \left(\mys - \frac{i+j}2 \right)! \left(  \frac{i+j}2 \right)!  } 
		{ (\mys-j)! \left( \mys - \frac{j-i}2 \right)! \left(  \frac{j-i}2 \right)!  }, & \mbox{if}~ i \le j~{\mathrm{and}}~i \equiv j~({\mathrm {mod}}~2), 
		\\
		0, & \mbox{otherwise}.
	\end{cases}
\end{equation}
One can verify that $\myalpha_{i,j}^{(s)} = \sqrt{\mylambda_{i-1}}\mu_{i-1,j-1}$ for $1\leq i,j\leq s$. 
Therefore, for $1\le p,q\le s$, ${\mathcal F}_{p,q}(s)$ can be equivalently reformulated as 
\begin{equation}\label{eq:DefFpq}
	{\mathcal F}_{p,q}(s) = \gamma_{p-1,q-1}^{ (\mys) } + \sum_{i=1}^{\min\{p,q\}} \myalpha_{i,p}^{(\mys)} \myalpha_{i,q}^{(\mys)} \qquad \forall  s \in \mathbb Z 
\end{equation}

We have the following two crucial observations. 

\begin{observation}\label{OB1}
	For any fixed $p,q \in \mathbb Z^+$, the function ${\mathcal F}_{p,q}(s)$ in \cref{eq:DefFpq} is a rational function of $s$.  
\end{observation}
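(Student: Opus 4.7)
The plan is to verify rationality ingredient by ingredient, simply exploiting that for any non-negative integer $m$ and any $s\in\mathbb Z^+$ with $s\ge m$, the ratio $s!/(s-m)! = s(s-1)\cdots(s-m+1)$ is a polynomial in $s$ of degree $m$, so that whenever two factorials with $s$-dependent arguments differ by a constant amount, their ratio is a rational function of $s$. The point of \cref{OB1} is precisely that after reformulation via the auxiliary quantities $\mybeta_i^{(s)}$ and $\myalpha_{i,j}^{(s)}$, all the $s$-dependence in $\mathcal F_{p,q}(s)$ is of this form.

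First, I would address $\mybeta_i^{(s)}$. For $i\in\mathbb N$, its definition \cref{eq:Def-beta} writes $\mybeta_i^{(s)}$ as the quotient of two polynomials in $s$ of degree $i$ (the numerator $s(s-1)\cdots(s-i+1)$ and the denominator $2s(2s-1)\cdots(2s-i+1)$), so $\mybeta_i^{(s)}$ is manifestly a rational function of $s$ for each fixed $i$. Consequently, $\gamma_{p-1,q-1}^{(s)}$ defined in \cref{eq:Defg}, being a finite linear combination of products $\mybeta_i^{(s)}\mybeta_{p+q-1-i}^{(s)}$, is also a rational function of $s$.

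Next, I would treat $\myalpha_{i,j}^{(s)}$ for fixed $1\le i\le j$ with $i\equiv j\pmod 2$. Setting $a:=\tfrac{j-i}{2}\ge 0$ and $b:=\tfrac{i+j}{2}\ge 0$ (both integers, with $a\le b$ and $a+b=j$), the $s$-dependent part of \cref{eq:Defalpha} can be regrouped as
\begin{equation*}
\frac{s!}{(2s)!}\cdot\frac{(2s+i-j)!\,(s-b)!}{(s-j)!\,(s-a)!}
=\frac{s!}{(s-j)!}\cdot\frac{(s-b)!}{(s-a)!}\cdot\frac{(2s-(j-i))!}{(2s)!}.
\end{equation*}
Each of the three factors is a rational function of $s$: $s!/(s-j)!$ is a polynomial in $s$ of degree $j$; since $a\le b$, $(s-b)!/(s-a)! = 1/\big[(s-a)(s-a-1)\cdots(s-b+1)\big]$ is the reciprocal of a polynomial of degree $b-a = i$; and $(2s-(j-i))!/(2s)! = 1/\big[(2s)(2s-1)\cdots(2s-(j-i)+1)\big]$ is the reciprocal of a polynomial in $s$ of degree $j-i$. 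Hence $\myalpha_{i,j}^{(s)}$ is a rational function of $s$ for every fixed admissible pair $(i,j)$, and $\myalpha_{i,j}^{(s)}\equiv 0$ is trivially rational in the remaining cases.

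Combining the two observations, $\mathcal F_{p,q}(s)$ in \cref{eq:DefFpq} is a finite sum of products of rational functions of $s$, hence itself a rational function of $s$. There is no real obstacle here; the proof is essentially a bookkeeping exercise whose only subtlety is the bookkeeping of the ranges of the factorial quotients, which is resolved by the hypotheses $i\le j$ and $i\equiv j\pmod 2$ encoded in the definition of $\myalpha_{i,j}^{(s)}$. Note also that by construction the degrees of all numerators and denominators grow only with $p,q$ and the fixed internal index $i$, so the total degrees of the numerator and denominator of $\mathcal F_{p,q}(s)$ are bounded in terms of $p,q$ alone. This bound will presumably be used later to pass from ``rational function identically zero on sufficiently many integers'' to ``rational function identically zero.''
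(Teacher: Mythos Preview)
Your proposal is correct and follows essentially the same approach as the paper's own proof, which likewise argues that $\mybeta_i^{(\mys)}$, $\gamma_{p,q}^{(\mys)}$, and $\myalpha_{i,j}^{(\mys)}$ are each rational in $s$ and then concludes by summation. Your version is simply more explicit: the paper merely asserts that $\myalpha_{i,j}^{(\mys)}$ ``can be easily rewritten as a rational function of $s$,'' whereas you actually carry out the regrouping of the factorial ratios.
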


\begin{proof}
For any fixed $i \in \mathbb N$, the function $\mybeta_i^{(\mys)}$ defined in \cref{eq:Def-beta} is a rational function of $s$, and thus for any fixed $p,q\in \mathbb N$, the function $\gamma_{p,q}^{(\mys)}$ is also a rational function of $s$. 
Note that for any fixed $i,j \in \mathbb Z^+$, $\myalpha_{i,j}^{(\mys)}$ in \cref{eq:Defalpha} can be 
easily rewritten as a rational function of $s$. 
Therefore, for any fixed $p,q\in \mathbb Z^+$, all the terms in \cref{eq:DefFpq} are rational functions of $s$, and thus ${\mathcal F}_{p,q}(s)$ is also a rational function of $s$.  
\end{proof}

\begin{observation}\label{OB2}
All elements of $\mathbf{F}(s)$ are rational functions of $s$. 
Recall that 
a rational function vanishes at only finite points unless it is identically zero. 
Therefore, if we can prove that all elements ${\mathcal F}_{p,q}(s)$ vanish for all $s$ on an {\bf uncountable} set $\widehat{ \mathbb R }$, then it forces $\mathbf{F}(s)\equiv \mathbf{O}$ for all $s \in \mathbb{Z}^+$. 
\end{observation}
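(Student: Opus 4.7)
The plan is to deduce Observation 2 directly from Observation 1 combined with an elementary cardinality argument from one-variable algebra. The first sentence of the statement --- that every entry of $\mathbf{F}(s)$ is a rational function of $s$ --- is immediate, since Observation 1 provides this for each fixed pair $(p,q)$ and we are simply quantifying over the finitely many entries.

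Next I would record the standard auxiliary fact that a nonzero rational function of one real variable has at most finitely many real zeros. Writing ${\mathcal F}_{p,q}(s) = P_{p,q}(s)/Q_{p,q}(s)$ with $P_{p,q}, Q_{p,q} \in \mathbb{R}[s]$ and $Q_{p,q}\not\equiv 0$, any real zero of ${\mathcal F}_{p,q}$ in its domain is a real root of the polynomial $P_{p,q}$, and a nonzero polynomial of degree $d$ has at most $d$ real roots by the fundamental theorem of algebra. Hence the zero set is finite, in particular countable.

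The implication stated in Observation 2 then follows by contrapositive. If some entry ${\mathcal F}_{p_0,q_0}(s)$ were not the zero rational function, then its zero set would be countable, and therefore could not contain any uncountable $\widehat{\mathbb{R}} \subset \mathbb{R}$. Consequently, whenever all entries ${\mathcal F}_{p,q}(s)$ vanish simultaneously for every $s\in \widehat{\mathbb{R}}$ with $\widehat{\mathbb{R}}$ uncountable, every ${\mathcal F}_{p,q}$ must be the identically-zero rational function. Evaluating such a zero rational function at any point of its domain of definition yields $0$; in particular, for each $s \in \mathbb{Z}^+$ with $s \ge \max\{p,q\}$ the defining formula \cref{eq:DefFpq} returns a well-defined finite real value, since the denominators appearing in $\mybeta_i^{(\mys)}$ and $\myalpha_{i,j}^{(\mys)}$ of \cref{eq:Def-beta,eq:Defalpha} do not vanish at positive integers in the relevant range. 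Therefore ${\mathcal F}_{p,q}(s) = 0$ for all $s \in \mathbb{Z}^+$, which is precisely $\mathbf{F}(s) \equiv \mathbf{O}$ on $\mathbb{Z}^+$.

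There is no genuine obstacle in this step; the entire content of Observation 2 is the strategic realization that the integer-parameter matrix identity $\mathbf{F}(s)\equiv \mathbf{O}$ can be certified by verifying it on a merely uncountable subset of real $s$. This reformulation is the key enabler for the remainder of the proof of \cref{thm:pade-cholesky} in \cref{sec:choleskyPade}, because it legitimizes treating $s$ as a continuous real parameter and attacking the identity through hypergeometric-series techniques, rather than attempting a direct induction on integer $s$ against the extremely intricate factorial structure of \cref{eq:gammaPade2}.
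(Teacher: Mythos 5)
Your argument is correct and is essentially the same as the paper's: Observation 2 is proved exactly by combining the rationality from Observation 1 with the fact that a nonzero rational function of one variable has only finitely many zeros, so vanishing on an uncountable set forces each entry to be identically zero, hence zero at every $s\in\mathbb Z^+$. Your added remark that the defining formula \cref{eq:DefFpq} is well defined (pole-free) at the relevant positive integers is a sensible fine point that the paper leaves implicit.
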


For convenience, hereafter the factorial is extended to represent the gamma function $\Gamma(x+1)$, namely,  
$$ 
x! ~ := \Gamma (x+1), \qquad \forall x \in \mathbb R \setminus \mathbb Z^-.
$$
In our following lemmas and proofs, we will introduce some intermediate quantities that are also rational functions of $\mys$, whose denominators may vanish at $\left\{ 0,\pm \frac12, \pm 1, \pm \frac32, \dots \right\}$. To avoid potential singularity of dividing a zero denominator, we will extend the domain of $s$ from $\mathbb Z^+$ to $\mathbb{R}$ but excluding all potential singular points. 
More specifically, we will prove the following proposition. 
\begin{proposition}\label{prop:key-cholesky}
	For all $p,q \in \mathbb Z^+$, the rational function $F_{p,q}(s)$ vanishes for all $s\in \widehat{ \mathbb R }$, namely, 
		\begin{equation}\label{eq:key-cholesky}
		\gamma_{p-1,q-1}^{ (\mys) } + \sum_{i=1}^{\min\{p,q\}} \myalpha_{i,p}^{(\mys)} \myalpha_{i,q}^{(\mys)} 
		= 0 \qquad \forall p,q \in \mathbb Z^+, \qquad \forall s\in \widehat{ \mathbb R }, 
	\end{equation}
	where 
	\begin{equation}\label{eq:hatR}
		\widehat{\mathbb R}:=
		\left\{ x \in \mathbb R:~ 2x \notin \mathbb Z \right\} = \mathbb R \setminus \left\{ 0,\pm \frac12, \pm 1, \pm \frac32, \dots \right\}.
	\end{equation}
\end{proposition}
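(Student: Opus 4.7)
The approach relies on the two observations already recorded in the excerpt: both sides of \cref{eq:key-cholesky} are rational functions of $\mys$, so it is enough to verify the identity on the uncountable set $\widehat{\mathbb R}$. I would first carry out a parity reduction. The factor $(-1)^{p-1}+(-1)^{q-1}$ in $\gamma^{(\mys)}_{p-1,q-1}$ vanishes whenever $p\not\equiv q\pmod 2$, and the parity condition in \cref{eq:Defalpha} forces every product $\myalpha^{(\mys)}_{i,p}\myalpha^{(\mys)}_{i,q}$ to vanish in that same case, so both sides of \cref{eq:key-cholesky} trivially agree unless $p\equiv q\pmod 2$. By symmetry of the identity in $p,q$, one may further assume $p\le q$.

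Under this assumption, the plan is to express each of the two pieces of \cref{eq:key-cholesky} as a terminating hypergeometric series and evaluate them by a classical summation theorem. For the sum $\sum_{i=1}^{p}\myalpha^{(\mys)}_{i,p}\myalpha^{(\mys)}_{i,q}$, the parity constraint leaves only $\lceil p/2\rceil$ surviving terms; after the substitution $i=p-2\kk$ and pulling all $\kk$-independent factors out, the remaining sum becomes a ratio of Pochhammer symbols in $\kk$, which I aim to recast as a Saalsch\"utzian (balanced) ${}_3F_2(1)$. The Pfaff--Saalsch\"utz theorem then evaluates it in closed form as a product of gamma values. In parallel, I would substitute \cref{eq:Def-beta} into \cref{eq:Defg} and perform an analogous manipulation, rewriting $\gamma^{(\mys)}_{p-1,q-1}$ itself as another terminating ${}_3F_2(1)$ that is also Saalsch\"utzian, and apply the same theorem. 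Both closed forms become products of gamma values in $\mys$, $p$, $q$, and $(p\pm q)/2$; the final step is elementary factorial arithmetic showing that the two products differ by exactly the sign predicted by \cref{eq:key-cholesky}. The parity assumption $p\equiv q\pmod 2$ is crucial here, as it guarantees that the half-integer arguments $\mys-\tfrac{p+q}{2}$ and $\mys-\tfrac{q-p}{2}$ appear symmetrically on the two sides.

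The principal obstacle is the choice of summation variable that turns each of the two series into a balanced ${}_3F_2(1)$, because only in the Saalsch\"utzian case does a clean product formula exist; a careless reversal or reindexing produces a generic ${}_4F_3$ that has no closed form and blocks the argument entirely. This is the step where techniques from the theory of hypergeometric series are indispensable, and it is likely to require genuinely novel combinatorial identities (of the flavor of \cref{lem:uuhat-cid}) rather than a direct appeal to a classical table. A secondary difficulty is the bookkeeping of factorials with half-integer arguments and of summation endpoints that depend on the parities of $p$ and $q$; these are tedious but mechanical, and once the correct hypergeometric formulation is found, the identity should follow by straightforward gamma-function algebra.
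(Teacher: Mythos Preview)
Your parity reduction (case $p\not\equiv q\pmod 2$) and the symmetry reduction to $p\le q$ match the paper exactly. After that, however, your route and the paper's diverge substantially.

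\textbf{What the paper does.} The paper never evaluates $\sum_{i}\myalpha^{(\mys)}_{i,p}\myalpha^{(\mys)}_{i,q}$ in closed form. Instead it proves a \emph{shifted} identity (\cref{lem:sum_alpha-beta}):
\[
\sum_{i\ge 1}\myalpha^{(\mys)}_{i,p}\myalpha^{(\mys)}_{i,q+1}+\sum_{i\ge 1}\myalpha^{(\mys)}_{i,p+1}\myalpha^{(\mys)}_{i,q}=2\,\mybeta^{(\mys)}_p\mybeta^{(\mys)}_q,\qquad p\equiv q+1\pmod 2,
\]
by constructing an explicit telescoping sequence $\Phi_n$ with $\Phi_{n+1}-\Phi_n=-\phi_n$ (a Gosper/WZ-style certificate, \cref{Def:Phi}). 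It then writes $\sum_i\myalpha_{i,p}\myalpha_{i,q}$ as an alternating telescoping sum in the second index, applies \cref{lem:sum_alpha-beta} term by term, and reduces everything to the base case $p=1$, which is checked directly. No hypergeometric summation theorem is invoked at this stage.

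\textbf{Concern with your plan.} Your key claim---that after the substitution $i=p-2\kk$ the $\myalpha$-sum becomes a balanced ${}_3F_2(1)$---looks optimistic. Using \cref{lem:alpha-beta}, each surviving product $\myalpha^{(\mys)}_{i,p}\myalpha^{(\mys)}_{i,q}$ carries \emph{four} upper and \emph{four} lower Pochhammer factors, together with a linear factor $4j\pm c$ characteristic of a very-well-poised series. Reversing the summation order via $(a)_{n-k}=(-1)^k(a)_n/(1-a-n)_k$ swaps upper and lower parameters but does not reduce their number. So the raw series is of ${}_5F_4$ or higher type, not ${}_3F_2$, and Pfaff--Saalsch\"utz does not apply directly; you would need something like Dougall's very-well-poised summation, and even then the balance condition must be checked. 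Your proposal acknowledges a risk here but does not resolve it. The paper sidesteps this obstacle entirely by pairing consecutive parities (the left side of \cref{lem:sum_alpha-beta}) so that the combined summand admits a simple antidifference $\Phi_n$; this is the creative step you are missing.
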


The proof of \cref{prop:key-cholesky} relies on several lemmas in \cref{sec:lemmas}
and will be given in \cref{sec:cholesky-proof}. 
Note that the set $\widehat{\mathbb R}$ defined in \cref{eq:hatR} is uncountable. 
Based on \cref{OB1}, \cref{OB2} and the above arguments, once we 
 prove \cref{prop:key-cholesky}, then we immediately obtain 
\cref{eq:pade-cholesky} for all $s \in \mathbb Z^+$ and complete the proof of \cref{thm:pade-cholesky}.

\subsection{Lemmas}\label{sec:lemmas}
This section gives several important lemmas, which pave the way to prove \cref{prop:key-cholesky}. 
First, we introduce the rising factorial (sometime also called the Pochhammer symbol in the theory of hypergeometric functions),  defined by 
\begin{equation}\label{eq:Def_Pochhammer}
	(x)_0 := 1, \qquad	(x)_n :=  x (x + 1) \cdots (x+n-1) = \prod\limits_{k=0}^{n-1}  (x+k),~~~ n \ \in \mathbb Z^+, 
\end{equation}
for any $x\in \mathbb R$. Note that 
$$(x)_n \neq 0\qquad \forall x \notin \mathbb Z \quad \forall n \in \mathbb N.$$

\cref{lem:identities} gives three useful identities related to the Pochhammer symbol, whose proofs are presented in \cref{sec:identities}. 
\begin{lemma}\label{lem:identities}
	The following identities hold: 
	\begin{align}\label{ID-1}
		(x+n)! & = x! (x+1)_n \qquad \qquad  \qquad \qquad 
		\forall x \in \mathbb R,~~ \forall n \in \mathbb N,
		\\ \label{ID-2}
		(x)_n & = 2^n \left( \frac{x}2 \right)_{ \left \lceil{ \frac{n}2 }\right \rceil  } \left( \frac{x+1}2 \right)_{ \left \lfloor{ \frac{n}2 }\right \rfloor  }  \qquad \forall x \in \mathbb R,~~ \forall n \in \mathbb N,
		\\ \label{ID-3}
		\frac{ (x+i)! }{ (x-j)! } &= (-1)^j (-x)_j ( x+1 )_i \qquad  \quad ~~
		\forall x \in \mathbb R \setminus \{ j-1,j-2,\dots \},~~ \forall  i,j \in \mathbb N.
	\end{align}
\end{lemma}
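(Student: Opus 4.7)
\textbf{Proof plan for \cref{lem:identities}.} My approach will be to derive all three identities directly from the factorial/Pochhammer definition $(x)_n = x(x+1)\cdots(x+n-1)$ and the gamma-function functional equation $\Gamma(z+1) = z\Gamma(z)$, i.e.\ $(x+1)! = (x+1)\cdot x!$ for all admissible $x$. Since the factorials in the paper are extended to $\Gamma$, everything reduces to elementary manipulations once the domains are respected.

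For \cref{ID-1}, I would iterate the functional equation $n$ times to obtain
\begin{equation*}
(x+n)! = (x+n)(x+n-1)\cdots(x+1) \cdot x! = x!\cdot \prod_{k=0}^{n-1}(x+1+k) = x!\,(x+1)_n,
\end{equation*}
which holds for every $x\in\mathbb R$ (the only possible singularities on the right appear symmetrically on the left). This step is essentially a one-line induction on $n$.

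For \cref{ID-2}, I would split the product defining $(x)_n = \prod_{k=0}^{n-1}(x+k)$ into even-indexed and odd-indexed factors. Writing $x+2j = 2\bigl(\frac{x}{2}+j\bigr)$ and $x+2j+1 = 2\bigl(\frac{x+1}{2}+j\bigr)$ and counting the number of even vs.\ odd indices in $\{0,1,\dots,n-1\}$ (which are $\lceil n/2\rceil$ and $\lfloor n/2\rfloor$ respectively), one obtains
\begin{equation*}
(x)_n = \prod_{j=0}^{\lceil n/2\rceil-1} 2\Bigl(\tfrac{x}{2}+j\Bigr)\,\prod_{j=0}^{\lfloor n/2\rfloor-1} 2\Bigl(\tfrac{x+1}{2}+j\Bigr)=2^n\Bigl(\tfrac{x}{2}\Bigr)_{\lceil n/2\rceil}\Bigl(\tfrac{x+1}{2}\Bigr)_{\lfloor n/2\rfloor}.
\end{equation*}
The only care needed is to separate the even/odd cases of $n$, but both fall out of the same telescoping.

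For \cref{ID-3}, I would combine \cref{ID-1} with a sign-flip version of it. Applying \cref{ID-1} gives $(x+i)! = x!\,(x+1)_i$. For the denominator, I observe that $x!/(x-j)! = x(x-1)\cdots(x-j+1)$, and then factor out $(-1)^j$ from each factor to recognize
\begin{equation*}
x(x-1)\cdots(x-j+1) = (-1)^j(-x)(-x+1)\cdots(-x+j-1) = (-1)^j(-x)_j.
\end{equation*}
The domain restriction $x\notin\{j-1,j-2,\dots\}$ is exactly what ensures $(x-j)!$ stays away from the poles of $\Gamma$. Dividing \cref{ID-1} by this relation yields the claimed formula. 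The main obstacle here, and indeed throughout, is purely bookkeeping of admissible values of $x$ so that no gamma-pole is encountered; once the domain restrictions stated in the lemma are enforced, the three identities follow by direct computation with no deeper combinatorial input required.
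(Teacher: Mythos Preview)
Your proposal is correct and follows essentially the same approach as the paper: for all three identities the paper likewise expands the Pochhammer products directly, splits $(x)_n$ into even- and odd-indexed factors for \eqref{ID-2}, and writes $\frac{(x+i)!}{(x-j)!}$ as the product $(x-j+1)\cdots x\cdot(x+1)\cdots(x+i)$ with the sign flip $(-1)^j(-x)_j$ for \eqref{ID-3}. The only cosmetic difference is that you explicitly invoke \eqref{ID-1} as a substep in proving \eqref{ID-3}, whereas the paper writes out the full product in one line.
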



Note for any fixed $i,j \in \mathbb Z^+$ that  $\myalpha_{i,j}^{(\mys)}$ is also a rational function of $\mys$. We now establish the relations between $\myalpha_{i,j}^{(\mys)}$ and $\mybeta_{j}^{(\mys)}$.  
\begin{lemma}\label{lem:alpha-beta}
	For any $i,j \in \mathbb Z^+$ and any $\mys \in \widehat {\mathbb R}$, we have 
	\begin{align}\label{eq:AB1}
		\myalpha_{2i,2j}^{(\mys)} & = 2 \sqrt{4i-1} 
		\frac{ \left( \mys+\frac12 - j \right)_i (-j)_i }{ (j-\mys)_i \left( \frac12 +j \right)_i } \mybeta_{2j}^{ (\mys) },
		\\[2mm] 
		\myalpha_{2i-1,2j-1}^{(\mys)} &= 2 \sqrt{4 i -3} 
		\frac{ \left( \mys +\frac32 - j  \right)_{i-1} (1-j)_{i-1} }
		{ (j-\mys)_{i-1} \left( \frac12 +j \right)_{i-1} } \mybeta_{2j-1}^{ (\mys) }.
		\label{eq:AB2} 
	\end{align}
\end{lemma}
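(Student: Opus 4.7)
The plan is to establish both identities by direct substitution of \cref{eq:Defalpha} for $\myalpha_{i,j}^{(\mys)}$ and \cref{eq:Def-beta} for $\mybeta_{j}^{(\mys)}$, followed by a systematic rewriting of the resulting factorial quotients as Pochhammer symbols via \cref{lem:identities}. The parity conditions $2i\equiv 2j\pmod 2$ and $2i-1\equiv 2j-1\pmod 2$ are automatic, and the hypothesis $i\le j$ gives the corresponding inequality for the even or odd indices, so only the non-vanishing branch of \cref{eq:Defalpha} contributes. The square-root prefactors on the right-hand sides are just the $2\sqrt{2i-1}$ in \cref{eq:Defalpha} under the substitutions $i\mapsto 2i$ or $i\mapsto 2i-1$, so it remains only to match the two factorial quotients.

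For the even case, substituting the definitions and cancelling the common factor $\mys!/(2\mys)!$ reduces $\myalpha_{2i,2j}^{(\mys)}/\mybeta_{2j}^{(\mys)}$ to
\[
2\sqrt{4i-1}\cdot\frac{(2\mys+2i-2j)!\,(\mys-i-j)!\,(i+j)!\,(2j)!}{(2\mys-2j)!\,(\mys+i-j)!\,(j-i)!\,(2i+2j)!}.
\]
Using \cref{ID-1}, the two even-length ratios $(2\mys+2i-2j)!/(2\mys-2j)!=(2\mys-2j+1)_{2i}$ and $(2i+2j)!/(2j)!=(2j+1)_{2i}$ are split by \cref{ID-2} in the form $(x)_{2i}=4^{i}(x/2)_{i}((x+1)/2)_{i}$, giving $(\mys-j+\tfrac12)_i(\mys-j+1)_i$ and $(j+\tfrac12)_i(j+1)_i$ respectively. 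The remaining two ratios $(\mys+i-j)!/(\mys-i-j)!$ and $(i+j)!/(j-i)!$ are expanded via \cref{ID-3} with $x=\mys-j$ and $x=j$, producing $(-1)^{i}(j-\mys)_i(\mys-j+1)_i$ and $(-1)^{i}(-j)_i(j+1)_i$. The factors $4^{i}$ and $(-1)^{i}$ cancel in the quotient, the auxiliary length-$i$ symbols $(\mys-j+1)_i$ and $(j+1)_i$ cancel between numerator and denominator, and what remains is precisely $2\sqrt{4i-1}\,(\mys+\tfrac12-j)_i(-j)_i/[(j-\mys)_i(\tfrac12+j)_i]$.

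The odd case proceeds along the same lines after the substitution $(i,j)\mapsto(2i-1,2j-1)$. Now the factorial quotients yield Pochhammer symbols of odd length $2i-1$; before invoking \cref{ID-2}, one first extracts a single linear factor, writing $(2\mys-2j+2)_{2i-1}=(2\mys-2j+2)(2\mys-2j+3)_{2i-2}$ and $(2j)_{2i-1}=(2j)(2j+1)_{2i-2}$, so that the residual length-$(2i-2)$ Pochhammer symbols can be split by \cref{ID-2}. Applying \cref{ID-3} to the other two quotients with parameters chosen to produce length-$(i-1)$ symbols paired with auxiliary length-$i$ symbols, then cancelling the latter together with the extracted linear factors, yields $2\sqrt{4i-3}\,(\mys+\tfrac32-j)_{i-1}(1-j)_{i-1}/[(j-\mys)_{i-1}(\tfrac12+j)_{i-1}]$. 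The main obstacle throughout is bookkeeping: choosing the right parameters in \cref{ID-3} so that the auxiliary Pochhammer symbols cancel cleanly, and tracking the shifts and sign factors produced by repeated application of \cref{ID-2} and \cref{ID-3}. The restriction $\mys\in\widehat{\mathbb R}$ from \cref{eq:hatR} guarantees that all factorials $x!=\Gamma(x+1)$ and Pochhammer symbols $(x)_k$ remain finite during the manipulation, so no additional case analysis is required.
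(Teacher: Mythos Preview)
Your approach is correct and essentially identical to the paper's: both compute the ratio $\myalpha^{(\mys)}/\mybeta^{(\mys)}$, convert the factorial quotients to Pochhammer symbols via \cref{ID-1}, split them with \cref{ID-2}, and use \cref{ID-3} to produce the remaining length-$i$ (resp.\ length-$(i-1)$) symbols that cancel the auxiliary factors. The only cosmetic difference is that in the odd case you peel off one linear factor before applying \cref{ID-2} to an even-length symbol, whereas the paper applies \cref{ID-2} directly with $n=2i-1$ (so the floor/ceiling produce the mixed lengths $i$ and $i-1$); these are equivalent.

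One small correction: $i\le j$ is \emph{not} a hypothesis of the lemma --- the statement is for all $i,j\in\mathbb Z^+$. The paper disposes of the case $i>j$ in two lines by observing that then $\myalpha_{2i,2j}^{(\mys)}=\myalpha_{2i-1,2j-1}^{(\mys)}=0$ from the definition \cref{eq:Defalpha}, while on the right-hand sides $(-j)_i=0$ and $(1-j)_{i-1}=0$ (each contains a zero factor), so both identities hold trivially. You should add this observation so that your argument covers all $i,j$.
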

The proof of \cref{lem:alpha-beta} is put in \cref{sec:alpha-beta}.

For $p,q \in \mathbb Z^+$, define the following two sequences of rational functions of $\mys$: for $n=0,1,\dots$,  
\begin{align}\label{Def_varphi}
	\varphi_{n} (\mys;p,q) & := \frac{  \left(  \mys + \frac32  - p  \right)_n (1-p)_n  \left(  \mys+\frac12 - q \right)_n (-q)_n }
	{  (p-\mys+1)_n  \left(   p + \frac32 \right)_n (q-\mys+1)_n
		\left(  q + \frac32 \right)_n   }, 
	\\ \label{Def_phi}
	\phi_n (\mys;p,q) & := \varphi_{n} (\mys;p,q)  
	\frac{  {\mathcal C}_{n,p,q}^{1,\mys} + {\mathcal C}_{n,p,q}^{2,\mys}  }
	{ (\mys-p)(1+2p)(\mys-q)(1+2q) } 
\end{align}
with 
\begin{align*}
	{\mathcal C}_{n,p,q}^{1,\mys} &:=(4n+3)(1+\mys-2p)(q-n)(1+2\mys+2n-2q),
	\\
	{\mathcal C}_{n,p,q}^{2,\mys} &:= (4n+1)(\mys-2q)(1+2p+2n)(\mys-p-n).
\end{align*}
Notice that for all $n\ge p$, we have $(1-p)_n=0$, so that 
\begin{equation}\label{eq:ngep}
	\varphi_{n} (\mys;p,q)  = 0, \quad \phi_n (\mys;p,q) =0 \qquad \forall n\ge p. 
\end{equation}

\begin{lemma}\label{lem:alpha_beta_sigma}
	For any $\mys \in \widehat {\mathbb R}$, it holds 
	\begin{equation}\label{eq:alpha_beta_sigma}
		\myalpha_{2i-1,2p-1}^{(\mys)}  \myalpha_{2i-1,2q+1}^{(\mys)} 
		+ \myalpha_{2i,2p}^{(\mys)}  \myalpha_{2i,2q}^{(\mys)} = 2 \mybeta_{2p-1}^{(\mys)} \mybeta_{2q}^{(\mys)} \phi_{i-1}(\mys;p,q) \qquad 
		\forall i,p,q \in \mathbb Z^+. 
	\end{equation}
\end{lemma}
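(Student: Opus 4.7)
The plan is to directly substitute the closed forms from \cref{lem:alpha-beta} into both summands on the left-hand side of \cref{eq:alpha_beta_sigma} and reduce to a purely algebraic identity in $\mys$, $p$, $q$, $i$. Applying \cref{eq:AB2} with $j=p$ expresses $\myalpha_{2i-1,2p-1}^{(\mys)}$ in terms of $\mybeta_{2p-1}^{(\mys)}$, and with $j=q+1$ expresses $\myalpha_{2i-1,2q+1}^{(\mys)}$ in terms of $\mybeta_{2q+1}^{(\mys)}$, so the odd--odd product carries a factor $\mybeta_{2p-1}^{(\mys)}\mybeta_{2q+1}^{(\mys)}$. Likewise \cref{eq:AB1} with $j=p$ and $j=q$ shows that the even--even product carries a factor $\mybeta_{2p}^{(\mys)}\mybeta_{2q}^{(\mys)}$. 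Since the target right-hand side contains $\mybeta_{2p-1}^{(\mys)}\mybeta_{2q}^{(\mys)}$, I would convert both products to this common factor using the elementary ratios
\[
\frac{\mybeta_{2q+1}^{(\mys)}}{\mybeta_{2q}^{(\mys)}}=\frac{\mys-2q}{(2q+1)(2\mys-2q)},\qquad \frac{\mybeta_{2p}^{(\mys)}}{\mybeta_{2p-1}^{(\mys)}}=\frac{\mys-2p+1}{(2p)(2\mys-2p+1)},
\]
both of which follow immediately from the definition \cref{eq:Def-beta} of $\mybeta_i^{(\mys)}$.

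After this reduction, the identity takes the form $\mybeta_{2p-1}^{(\mys)}\mybeta_{2q}^{(\mys)}(T_1+T_2)=2\mybeta_{2p-1}^{(\mys)}\mybeta_{2q}^{(\mys)}\phi_{i-1}(\mys;p,q)$, where $T_1$ (odd--odd) inherits the coefficient $4(4i-3)$ and $T_2$ (even--even) inherits $4(4i-1)$, matching the $(4n+1)$ and $(4n+3)$ prefactors inside $\mathcal{C}_{n,p,q}^{2,\mys}$ and $\mathcal{C}_{n,p,q}^{1,\mys}$ at $n=i-1$. I would verify the stronger pair of termwise identities
\[
T_1=\frac{2\,\varphi_{i-1}(\mys;p,q)\,\mathcal{C}_{i-1,p,q}^{2,\mys}}{(\mys-p)(1+2p)(\mys-q)(1+2q)},\qquad T_2=\frac{2\,\varphi_{i-1}(\mys;p,q)\,\mathcal{C}_{i-1,p,q}^{1,\mys}}{(\mys-p)(1+2p)(\mys-q)(1+2q)},
\]
each of which is a finite identity between products of Pochhammer symbols. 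The verification reduces to repeated application of $(x)_n=x\,(x+1)_{n-1}$ at the discrepancies between the Pochhammer symbols coming from \cref{lem:alpha-beta} and those appearing in $\varphi_{i-1}$: for $T_1$ the mismatch is concentrated in the pair $(p-\mys)_{i-1},(\tfrac12+p)_{i-1}$ versus $(p-\mys+1)_{i-1},(p+\tfrac32)_{i-1}$, producing a single shift $(p-\mys+i-1)(p+i-\tfrac12)/[(p-\mys)(p+\tfrac12)]$; for $T_2$ the Pochhammer length is $i$ versus $i-1$ across four factors, producing four analogous one-term shifts. After combining with the $\mybeta$-ratios above and rewriting every $\mys\pm\tfrac12-x$ or $\tfrac12+x$ as $(2\mys\pm 1-2x)/2$ or $(1+2x)/2$, the halves and sign pairs cancel and the remaining numerators collapse exactly to $\mathcal{C}_{i-1,p,q}^{2,\mys}$ and $\mathcal{C}_{i-1,p,q}^{1,\mys}$.

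The main difficulty is purely clerical bookkeeping: one must carefully track the parity-dependent Pochhammer length ($i$ in \cref{eq:AB1} versus $i-1$ in \cref{eq:AB2}), the shift $j\mapsto q+1$ that appears only in the second odd factor, and the sign discrepancy between $(j-\mys)_n$ used in \cref{lem:alpha-beta} and the $(p-\mys+1)_n$, $(q-\mys+1)_n$ factors in $\varphi_n$. Unlike the proof of \cref{lem:uuhat-cid}, no new combinatorial identity is required here; the verification is direct manipulation of the Pochhammer symbols once the two products are aligned against the same base $\mybeta_{2p-1}^{(\mys)}\mybeta_{2q}^{(\mys)}$, and the remarkable structural fact being revealed is that the two parity contributions naturally split into the two summands $\mathcal{C}^{2,\mys}_{i-1,p,q}$ and $\mathcal{C}^{1,\mys}_{i-1,p,q}$ of $\phi_{i-1}(\mys;p,q)$.
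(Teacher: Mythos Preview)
Your proposal is correct and follows essentially the same route as the paper's proof: the paper likewise substitutes \cref{eq:AB1}--\cref{eq:AB2}, converts $\mybeta_{2q+1}^{(\mys)}$ and $\mybeta_{2p}^{(\mys)}$ to the common factor $\mybeta_{2p-1}^{(\mys)}\mybeta_{2q}^{(\mys)}$ via exactly the ratios you write, and then verifies the two termwise identities (your $T_1,T_2$) by the Pochhammer shifts $(x)_{n+1}=(x)_n(x+n)$ and $(x)_{n+1}=x\,(x+1)_n$. The only cosmetic difference is that the paper sets $n=i-1$ at the outset and writes the intermediate Pochhammer manipulations out explicitly rather than describing them.
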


\begin{proof}
	Denote $n=i-1$. 
	Using Lemma \ref{lem:alpha-beta} gives 
	\begin{align*}
		\myalpha_{2i-1,2p-1}^{(\mys)}  \myalpha_{2i-1,2q+1}^{(\mys)} 
		& \stackrel{\eqref{eq:AB2}}{=} 4 (4i-3) \mybeta_{2p-1}^{(\mys)} \mybeta_{2q+1}^{(\mys)} 
		\frac{ \left( \mys+\frac32-p \right)_{i-1} (1-p)_{i-1} }
		{ (p-\mys)_{i-1}\left( p+\frac12 \right)_{i-1} }
		\frac{ \left( \mys+\frac12-q \right)_{i-1} (-q)_{i-1} }
		{ (q+1-s)_{i-1}\left( q+\frac32 \right)_{i-1} }
		\\
		& = 4 (4n+1) \mybeta_{2p-1}^{(\mys)} \mybeta_{2q+1}^{(\mys)}  \frac{ (p+1-\mys)_{n}\left( p+\frac32 \right)_{n} }{ (p-\mys)_{n}\left( p+\frac12 \right)_{n} } \varphi_{n} (\mys;p,q)
		\\
		& = 4 (4n+1)  \mybeta_{2p-1}^{(\mys)} 
		\frac{  \mybeta_{2q}^{(\mys)}  ( \mys-2q) }{ (2\mys-2q)(2q+1) }  \frac{ (p-\mys+n) \left(p+n+\frac12\right) }{ (p-\mys)\left( p+\frac12 \right) } \varphi_{n} (\mys;p,q)
		\\
		& = 2 \mybeta_{2p-1}^{(\mys)}  \mybeta_{2q}^{(\mys)} \frac{  (4n+1)(\mys-2q)(1+2p+2n)(\mys-p-n)  }
		{ (\mys-p)(1+2p)(\mys-q)(1+2q)} \varphi_{n} (\mys;p,q). 
	\end{align*}	
	Applying Lemma \ref{lem:alpha-beta} and using $(x)_{n+1} = (x)_{n} (x+n)$ 
	and $(x)_{n+1} = (x+1)_{n} x$, we can deduce  
	\begin{align*}
		 \myalpha_{2i,2p}^{(\mys)}  \myalpha_{2i,2q}^{(\mys)}
		 & \stackrel{\eqref{eq:AB1}}{=} 
		4(4i-1)\mybeta_{2p}^{(\mys)} \mybeta_{2q}^{(\mys)} 
		\frac{ \left( \mys+\frac12-p \right)_i (-p)_i }{ (p-\mys)_i \left( p+\frac12 \right)_i } 
		\frac{ \left( \mys+\frac12-q \right)_i (-q)_i }{ (q-\mys)_i \left( q+\frac12 \right)_i } 
		\\
		& = 4(4n+3)\mybeta_{2p-1}^{(\mys)}   \frac{  \mybeta_{2q}^{(\mys)} (\mys-2p+1) }{ 2p(2\mys -2p+1) } 	\frac{ \left( \mys+\frac12-p \right)_{n+1} (-p)_{n+1} }{ (p-\mys)_{n+1} \left( p+\frac12 \right)_{n+1} } 
		\frac{ \left( \mys+\frac12-q \right)_{n+1} (-q)_{n+1} }{ (q-\mys)_{n+1} \left( q+\frac12 \right)_{n+1} } 
		\\
		&= 2 \mybeta_{2p-1}^{(\mys)}  \mybeta_{2q}^{(\mys)} 
		\frac{ (4n+3)(1+\mys-2p)(q- n )(1+2\mys+2n-2q)  }
		{ (\mys-p)(1+2p)(\mys-q)(1+2q) }  \varphi_n (\mys;p,q).
	\end{align*}
	Combining the above two equations gives \eqref{eq:alpha_beta_sigma} and completes the proof. 
\end{proof}

\begin{lemma}
	For $p,q\in \mathbb Z^+$, define a sequence of rational functions of $\mys$: for $n=0,1,\dots$, 
	\begin{equation}\label{Def:Phi}
		\Phi_n(\mys;p,q) := 
		\frac{ {\mathcal C}_{n,p,q}^{3,\mys} }{ (\mys-p)(1+2p)(\mys-q)(1+2q) } \varphi_n(\mys;p,q)
	\end{equation}
	with ${\mathcal C}_{n,p,q}^{3,\mys} := (n+p-\mys)(1+2p+2n)(n+q-\mys)(1+2q+2n) $. 
	Then, for any $\mys \in \widehat{\mathbb R}$ and $p,q \in \mathbb Z^+$, we have 
	\begin{align}
		&\Phi_0(\mys;p,q) = 1, \label{Phi0}
		\\
		&\Phi_n(\mys;p,q) = 0 \qquad \forall n \ge p, \label{PhiN}
		\\
		& \Phi_{n+1}(\mys;p,q) - \Phi_{n}(\mys;p,q) = -\phi_n (\mys;p,q) \qquad 
		\forall n \in \mathbb N. \label{PhiN+1}
	\end{align}
\end{lemma}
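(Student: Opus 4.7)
The plan is to verify the three claims separately. Items \eqref{Phi0} and \eqref{PhiN} follow by direct substitution into the definitions, while \eqref{PhiN+1} reduces, after factoring out the common multiplicative factor $\varphi_n$, to a polynomial identity in $s,p,q,n$ that can be checked by straightforward expansion.

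For \eqref{Phi0}, setting $n=0$ in \eqref{Def_varphi} collapses every Pochhammer symbol $(x)_0$ to $1$, so $\varphi_0(s;p,q)=1$; meanwhile $\mathcal{C}_{0,p,q}^{3,s}=(p-s)(1+2p)(q-s)(1+2q)$ equals the prefactor $(s-p)(1+2p)(s-q)(1+2q)$ after two sign flips, giving $\Phi_0=1$. For \eqref{PhiN}, the Pochhammer factor $(1-p)_n=(1-p)(2-p)\cdots(n-p)$ appearing in the numerator of $\varphi_n$ contains the zero factor $(1-p)+(p-1)=0$ as soon as $n\ge p$, so $\varphi_n=0$ and hence $\Phi_n=0$.

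For \eqref{PhiN+1}, the plan is to apply the one-step Pochhammer identity $(x)_{n+1}=(x)_n(x+n)$ to each of the eight Pochhammer factors in $\varphi_{n+1}$, which gives $\varphi_{n+1}=\varphi_n\rho_n$ with
\[
\rho_n=\frac{(s+\frac{3}{2}-p+n)(1-p+n)(s+\frac{1}{2}-q+n)(n-q)}{(n+1+p-s)(n+p+\frac{3}{2})(n+1+q-s)(n+q+\frac{3}{2})}.
\]
Rewriting $n+p+\frac{3}{2}=\frac{1}{2}(3+2p+2n)$ (and similarly for $q$) shows that the denominator of $\rho_n$ equals $\frac{1}{4}\mathcal{C}_{n+1,p,q}^{3,s}$, and the parallel rewriting of the numerator yields
\[
\mathcal{C}_{n+1,p,q}^{3,s}\,\rho_n=(2s+3-2p+2n)(n-p+1)(2s+1-2q+2n)(n-q).
\]
Dividing \eqref{PhiN+1} through by the common nonzero rational factor $\varphi_n/[(s-p)(1+2p)(s-q)(1+2q)]$ then reduces the claim to the polynomial identity
\[
\mathcal{C}_{n+1,p,q}^{3,s}\,\rho_n-\mathcal{C}_{n,p,q}^{3,s}+\mathcal{C}_{n,p,q}^{1,s}+\mathcal{C}_{n,p,q}^{2,s}=0,
\]
each summand being a product of four linear factors in $s,p,q,n$.

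The main obstacle is the algebraic bookkeeping in this last identity. A convenient way to organize the verification is to view each side as a polynomial of degree at most $2$ in $s$ (with $p,q,n$ fixed) and compare the coefficients of $s^0$, $s^1$, $s^2$ separately; each such coefficient is a polynomial of modest degree in $p,q,n$ whose vanishing can be confirmed by direct expansion (or by a computer algebra check). Spot checks at $(p,q,n)=(1,1,0)$ and $(2,1,0)$ confirm the identity, and no deeper hypergeometric input is needed here, since the structural work has already been absorbed into the definitions of $\varphi_n$ and the constants $\mathcal{C}^{i,s}_{n,p,q}$.
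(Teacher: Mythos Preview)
Your proposal is correct and follows essentially the same approach as the paper: direct substitution for \eqref{Phi0} and \eqref{PhiN}, then the Pochhammer recursion $(x)_{n+1}=(x)_n(x+n)$ to write $\varphi_{n+1}=\rho_n\varphi_n$ (the paper calls your $\rho_n$ by $\mathcal{C}_{n,p,q}^{4,s}$), followed by the polynomial identity $\mathcal{C}_{n+1,p,q}^{3,s}\rho_n-\mathcal{C}_{n,p,q}^{3,s}=-\mathcal{C}_{n,p,q}^{1,s}-\mathcal{C}_{n,p,q}^{2,s}$, which the paper likewise verifies ``by direct calculations.'' One small wording fix: do not say you are \emph{dividing} by the ``nonzero'' factor $\varphi_n$, since $\varphi_n$ can vanish (e.g., for $n\ge p$); phrase it instead as \emph{factoring out} $\varphi_n/[(s-p)(1+2p)(s-q)(1+2q)]$, after which the polynomial identity yields \eqref{PhiN+1} for all $n$ regardless of whether $\varphi_n=0$.
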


\begin{proof}
	{\tt Proof of \eqref{Phi0}.}	Because $(x)_0=1$, we have $\varphi_0(\mys;p,q)=1$. Then by
	${\mathcal C}_{0,p,q}^{3,\mys} = (p-\mys)(1+2p)(q-\mys)(1+2q)$, we obtain 
	$\Phi_0(\mys;p,q) = \varphi_0(\mys;p,q) = 1$. 
	
	{\tt Proof of \eqref{PhiN}.}  Recall \eqref{eq:ngep} shows  $\varphi_n(\mys;p,q) = 0$ for all $n\ge p$. 
	This immediately leads to \eqref{PhiN}. 
	
	{\tt Proof of \eqref{PhiN+1}.} 	Utilizing the relation $(x)_{n+1} = (x)_n (x+n)$ gives 
	\begin{align*}
		\varphi_{n+1}(\mys;p,q) = \frac{  \left( \mys + \frac32  - p + n  \right) (1-p+n)  \left(  \mys+\frac12 - q +n \right) (n-q) }
		{  (p-\mys+1+n)  \left(   p + \frac32 + n \right) (q-\mys+1 +n  )
			\left(  q + \frac32 +n \right)   } \varphi_{n}(\mys;p,q)
		=: {\mathcal C}_{n,p,q}^{4,\mys} \varphi_{n}(\mys;p,q).
	\end{align*}
	It follows that 
	\begin{equation*}
		\Phi_{n+1}(\mys;p,q) = \frac{ {\mathcal C}_{n+1,p,q}^{3,\mys} \varphi_{n+1}(\mys;p,q) }{ (\mys-p)(1+2p)(\mys-q)(1+2q) }  
		= \frac{ {\mathcal C}_{n+1,p,q}^{3,\mys} {\mathcal C}_{n,p,q}^{4,\mys} \varphi_{n}(\mys;p,q)  }{ (\mys-p)(1+2p)(\mys-q)(1+2q) } 
	\end{equation*}
	with ${\mathcal C}_{n+1,p,q}^{3,\mys} {\mathcal C}_{n,p,q}^{4,\mys} = 
	(2n+2\mys-2p+3)(n-p+1)(2n+2\mys-2q+1)(n-q).$ 
	By direct calculations, we observe that the identity 
	$
	{\mathcal C}_{n+1,p,q}^{3,\mys} {\mathcal C}_{n,p,q}^{4,\mys} - {\mathcal C}_{n,p,q}^{3,\mys} = - {\mathcal C}_{n,p,q}^{1,\mys} - {\mathcal C}_{n,p,q}^{2,\mys},
	$ always holds, 
	which leads to  
	\begin{align*}
		\Phi_{n+1}(\mys;p,q) - \Phi_n(\mys;p,q) 
		&= \frac{   {\mathcal C}_{n+1,p,q}^{3,\mys} {\mathcal C}_{n,p,q}^{4,\mys} - {\mathcal C}_{n,p,q}^{3,\mys} }{ (\mys-p)(1+2p)(\mys-q)(1+2q) } \varphi_n(\mys;p,q) 
		\\
		&= \frac{  - {\mathcal C}_{n,p,q}^{1,\mys} - {\mathcal C}_{n,p,q}^{2,\mys} }{ (\mys-p)(1+2p)(\mys-q)(1+2q) } \varphi_n(\mys;p,q) 
		= - \phi_n (\mys;p,q).
	\end{align*}

\end{proof}

\begin{lemma}\label{lem:sumphi}
	For any $\mys\in \widehat{\mathbb R}$, the functions $\{\phi_n(\mys;p,q)\}$ defined in \cref{Def_phi} satisfy 
	\begin{equation}\label{sum_phin}
		\sum_{n=0}^{\infty} \phi_n(\mys;p,q) =  \sum_{n=0}^{p-1} \phi_n(\mys;p,q) =1\qquad  \forall p,q \in \mathbb Z^+.
	\end{equation}
\end{lemma}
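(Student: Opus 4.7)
The proof is essentially a telescoping argument that combines the three properties \eqref{Phi0}, \eqref{PhiN}, and \eqref{PhiN+1} established in the preceding lemma.

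The plan is to first observe that the infinite sum actually terminates. By \eqref{eq:ngep}, we have $\phi_n(\mys;p,q) = 0$ for all $n \ge p$, since the factor $\varphi_n(\mys;p,q)$ contains $(1-p)_n$ which vanishes whenever $n \ge p$. This immediately yields the equality $\sum_{n=0}^{\infty} \phi_n(\mys;p,q) = \sum_{n=0}^{p-1} \phi_n(\mys;p,q)$.

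Next, I would rewrite $\phi_n$ using \eqref{PhiN+1} as $\phi_n(\mys;p,q) = \Phi_n(\mys;p,q) - \Phi_{n+1}(\mys;p,q)$, and sum over $n=0,1,\dots,p-1$. The sum telescopes to $\Phi_0(\mys;p,q) - \Phi_p(\mys;p,q)$. Invoking \eqref{Phi0} gives $\Phi_0(\mys;p,q) = 1$, and invoking \eqref{PhiN} with $n=p$ gives $\Phi_p(\mys;p,q) = 0$. Combining these yields $\sum_{n=0}^{p-1} \phi_n(\mys;p,q) = 1$, which completes the proof.

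There is no real obstacle here: all the heavy lifting has already been done in establishing the three identities \eqref{Phi0}--\eqref{PhiN+1} for the auxiliary functions $\Phi_n(\mys;p,q)$. The present lemma is the clean summation payoff, which extracts the identity $\sum_n \phi_n = 1$ from a discrete antiderivative structure in $n$. The only point worth double-checking is that all expressions remain well-defined for $\mys \in \widehat{\mathbb R}$, which is guaranteed since $\widehat{\mathbb R}$ excludes precisely the half-integers where the Pochhammer denominators in $\varphi_n$ and $\phi_n$ could vanish.
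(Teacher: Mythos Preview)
Your proposal is correct and follows essentially the same approach as the paper: both use \eqref{eq:ngep} to truncate the series to a finite sum, then telescope via \eqref{PhiN+1} and evaluate the endpoints with \eqref{Phi0} and \eqref{PhiN}.
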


\begin{proof}
	Recall that we have proven in \eqref{eq:ngep} that  $\phi_n(\mys;p,q) = 0$ for all $n\ge p$. Thus  
	the series \eqref{sum_phin} contains only finite sums. 
	This fact, together with \eqref{Phi0}--\eqref{PhiN+1}, implies that 
	\begin{equation*}
		\sum_{n=0}^{\infty} \phi_n(\mys;p,q)  =  \sum_{n=0}^{p-1} \phi_n(\mys;p,q)  
		=  - \Phi_{p}(\mys;p,q) + \Phi_{0}(\mys;p,q) = - 0 + 1 =1.
	\end{equation*}

\end{proof}

Combining the results in \cref{lem:alpha_beta_sigma,lem:sumphi}, we obtain the following crucial identity \eqref{sum_alpha-beta}. It is worth noting that the discovery of this identity \eqref{sum_alpha-beta} is highly nontrivial and become the key to proving \cref{prop:key-cholesky}.

\begin{lemma}\label{lem:sum_alpha-beta}
	For any $\mys\in \widehat{\mathbb R}$, we have 
	\begin{equation}\label{sum_alpha-beta}
		\sum_{i=1}^{\infty} \myalpha_{i,p}^{(\mys)} \myalpha_{i,q+1}^{(\mys)} 
		+ \sum_{i=1}^{\infty} \myalpha_{i,p+1}^{(\mys)} \myalpha_{i,q}^{(\mys)} = 2 \mybeta_p^{(\mys)} \mybeta_q^{(\mys)} \qquad 
		\forall p,q \in \mathbb Z^+,~ p \equiv q+1\pmod{2}. 
	\end{equation}
	Note the series in \eqref{sum_alpha-beta} is actually {\bf finite} sums, since 
	$\myalpha_{i,j}^{(\mys)}=0$ when $i>j$ by definition \eqref{eq:Defalpha}.  
\end{lemma}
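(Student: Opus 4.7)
\textbf{Proof plan for \cref{lem:sum_alpha-beta}.} The strategy is to use the parity constraint to reduce the two finite sums on the left-hand side of \cref{sum_alpha-beta} to an expression that matches \emph{exactly} the left-hand side of the identity established in \cref{lem:alpha_beta_sigma}, and then collapse the resulting series via \cref{lem:sumphi}. Both \cref{lem:alpha_beta_sigma} and \cref{lem:sumphi} are already proved in the text, so the real work is a careful parity bookkeeping; I do not foresee any serious analytic obstacle.

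First I would observe that \cref{eq:Defalpha} forces $\myalpha_{i,j}^{(\mys)}=0$ unless $i\equiv j\pmod 2$. The hypothesis $p\equiv q+1\pmod 2$ means exactly one of $p,q$ is even; without loss of generality (the other case being symmetric) assume $p$ is odd and $q$ is even, and write $p=2P-1$, $q=2Q$ with $P\in\mathbb Z^+$ and $Q\in\mathbb N$. Then in the first sum $\sum_i \myalpha_{i,p}^{(\mys)}\myalpha_{i,q+1}^{(\mys)}$ only odd indices $i=2k-1$ survive, while in the second sum $\sum_i \myalpha_{i,p+1}^{(\mys)}\myalpha_{i,q}^{(\mys)}$ only even indices $i=2k$ survive. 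Thus the left-hand side of \cref{sum_alpha-beta} can be regrouped termwise as
\begin{equation*}
\sum_{k=1}^{\infty}\Bigl( \myalpha_{2k-1,2P-1}^{(\mys)}\myalpha_{2k-1,2Q+1}^{(\mys)} + \myalpha_{2k,2P}^{(\mys)}\myalpha_{2k,2Q}^{(\mys)} \Bigr).
\end{equation*}

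Next I would invoke \cref{lem:alpha_beta_sigma} with $(i,p,q)\mapsto(k,P,Q)$ to identify each summand with $2\,\mybeta_{2P-1}^{(\mys)}\mybeta_{2Q}^{(\mys)}\,\phi_{k-1}(\mys;P,Q)$. The left-hand side of \cref{sum_alpha-beta} therefore equals
\begin{equation*}
2\,\mybeta_{2P-1}^{(\mys)}\mybeta_{2Q}^{(\mys)}\sum_{n=0}^{\infty}\phi_n(\mys;P,Q) = 2\,\mybeta_p^{(\mys)}\mybeta_q^{(\mys)},
\end{equation*}
where in the last step \cref{lem:sumphi} was applied. The complementary case in which $p$ is even and $q$ is odd is handled identically by swapping the roles of $p,q$: writing $p=2P$, $q=2Q-1$ and again sorting the summands by parity of $i$, \cref{lem:alpha_beta_sigma} with $(i,p,q)\mapsto(k,Q,P)$ reduces the sum to $2\,\mybeta_{2Q-1}^{(\mys)}\mybeta_{2P}^{(\mys)}\sum_n\phi_n(\mys;Q,P)=2\,\mybeta_q^{(\mys)}\mybeta_p^{(\mys)}$, again by \cref{lem:sumphi}.

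The only subtleties are ensuring that the splitting exhausts all nonzero contributions (which follows from the parity rule in \cref{eq:Defalpha} combined with the hypothesis $p\equiv q+1\pmod 2$) and that the series are genuinely finite, so no convergence issue arises. The latter is guaranteed by \cref{eq:Defalpha}, which gives $\myalpha_{i,j}^{(\mys)}=0$ for $i>j$, bounding the summation index by $\max\{p+1,q+1\}$. With these parity considerations settled, the proof is a direct composition of the two preceding lemmas; the genuine difficulty lies not here but in establishing \cref{lem:alpha_beta_sigma} and the telescoping identity \cref{PhiN+1} that underpins \cref{lem:sumphi}.
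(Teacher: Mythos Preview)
Your proposal is correct and follows essentially the same approach as the paper: exploit the parity constraint in \cref{eq:Defalpha} to reduce the two sums to the form treated in \cref{lem:alpha_beta_sigma}, then collapse via \cref{lem:sumphi}. The only minor imprecision is writing $Q\in\mathbb N$ rather than $Q\in\mathbb Z^+$ (since $q\in\mathbb Z^+$ and $q$ even forces $q\ge 2$, hence $Q\ge 1$), which matters because \cref{lem:alpha_beta_sigma} and \cref{lem:sumphi} are stated for positive integer arguments; this does not affect the argument.
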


\begin{proof}
	Observing that $p$ and $q$ are symmetric in \eqref{sum_alpha-beta} and $p \equiv q+1\pmod{2}$,    
	we assume, without loss of generality, that $p$ is odd and $q$ is even (otherwise, we can simply exchange $p$ and $q$), 
	and denote  
	$$p = 2 \widehat p - 1, \quad q = 2 \widehat q \qquad  \mbox{with} ~~\widehat p, \widehat q \in \mathbb Z^+.$$ 
	According to definition \eqref{eq:Defalpha},
	$\myalpha_{i,p}^{(\mys)}=0$ if $i$ is even, and $\myalpha_{i,q}^{(\mys)}=0$ if $i$ is odd. 
	Thus 
	\begin{equation} \label{eq:12882}
		\sum_{i=1}^{\infty} \myalpha_{i,p}^{(\mys)} \myalpha_{i,q+1}^{(\mys)} 
		+ \sum_{i=1}^{\infty} \myalpha_{i,p+1}^{(\mys)} \myalpha_{i,q}^{(\mys)} 
		= 	\sum_{i=1}^{\infty} \myalpha_{2i-1, 2 \widehat p - 1 }^{(\mys)} \myalpha_{2i-1,2 \widehat q+1}^{(\mys)} 
		+ \sum_{i=1}^{\infty} \myalpha_{2i,2 \widehat p}^{(\mys)} \myalpha_{2i,2 \widehat q}^{(\mys)}. 
	\end{equation}
	It follows from \cref{lem:alpha_beta_sigma,lem:sumphi} that 
	\begin{align*}
		\sum_{i=1}^{\infty} \myalpha_{2i-1, 2 \widehat p - 1 }^{(\mys)} \myalpha_{2i-1,2 \widehat q+1}^{(\mys)} 
		+ \sum_{i=1}^{\infty} \myalpha_{2i,2 \widehat p}^{(\mys)} \myalpha_{2i,2 \widehat q}^{(\mys)} & \stackrel{\eqref{eq:alpha_beta_sigma}}{=} 
		2 \sum_{i=1}^{\infty} \mybeta_{2 \widehat p-1}^{(\mys)} \mybeta_{2 \widehat q}^{(\mys)} \phi_{i-1}(\mys; \widehat p, \widehat q) 
		 \stackrel{\eqref{sum_phin}}{=} 
		2 \mybeta_{2 \widehat p-1}^{(\mys)} \mybeta_{2 \widehat q}^{(\mys)} 
		= 2 \mybeta_{p}^{(\mys)} \mybeta_{q}^{(\mys)}, 
	\end{align*}
	which along with \eqref{eq:12882} yields \eqref{sum_alpha-beta}.  
	The proof is completed. 
\end{proof}

\subsection{Proof of \cref{prop:key-cholesky}}\label{sec:cholesky-proof}

\begin{proof}
	Note that $\gamma_{p,q}=\gamma_{q,p}$, so that $p$ and $q$ are symmetric in \eqref{eq:key-cholesky}.  
	Without loss of generality, we assume in the following proof that $p \le q$. 
	The proof is divided into three parts. 
	
	{\tt (\romannumeral1) Prove \eqref{eq:key-cholesky} for $p\not\equiv q \pmod{2}$.} In this case, $(-1)^{p-1} + (-1)^{q-1} =0$, and thus $\gamma_{p-1,q-1}^{(s)}=0$.  By \eqref{eq:Defalpha}, we know for any given $i \in \mathbb Z^+$ that either $\myalpha_{i,p}^{(\mys)}=0$ or $\myalpha_{i,q}^{(\mys)}=0$. Therefore,  $\sum_{i=1}^{\min\{p,q\}} \myalpha_{i,p}^{(\mys)} \myalpha_{i,q}^{(\mys)} =0 = -\gamma_{p-1,q-1}$.

	{\tt (\romannumeral2) Prove \eqref{eq:key-cholesky} for the special case $q\ge p=1$ and $p\equiv q \pmod{2}$}, namely,
	\begin{equation}\label{case:p=1}
		\sum_{i=1}^{\min\{1,q\}} \myalpha_{i,1}^{(\mys)} \myalpha_{i,q}^{(\mys)} = -\gamma_{0,q-1}^{(\mys)} \qquad \forall q\ge 1,~~p\equiv q \pmod{2},
	\end{equation}
	where the left-hand-side term is  $\myalpha_{1,1}^{(\mys)} \myalpha_{1,q}^{(\mys)}$, and the right-hand-side term is 
	$-\gamma_{0,q-1}^{(\mys)}= 2 \mybeta_0^{(\mys)} \mybeta_q^{(\mys)}$ by \eqref{eq:Defg}. 
	Using \eqref{eq:AB2} and noting $q$ is odd in this case, we have $\myalpha_{1,1}^{(\mys)} \myalpha_{1,q}^{(\mys)} = 4 \theta_1^{(\mys)}\theta_q^{(s)} = 2 \mybeta_0^{(\mys)} \mybeta_q^{(\mys)}$. Hence \eqref{case:p=1} holds. 

	{\tt (\romannumeral3) Prove \eqref{eq:key-cholesky}  for $q \ge p > 1$ and $p\equiv q \pmod{2}$.}  
	Since $\myalpha_{i,j} = 0$ when $i>j$, 
	we can rewrite 
	\begin{equation}\label{eq:WKL3903}
		\sum_{i=1}^{\min\{p,q\}} \myalpha_{i,p}^{(\mys)} \myalpha_{i,q}^{(\mys)} = \sum_{i=1}^{\infty} \myalpha_{i,p}^{(\mys)} \myalpha_{i,q}^{(\mys)}.
	\end{equation}
	We first give the following technical splittings (note all the series below are actually finite sums): 
	\begin{align*}
		\sum_{i=1}^{\infty} \myalpha_{i,p}^{(\mys)} \myalpha_{i,q}^{(\mys)}  
		& =  \sum_{k=0}^{p-1} (-1)^{k} 
		\sum_{i=1}^{\infty} \myalpha_{i,p-k}^{(\mys)} \myalpha_{i,q+k}^{(\mys)} 
		- \sum_{k=1}^{p-1} (-1)^{k} 
		\sum_{i=1}^{\infty} \myalpha_{i,p-k}^{(\mys)} \myalpha_{i,q+k}^{(\mys)} 
		\\
		& =  \sum_{k=1}^{p} (-1)^{k-1} 
		\sum_{i=1}^{\infty} \myalpha_{i,p-k+1}^{(\mys)} \myalpha_{i,q+k-1}^{(\mys)} 
		+ \sum_{k=1}^{p-1} (-1)^{k-1} 
		\sum_{i=1}^{\infty} \myalpha_{i,p-k}^{(\mys)} \myalpha_{i,q+k}^{(\mys)}  	
		\\
		& = \sum_{k=1}^{p-1} (-1)^{k-1} \left( \sum_{i=1}^{\infty} \myalpha_{i,p-k+1}^{(\mys)} \myalpha_{i,q+k-1}^{(\mys)}  + 
		\sum_{i=1}^{\infty} \myalpha_{i,p-k}^{(\mys)} \myalpha_{i,q+k}^{(\mys)} \right)  +  (-1)^{p-1} 
		\sum_{i=1}^{\infty} \myalpha_{i,1}^{(\mys)} \myalpha_{i,q+p-1}^{(\mys)}.
	\end{align*}
	Applying \cref{lem:sum_alpha-beta} with $\tilde p = p-k \in \mathbb Z^+$, $\tilde q = q+k-1 \in \mathbb Z^+$, and $\tilde p \equiv \tilde q+1\pmod{2}$, 
	we get 
	$$
	\sum_{i=1}^{\infty} \myalpha_{i,p-k}^{(\mys)} \myalpha_{i,q+k}^{(\mys)} 
	+ \sum_{i=1}^{\infty} \myalpha_{i,p-k+1}^{(\mys)} \myalpha_{i,q+k-1}^{(\mys)} = 2 \mybeta_{p-k}^{(\mys)} \mybeta_{q+k-1}^{(\mys)}.
	$$
	Therefore, 
	\begin{align*}
		\sum_{i=1}^{\infty} \myalpha_{i,p}^{(\mys)} \myalpha_{i,q}^{(\mys)}  
		& = \sum_{k=1}^{p-1} (-1)^{k-1} \left( 2 \mybeta_{p-k}^{(\mys)} \mybeta_{q+k-1}^{(\mys)} \right) 
		+  (-1)^{p-1} 
		\sum_{i=1}^{\infty} \myalpha_{i,1}^{(\mys)} \myalpha_{i,q+p-1}^{(\mys)}
		\\
		& \stackrel{\eqref{case:p=1}}{=}  \sum_{k=1}^{p-1} (-1)^{k-1} \left( 2 \mybeta_{p-k}^{(\mys)} \mybeta_{q+k-1}^{(\mys)} \right) 
		+  (-1)^{p-1} \left( 2 \mybeta_0^{(\mys)} \mybeta_{q+p-1}^{(\mys)} \right)
		\\
		& = 2 \sum_{k=1}^{p} (-1)^{k-1}  \mybeta_{p-k}^{(\mys)} \mybeta_{q+k-1}^{(\mys)} 
		= 2 \sum_{j=0}^{p-1} (-1)^{p-j-1}  \mybeta_{j}^{(\mys)} \mybeta_{p+q-j-1}^{(\mys)} 
		\\
		&= 2 (-1)^{p-1} \sum_{j=0}^{p-1} (-1)^{j}  \mybeta_{j}^{(\mys)} \mybeta_{p+q-j-1}^{(\mys)} = -\gamma_{p-1,q-1}^{(\mys)}.
	\end{align*}
	This together with \cref{eq:WKL3903} completes the proof of \cref{prop:key-cholesky}. 
\end{proof}
%

\section{Numerical results}\label{sec:num}
This section gives a few numerical examples to confirm the theoretical results. 

\begin{example}\label{ex1}
	The first example considers a linear seminegative system from \cite{sun2017rk4}: 
	$$
	\frac{d}{dt} u = L u, \quad u = u(t) \in L^2( [0,T], \mathbb R^3 ), \quad L = - \begin{pmatrix}
		1 & 2 & 2 \\
		0 & 1 & 2 \\
		0 & 0 & 1
	\end{pmatrix}.
	$$
	The $(s,s)$ diagonal Pad\'e approximations with $s=3$ and $s=4$ are used to solve this system 
	with an arbitrarily chosen initial condition $u(0)=(0.9134,0.2785,0.5469)^\top$ up to $t=8$. 
	In order to verify the convergence, we run the simulations with  
	different time stepsizes $\tau \in \{1.6, 0.8, 0.4, 0.2\}$.  
	The $l^2$-errors the numerical solutions and the energy dissipation accuracy (see \cref{rem:DE_accuracy} for the definition) are listed in 
	\cref{tab:acc2}. 
	We observe the convergence rate of $2s$ for the $(s,s)$ diagonal Pad\'e approximation, as expected. 
	We also plot the energy dissipation magnitudes $\nm{u^n}^2 - \nm{u^{n+1}}^2$ over time in \cref{fig:EnergyEx1}. 
	One can observe that $\nm{u^n}^2 - \nm{u^{n+1}}^2$ is always positive, which indicates the energy decay property as expected from the unconditionally strong stability in \cref{thm:pade-ed-series}. 
	Moreover, the numerical energy dissipation magnitudes agree well with the theoretical ones, 
	which further confirms the correctness of our energy identity \cref{eq:pade-ed-series}. 
\end{example}

\begin{table}[htbp]\label{tab:acc2}
	\centering
	\captionsetup{belowskip=-10pt}
	\caption{\small The 
		$l^2$-errors and energy dissipation accuracy $\Delta E$ at $t=8$, and the corresponding convergence rates for
		the $(s,s)$ diagonal Pad\'e approximations.
	}
	\label{tab:order2}
	\begin{tabular}{c|c|c|c|c|c|c|c|c}
		\hline
		\multirow{2}{8pt}{$\tau$}
		&\multicolumn{4}{c|}{$s=3$}
		&\multicolumn{4}{c}{$s=4$} 
		\\
		\cline{2-9}
		& $l^2$ error& order & $\Delta E$ & order & $l^2$ error& order & $\Delta E$ & order \\
		\hline
		$1.6$& 3.56e-6 & --          &1.35e-7& --   &  2.77e-8 & -- & 1.07e-9  &-- \\
		$0.8$& 5.25e-8 & 6.09   & 1.98e-9&    6.09 & 1.12e-10 &  7.96 & 4.34e-12 & 7.95 \\
		$0.4$& 8.07e-10 & 6.02   & 3.05e-11&  6.02 & 4.39e-13 & 7.99  & 1.71e-14 & 7.99 \\
		$0.2$& 1.26e-11 & 6.01   & 4.74e-13&  6.01 & 1.64e-15 & 8.07 & 6.36e-17 & 8.07\\
		\hline
	\end{tabular}
\end{table}

\begin{example}\label{ex2}
	This example investigates the following seminegative ODE system
\begin{equation}\label{keyex2}
		\frac{d}{dt} u = L u, \quad u = u(t) \in L^2( [0,T], \mathbb R^{2 N_d} ), \quad L =  \frac1{\Delta x} \begin{pmatrix}
		{\bf L}_1 & \sqrt{3} {\bf L}_1 
		\\
		\sqrt{3} ( 2 {\bf I}_{N_d} -  {\bf L}_2) & -3 {\bf L}_2
	\end{pmatrix}
\end{equation}	
	with 
\begin{equation}\label{eq:L1L2}
		{\bf L}_1 :=
	\begin{pmatrix}
		-1 &  &  & 1 \\
		1 & \ddots &  &   \\
		 & \ddots & \ddots &  \\
		 &  & 1 & -1 
	\end{pmatrix}, \qquad 
	{\bf L}_2 :=
\begin{pmatrix}
	1 &  &  & 1 \\
	1 & \ddots &  &   \\
	& \ddots & \ddots &  \\
	&  & 1 & 1 
\end{pmatrix}.
\end{equation}
	This system arises from the piecewise linear (${\mathbb P}^1$-based) discontinuous Galerkin discretization \cite{CockburnShu1998} of the linear convection PDE $\psi_t + \psi_x=0$ 
	in the spatial domain $[0,1]$ with the uniform mesh of $N_d=20$ cells (i.e., $\Delta x=1/N_d=0.05$) and periodic boundary conditions. The initial solution is taken as $\psi(x,0)=\sin(2\pi x)$. 
	We solve the semi-discrete ODE system \cref{keyex2} in time up to $t=4$ by using the $(2,2)$ 
	diagonal Pad\'e approximation. Due to its unconditional strong stability (\cref{thm:pade-ed-series}), 
	a large time stepsize $\tau = 0.1$ is used and works robustly. 
	The energy dissipation information shown in \cref{fig:EnergyEx2} further validates our theoretical 
	energy laws \cref{eq:pade-ed-series} and stability analysis.
\end{example}


\begin{figure}[htbp]
	\centering
	\begin{subfigure}[b]{0.3\textwidth}
		\centering
		\includegraphics[width=\textwidth]{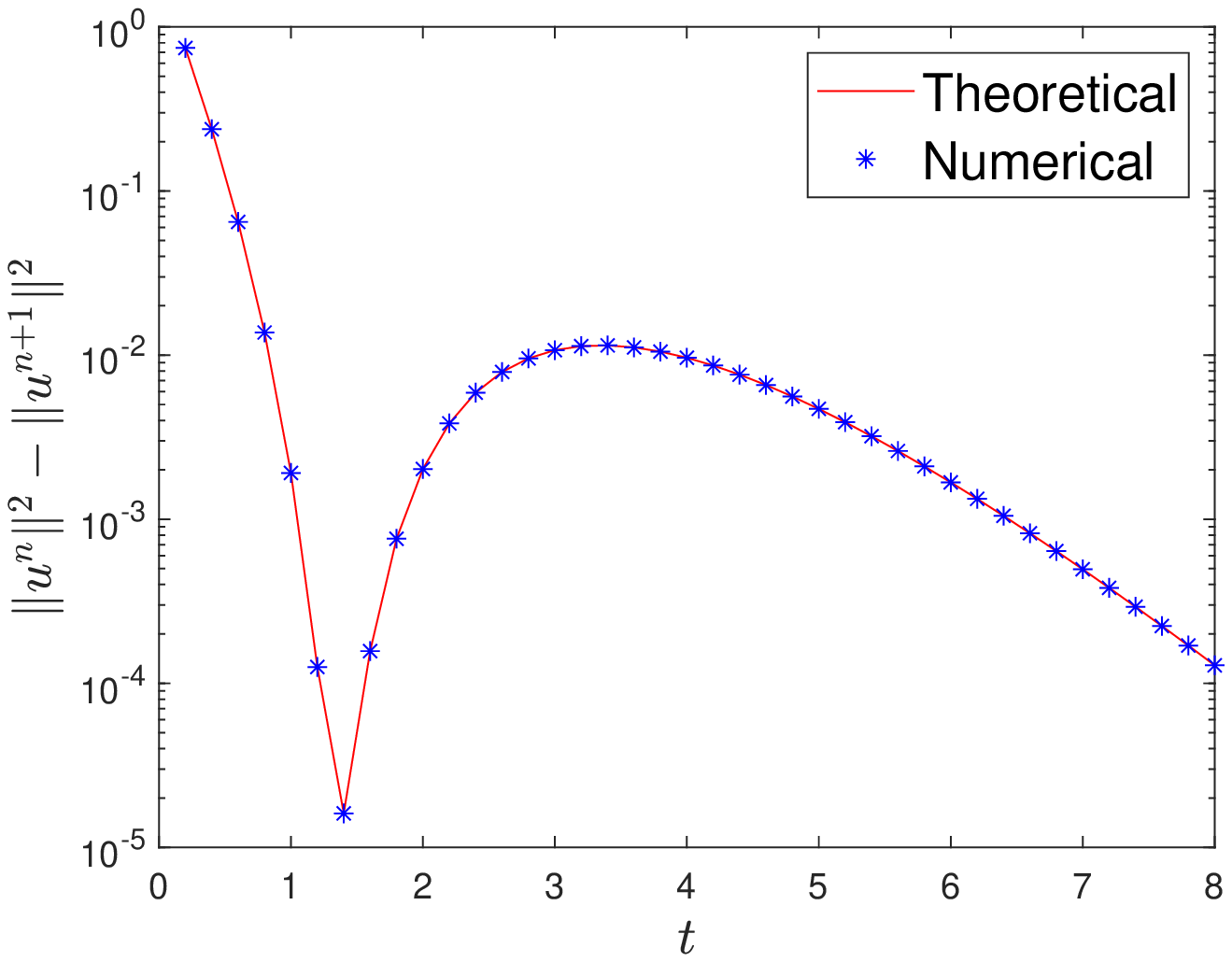}
		\captionsetup{belowskip=-7pt}
		\caption{\cref{ex1}}
		\label{fig:EnergyEx1}
	\end{subfigure}
	\hfill
	\begin{subfigure}[b]{0.3\textwidth}
		\centering
		\includegraphics[width=\textwidth]{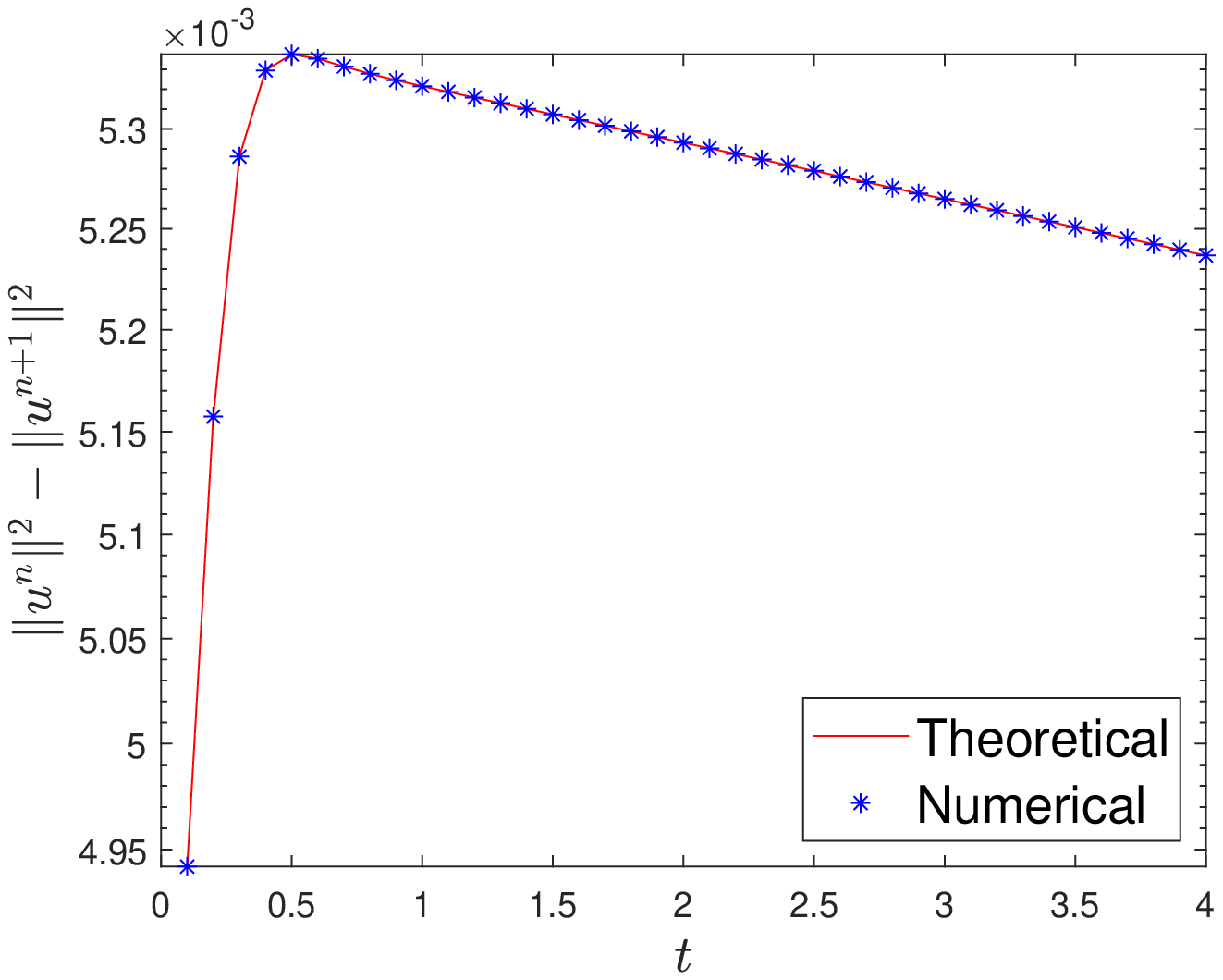}
		\captionsetup{belowskip=-7pt}
		\caption{\cref{ex2}}
		\label{fig:EnergyEx2}
	\end{subfigure}
	\hfill
	\begin{subfigure}[b]{0.3\textwidth}
		\centering
		\includegraphics[width=\textwidth]{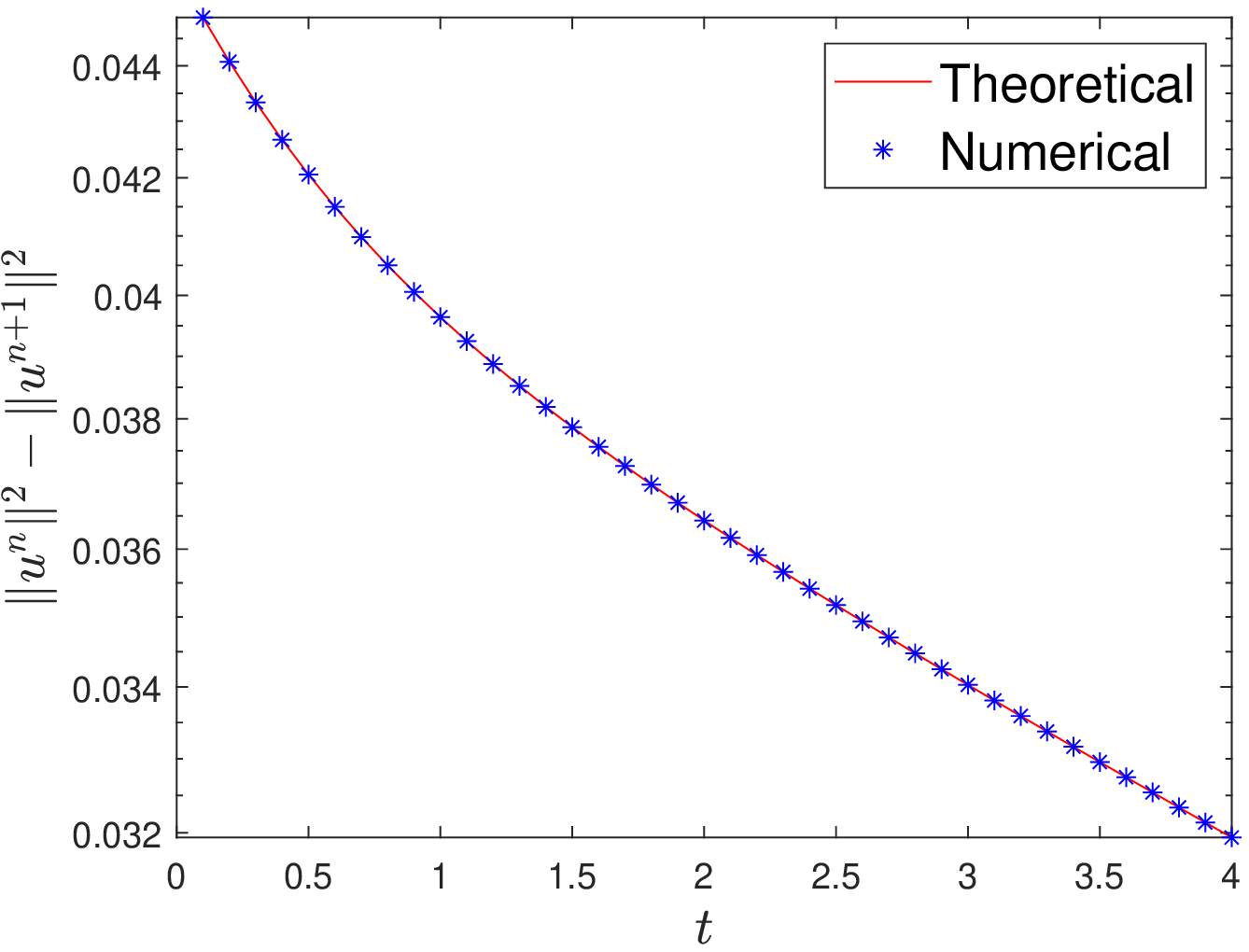}
		\captionsetup{belowskip=-7pt}
		\caption{\cref{ex3}}
		\label{fig:EnergyEx3}
	\end{subfigure}
	\captionsetup{belowskip=-6pt}
	\caption{\small Numerical energy dissipation magnitudes and the theoretical ones given by the energy identity \cref{eq:pade-ed-series}.}
	\label{fig:Energy}
\end{figure}

\begin{example}\label{ex3}
	In this example, we study the following seminegative ODE system
\begin{equation}\label{keyex3}
		\frac{d}{dt} u = L u, \quad u = u(t) \in L^2( [0,T], \mathbb R^{N_d} ), \quad L =  \frac1{\Delta x^3} {\bf L}_1 {\bf L}_1^\top {\bf L}_1^\top 
\end{equation}
	with the matrix ${\bf L}_1$ defined by \cref{eq:L1L2}. 
	This system comes from the piecewise constant (${\mathbb P}^0$-based) local discontinuous Galerkin discretization of the dispersion PDE $\psi_t + \psi_{xxx}=0$ 
	in the spatial domain $[0,1]$ with the uniform mesh of $N_d=20$ cells (i.e., $\Delta x=1/N_d=0.05$) and periodic boundary conditions. 
	The initial solution is taken as $\psi(x,0)=\cos(2\pi x)$. 
	We solve the semi-discrete ODE system \cref{keyex3} in time up to $t=4$ by using the $(2,2)$ 
	diagonal Pad\'e approximation. 
	The unconditional stability proved in \cref{thm:pade-ed-series} allows us to 
	use a much larger time stepsize $\tau = 0.1$, which is not restricted 
	by the normal CFL condition $\Delta t \le C \Delta x^3$ for an explicit time discretization such system \cref{keyex3}. 
	\cref{fig:EnergyEx3} displays the energy dissipation behavior, which is consistent with our theoretical  analysis.
\end{example}

\section{Conclusions}\label{sec:conclusions}
We have established a systematic theoretical framework to 
derive the discrete energy laws 
of {\em general implicit and explicit} Runge--Kutta (RK) methods for linear seminegative systems.  
The framework is motivated by a discrete analogue of integration by parts technique 
and a series expansion of the continuous energy law. 
The established discrete energy laws show a precise characterization
on whether and how the energy dissipates in the RK discretization, 
thereby giving stability criteria of RK methods. 
We have also found a unified discrete energy law for all the diagonal Pad\'e approximations,  
based on analytically constructing the Cholesky type decomposition of 
a class of symmetric matrices, whose structure is highly complicated. 
The discovery of the unified energy law and the proof of the decomposition
are very nontrivial.  
For the diagonal Pad\'e approximations, our analyses have bridged   
the continuous and discrete energy laws, enhancing our understanding of their intrinsic mechanisms. 
We have provided 
several specific examples of implicit methods to illustrate the discrete energy laws. 
 A few numerical examples have also been given to confirm the theoretical properties. 
In this paper, we have developed new analysis techniques, with 
construction of technical 
combinatorial identities and the theory of hypergeometric series, which were rarely used in previous RK stability analyses and  may motivate   
future developments in this field.

\appendix

\section{Proof of  \cref{lem:quadraticform}} \label{sec:proof:quadraticform}

\begin{proof}
	For any $v\in V$, we have
	\begin{align*}
	& \sum_{i = 0}^\myN	\sum_{j = 0}^\myN \gamma_{i,j}\tau^{i+j+1} \qip{L^i v, L^j v}
		= - 	\sum_{i = 0}^\myN	\sum_{j = 0}^\myN  \left(\sum_{k = 0}^{\myN}\mu_{k,i} \mylambda_k \mu_{k,j}\right) \tau^{i+j+1} \qip{L^i v, L^j v}\\ 
		=& -\sum_{k = 0}^\myN \mylambda_k \tau \left( 	\sum_{i = 0}^\myN	\sum_{j = 0}^\myN  \mu_{k,i}  \mu_{k,j} \tau^{i+j} \qip{L^i v, L^j v }\right)
		= -\sum_{k = 0}^\myN \mylambda_k \tau \qip{\sum_{i = k}^\myN \tau^i \mu_{k,i} L^i v,  \sum_{j=k}^\myN\mu_{k,j}\tau^jL^j v }\\
		=& -\sum_{k = 0}^\myN \mylambda_k \tau \qnm{\sum_{i = k}^\myN \tau^j \mu_{k,j} L^j v}^2
		= -\sum_{k = 0}^\myN \mylambda_k \tau^{2k+1} \qnm{L^k\left(\sum_{j = k}^\myN \mu_{k,j}(\tau L)^{j-k}\right)  v}^2.
	\end{align*}
	
\end{proof}

\section{Proof of \cref{lem:cholHilbert}}\label{sec:proof:cholHilbert}

\begin{proof}
	Observe that
	$
		\pmb{\widehat{\Upsilon}} = - \bfmyLambda_0 \mathbf{H} \bfmyLambda_0, 
	$
	where $\bfmyLambda_0 = \diag(\{\mylambda_i\}_{i=0}^\myN)$ with $\mylambda_i := 1/i!$, and $\mathbf{H} = (h_{i,j})_{i,j=0}^\myN$ is the Hilbert matrix with $h_{i,j} := {1}/{(i+j+1)}$. The Cholesky decomposition of the Hilbert matrix $\mathbf{H}$ gives 
	$
		\mathbf{{H}} = \mathbf{{U}}_H^\top \bfmyLambda_H \mathbf{{U}}_H,
	$ 
	where the formulae of $\mathbf{U}_H$ and $\bfmyLambda_H$ were given in \cite[Section 2]{hitotumatu1988cholesky} and also studied in \cite[Lemma 2]{hairer1981algebraically}. Therefore, we have
	\begin{equation*}
		\pmb{\widehat{\Upsilon}} = - \bfmyLambda_0 \mathbf{U}_H^\top \bfmyLambda_H \mathbf{U}_H \bfmyLambda_0 = - \left(\bfmyLambda_0^{-1} \mathbf{U}_H\bfmyLambda_0\right)^\top \left(\bfmyLambda_0 \bfmyLambda_H\bfmyLambda_0\right)\left(\bfmyLambda_0^{-1} \mathbf{U}_H \bfmyLambda_0\right). 
	\end{equation*}
	Taking $\mathbf{\widehat{U}} = \bfmyLambda_0^{-1} \mathbf{U}_H \bfmyLambda_0$ and $\bfhatmyLambda = \bfmyLambda_0 \bfmyLambda_H\bfmyLambda_0$ with the formulae of $\mathbf{U}_H$ and $\bfmyLambda_H$ from \cite[Section 2]{hitotumatu1988cholesky}, we obtain \cref{eq:lambdamu_hat} and complete the proof.
\end{proof}

\section{Proof of \cref{thm:ode-ed-series}}\label{sec:proof:ode-ed-series}

\begin{proof}

Because $\sum_{i=0}^\infty \nm{ \frac{1}{i!}({\tau} L)^i u(t^n) } \le \sum_{i=0}^\infty  \frac{1}{i!}({\tau} \nm{ L} )^i \nm{ u(t^n) } \le  e^{\tau \nm{ L} } \nm{ u(t^n) } \le  e^{ T \nm{ L} } \nm{ u(t^n) } < \infty$, we known that the series 
$\sum_{i =0}^{\infty} \frac{1}{i!}({\tau} L)^i u(t^n)$ converges. 
This implies that $v(t^n+\tau) :=  \sum_{i =0}^{\infty} \frac{1}{i!}({\tau} L)^i u(t^n)$ is well-defined. We can verify that $\frac{d}{d \tau} v = L v $. By the uniqueness of the solution to \cref{eq-odes}, 
we get $u(t^n+\tau) = v(t^n+\tau) = \sum_{i =0}^{\infty} \frac{1}{i!}({\tau} L)^i u(t^n)$. 
 Define
$ 
u_\myN(t^n+{\tau}) 
:= \sum_{i =0}^{\myN} \frac{1}{i!}({\tau} L)^i u(t^n). 
$ 
As $N \to \infty$, we have $\nm{u_N-u} \to 0$ and thus $\qnm{u_N}\to \qnm{u}$. 
It then follows from \cref{eq:ode-ed} that 
\begin{equation}\label{eq:ode-ex-2}
		\nm{u(t^{n+1})}^2 - \nm{u(t^n)}^2 = -\int_0^\tau \qnm{u(t^n+\widehat{\tau})}^2 \mathrm{d}\widehat{\tau}
		= -  \int_0^\tau \lim_{\myN\to \infty}\qnm{u_\myN(t^n+\widehat{\tau})}^2 \mathrm{d}\widehat{\tau}.
\end{equation}
Using the inequality $\qnm{u}^2 \le 2 \nm{L} \nm{u}^2$, we deduce that 
\begin{equation}\label{eq:WKL3112}
\qnm{u_\myN(t^n+\widehat{\tau})}^2 \le  2 \nm{L}  \nm{u_\myN(t^n+\widehat{\tau})}^2 \le 2\nm{L}\left(\sum_{i = 0}^N \frac{1}{i!}\widehat{\tau}^i\nm{ L}^i\right)^2 \nm{u(t^n)}^2	 \le 2 \nm{L} e^{2\widehat{\tau}\nm{L}}\nm{u(t^n)}^2.
\end{equation}
Thanks to the {\em dominated convergence theorem}, the estimate \cref{eq:WKL3112} along with 
$
	\int_0^\tau  2 \nm{L} e^{2\widehat{\tau}\nm{L}}\nm{u(t^n)}^2 \mathrm{d}\widehat{\tau}  = (e^{2\tau \nm{L}}-1)\nm{u(t^n)}^2<\infty  
$
implies 
$\int_0^\tau \lim_{\myN\to \infty}\qnm{u_\myN(t+\widehat{\tau})}^2 \mathrm{d}\widehat{\tau}
= \lim_{\myN\to \infty} \int_0^\tau \qnm{u_\myN(t+\widehat{\tau})}^2 \mathrm{d}\widehat{\tau}.$ 
Combining it with \cref{eq:ode-ex-2} gives 
\begin{equation}\label{eq:ode-ex-3}
	\nm{u(t^{n+1})}^2 - \nm{u(t^n)}^2 = -\lim_{\myN\to \infty} \int_0^\tau \qnm{u_\myN(t+\widehat{\tau})}^2 \mathrm{d}\widehat{\tau}.
\end{equation}
On the other hand, we can reformulate the integration \cref{eq:ode-ex-3} as follows 
\begin{align}\nonumber
		 \int_0^\tau \qnm{u_\myN(t^n+\widehat{\tau})}^2 d\widehat{\tau} 
		&= \int_0^\tau \qip{\sum_{i =0}^{\myN} \frac{1}{i!}(\widehat{\tau} L)^i u(t^n),  \sum_{j =0}^{\myN} \frac{1}{j!}(\widehat{\tau} L)^j u(t^n)} d\widehat{\tau} 
		\\ \nonumber
		&=  \sum_{i,j = 0}^{\myN} \left(\int_{0}^\tau\frac{\widehat{\tau}^{i+j}}{i!j!} \mathrm{d}\widehat{\tau}\right) \qip{L^i u(t^n), L^j u(t^n)}
		\\  
		&
		= \sum_{i,j = 0}^{\myN}\frac{\widehat{\tau}^{i+j+1}}{i!j!(i+j+1)} \qip{L^i u(t^n), L^j u(t^n)}
		=  \sum_{k = 0}^\myN \widehat{\mylambda}_k \tau^{2k+1} \qnm{L^k\widehat{u}_\myN^{(k)}}^2, 
		\label{eq-ode-ex}
\end{align} 
where the last equality follows from \cref{lem:quadraticform,lem:cholHilbert}, $\widehat{u}_\myN^{(k)} := \sum_{j = k}^\myN \widehat{\mu}_{k,j}(\tau L)^{j-k} u(t^n)$, and $\widehat{\mylambda}_k$ and $\widehat{\mu}_{k,j}$ are defined in \eqref{eq:lambdamu_hat}. 
Hence, by combining \cref{eq-ode-ex} with \cref{eq:ode-ex-3}, we obtain  
\begin{equation}\label{eq:WKL365}
		\nm{u(t^{n+1})}^2 - \nm{u(t^n)}^2= -\lim_{\myN\to \infty} \sum_{k = 0}^\myN \widehat{\mylambda}_k \tau^{2k+1} \qnm{L^k\widehat{u}_\myN^{(k)}}^2 = -\lim_{\myN\to \infty} \sum_{k = 0}^{\infty} \widehat{\mylambda}_k \tau^{2k+1} \qnm{L^k\widehat{u}_\myN^{(k)}}^2 1_{\{0,1,\dots, \myN\}}(k),
\end{equation}
where $1_{\{0,1,\dots N\}}(\cdot)$ is the indicator function. 
Note that 
\begin{equation}\label{WKL331}
	\widehat{\mylambda}_k \tau^{2k+1} \qnm{L^k\widehat{u}_\myN^{(k)}}^2 1_{\{0,1,\cdots, \myN\}}(k) 
	\leq  2 \tau \| L \| \| u(t^n) \|^2  \left( \sum_{j=k}^N \sqrt{\widehat{\mylambda}_k}\widehat{\mu}_{k,j}  (\tau\| L \| )^j  \right)^2  =: {\mathcal B}_k.
\end{equation}
The upper bound ${\mathcal B}_k$ satisfies 
\begin{equation}\label{WKL332}
\sum_{k=0}^\infty {\mathcal B}_k \le \| u(t^n) \|^2 \left(  e^{ 2 \tau \| L \|  } -1 \right) <\infty,
\end{equation}
because for any integer $M \ge N$,  
\begin{align*}
\sum_{k=0}^M {\mathcal B}_k & \le 2 \tau \| L \| \| u(t^n) \|^2 \sum_{k=0}^M \left( \sum_{j=k}^M \sqrt{\widehat{\mylambda}_k}\widehat{\mu}_{k,j}  (\tau\| L \| )^j  \right)^2
\\
&= 2 \tau \| L \| \| u(t^n) \|^2 \sum_{i=0}^M \sum_{j=0}^M \frac{ (\tau \| L \| )^{i+j} }{ i! j! (i+j+1) } 
= 2 \| u(t^n) \|^2 \int_{0}^{ \tau \| L \| } \left( \sum_{i=0}^M \frac{x^i}{i!}  \right)^2 {\rm d} x 
\\
& \le 2 \| u(t^n) \|^2 \int_{0}^{ \tau \| L \| } e^{ 2x } {\rm d} x  
= \| u(t^n) \|^2 \left(  e^{ 2 \tau \| L \|  } -1 \right),
\end{align*}
where we have used \cref{lem:cholHilbert} in the first equality. 
%
Due to \cref{WKL331,WKL332}, we  can again invoke the dominated convergence theorem to exchange the limit and the infinite summation in \cref{eq:WKL365} to obtain
\begin{equation*}
		\nm{u(t^{n+1})}^2 - \nm{u(t^n)}^2=- \sum_{k = 0}^{\infty} \lim_{\myN\to \infty} \widehat{\mylambda}_k \tau^{2k+1} \qnm{L^k\widehat{u}_\myN^{(k)}}^2 1_{\{0,1,\cdots, \myN\}}(k)
		 =- \sum_{k = 0}^{\infty} \widehat{\mylambda}_k \tau^{2k+1} \qnm{L^k\widehat{u}^{(k)}}^2,
\end{equation*}
which completes the proof. 
\end{proof}


\section{Proof of \cref{lem:uuhat-cid}}\label{proof:uuhat-cid}


\begin{proof}
	When $i>j$ or $i=j$, the identity is obviously true. 
	In the following, 
	we only focus on the case of $i < j$. 
	Define 
	$$a_\kk := \binom{\mys-\kk}{j-\kk}^{-1}\binom{2\mys-\kk}{j-i-\kk}\binom{i+j+1}{\kk}(-1)^\kk
	= \frac{ (j-\kk)! (s-j)! (2s-\kk)! (i+j+1)!  }{ (s-\kk)!  (j-i-\kk)! (2s-j+i)! \kk! (i+j+1-\kk)! } (-1)^\kk.
	$$
	Then we have 
	$$a_0 = \frac{ j! (s-j)! (2s)!  }{ s!  (j-i)! (2s-j+i)! }, \qquad 
	\frac{a_\kk}{a_0} = 
	\prod_{k=0}^{\kk-1} \frac{a_{k+1}}{a_{k}}
	= \prod_{k=0}^{\kk-1} \left( \frac{ ( k-j+i ) (k-s) ( k-i-j-1 )  }{ (k-2s)  (k-j)} \frac{1}{k+1} \right).$$
	Using the rising factorial notation \cref{eq:Def_Pochhammer},
	 one can reformulate the sum in \cref{lem:uuhat-cid} as
\begin{equation}\label{eq:WKL00}
		\sum_{\kk = 0}^{j-i} a_\kk = a_0 \sum_{\kk = 0}^{j-i} \frac{  (i-j)_\kk (-s)_\kk ( -i-j-1 )_\kk  }{ (-2s)_\kk (-j)_\kk   } \frac{1}{ \kk ! }
	= a_0 \sum_{\kk = 0}^{\infty} \frac{  (i-j)_\kk (-s)_\kk ( -i-j-1 )_\kk  }{ (-2s)_\kk (-j)_\kk   } \frac{1}{ \kk ! }, 
\end{equation}
	where we have used the fact $(i-j)_\kk = 0$ for $\kk > j-i$ and $\kk \in \mathbb N$. By using the notation $\pFq{3}{2}{}{}{}$ from the theory of generalized hypergeometric functions \cite{Watson1925}, the above series can also be represented as 
	$$
	\sum_{\kk = 0}^{\infty} \frac{  (i-j)_\kk (-s)_\kk ( -i-j-1 )_\kk  }{ (-2s)_\kk (-j)_\kk   } \frac{1}{ \kk ! }=\pFq{3}{2}{i-j,-s,-i-j-1}{ -2s, -j }{1} = \pFq{3}{2}{-n,c,2c+2d+n-1}{ 2c, c+d }{1} 
	$$
	with $n:=j-i \in \mathbb N$, $c:=-s$, and $d:=s-j$. We use Watson's formula \cite{Watson1925} 
	 for such hypergeometric series: 
\begin{equation}\label{eq:Watson}
		\pFq{3}{2}{-n,c,2c+2d+n-1}{ 2c, c+d }{1} = 
		\begin{cases}
		\displaystyle
	 \frac{ n! \Gamma( c+\frac12 n ) \Gamma(d+\frac12 n) \Gamma(2c) \Gamma(c+d) }{ (\frac12 n)! \Gamma( c+d+\frac12 n ) \Gamma( 2 c + n ) \Gamma(c) \Gamma(d) },~ & \mbox{if $n$ is even,} 
	 \\
	 0,~ & \mbox{if $n$ is odd.} 
	\end{cases}
\end{equation}
If $n=j-i$ is even, define $m :=  \frac{j-i}2 = \frac12 n \in \mathbb N$. 
	Note that the singularity in \cref{eq:Watson} is removable, 
	because 
	\begin{align}\label{eq:WKL11}
	&	\frac{ \Gamma( x+m ) } { \Gamma(x) } = \frac{ (x+m-1) \Gamma(x+m-1) }{ \Gamma(x) } = \dots = \prod_{\kk=0}^{m-1} (x+\kk)  \neq 0, \qquad x=c,d,c+d,
	\\ \label{eq:WKL22}
	& \frac{ \Gamma(2c) }{ \Gamma(2c+n) } = \frac{ \Gamma(2c) }{ (2c+n-1) \Gamma(2c+n-1) } = \dots = \prod_{\kk=0}^{n-1} \frac1{ 2c+\kk } 
	=  \frac{ (2s-j+i)! }{ (2s)! } > 0,
	\end{align}
	where the formula $\Gamma(x+1)=x\Gamma(x)$ has been used repeatedly. 
	It follows from \cref{eq:WKL11} that 
	\begin{align}\label{eq:WKL33}
		&\frac{ \Gamma( c+\frac12{n} ) } { \Gamma(c) } = \prod_{\kk=0}^{m-1} ( -s +\kk) = (-1)^m \frac{ s! }{ (s-m)! } = (-1)^m \frac{ s! }{ (s-\frac{j-i}2)! },
		\\ \label{eq:WKL44}
	 & \frac{ \Gamma( d+\frac12{n} ) } { \Gamma(d) } = \prod_{\kk=0}^{m-1} ( s-j +\kk) = ( s-j ) \frac{ ( s-j+m-1 )! }{ (s-j)! } 
	 = (s-j) \frac{ (s-1-\frac{i+j}2)! }{ (s-j)! },
	 \\ \label{eq:WKL55}
	 & \frac{ \Gamma(c+d) }{ \Gamma(c+d+\frac12 n) } 
	 = (-1)^m \prod_{\kk=0}^{m-1} (j-\kk)^{-1} = (-1)^m \frac{ (j-m)! }{ j! } = (-1)^m \frac{ ( \frac{i+j}2 )! }{ j! }.
	\end{align}
	Substituting \cref{eq:WKL22}--\cref{eq:WKL55} into \cref{eq:Watson} and combining \cref{eq:WKL00} with \cref{eq:Watson}, we obtain for $i \equiv j~({\mathrm {mod}}~2)$ that 
	\begin{equation*}
		\sum_{\kk = 0}^{j-i} a_\kk = a_0 \frac{ n! }{ (\frac12 n)! }  \frac{ s! }{ (s-\frac{j-i}2)! }  (s-j) \frac{ (s-1-\frac{i+j}2)! }{ (s-j)! }  \frac{ (2s-j+i)! }{ (2s)! }  \frac{ ( \frac{i+j}2 )! }{ j! } 
		=\frac{(\mys-1-\frac{i+j}{2})!(\frac{i+j}{2})!}{(\mys-\frac{j-i}{2})!(\frac{j-i}{2})!}(s-j),
	\end{equation*}
which completes the proof. 
\end{proof}



\section{Proof of \cref{lem:identities}}\label{sec:identities}

\begin{proof}
	By the definition of the Pochhammer symbol, one can deduce that 
	\begin{align*}
		(x+n)! &= x! (x+1) (x+2) \cdots (x+n) = x! (x+1)_n,
		\\
		(x)_n & = \left( \prod\limits_{0\le 2i \le n-1}  (x+2i) \right) \cdot \left( \prod\limits_{0\le 2i+1 \le n-1}  (x+2i+1) \right)
		\\
		& =  x (x+2) \cdots \left( x + 2 \left \lceil{ \frac{n}2 }\right \rceil -2  \right)   \cdot  (x+1) (x+3) \cdots \left( x + 2 \left \lfloor{ \frac{n}2 }\right \rfloor -1  \right)  
		\\
		& =  2^{ \left \lceil{ \frac{n}2 }\right \rceil }  \left( \frac{x}2 \right)_{ \left \lceil{ \frac{n}2 }\right \rceil  }  \cdot   2^{ \left \lfloor{ \frac{n}2 }\right \rfloor }   \left( \frac{x+1}2 \right)_{ \left \lfloor{ \frac{n}2 }\right \rfloor  } 
		= 
		2^n \left( \frac{x}2 \right)_{ \left \lceil{ \frac{n}2 }\right \rceil  } \left( \frac{x+1}2 \right)_{ \left \lfloor{ \frac{n}2 }\right \rfloor  },
		\\
		\frac{ (x+i)! }{ (x-j)! } &=  (x-j+1)(x-j+2) \cdots (x-1) x \cdot (x+1) \cdots (x+i)
		= (-1)^j (-x)_j (x+1)_i. 
	\end{align*}
\end{proof}

\section{Proof of \cref{lem:alpha-beta}}\label{sec:alpha-beta}

\begin{proof}
	If $i>j$, then by definition \eqref{eq:Defalpha} we know that $  \myalpha_{2i,2j}^{(\mys)} = \myalpha_{2i-1,2j-1}^{(\mys)} =0 $. 
	On the other hand, when $i>j$, we have $(-j)_i=0$ and $(1-j)_{i-1}=0$, 
	which imply the right-hand sides of \eqref{eq:AB1} and \eqref{eq:AB2} are both zero. Hence the identities \eqref{eq:AB1} and \eqref{eq:AB2} are true for  $i>j$. 
	In the following, we focus on the nontrivial case that $i\le j$. 
	
	{\tt Proof of \eqref{eq:AB1} for $i\le j$.} We observe that 
	\begin{align*}
		\myalpha_{2i,2j}^{(\mys)} & = \frac{ \mys! }{ (2\mys)! } \frac{ 2 \sqrt{4i-1} }{ (2i+2j)! } 
		\frac{ (2\mys+2i-2j)! ( \mys - i -j )! (  i+j )!  } 
		{ (\mys-2j)! (\mys - j+i )! (  j-i )!  } =  \frac{ \mys! }{ (2\mys)! } \frac{ 2 \sqrt{4i-1} }{ (\mys-2j)! }  \Pi_1 \Pi_2
	\end{align*}
	with 
	\begin{align*}
		\Pi_1 &:= \frac{ (\mys-i-j)! }{ (\mys-j+i)! } ( 2 \mys - 2 j + 2i )! 
		\stackrel{\eqref{ID-1}}{=} \frac{ (\mys-j-i)! }{ (\mys-j+i)! } ( 2\mys - 2j )! (2\mys - 2j +1)_{2i}
		\\
		& \stackrel{\eqref{ID-2}}{=}  \frac{ (\mys-j-i)! }{ (\mys-j+i)! } ( 2\mys - 2j )! 
		2^{2i}  \left( \mys-j+\frac12 \right)_i ( \mys-j+1 )_i
		\\
		& \stackrel{\eqref{ID-3}}{=}  \frac{ 1 }{ (-1)^i (j-\mys)_i (\mys-j+1)_i } ( 2\mys - 2j )! 
		2^{2i}  \left( \mys-j+\frac12 \right)_i ( \mys-j+1 )_i	
		= \frac{ (2\mys-2j)! 2^{2i} \left( \mys -j +\frac12 \right)_i }{ (j-\mys)_i (-1)^i },
	\end{align*}
	and 
	\begin{align*}
		\Pi_2 &:= \frac{ (j+i)! }{ (j-i)! } \frac1 { (2i+2j)! } 
		\stackrel{\eqref{ID-3}}{=} \frac{ (-1)^i (-j)_i (j+1)_i }{ (2i+2j)! }
		\stackrel{\eqref{ID-1}}{=} \frac{ (-1)^i (-j)_i (j+1)_i }{ (2j)! (2j+1)_{2i} }
		\\ & 
		\stackrel{\eqref{ID-2}}{=} \frac{ (-1)^i (-j)_i (j+1)_i }{ (2j)! 2^{2i} 
			\left(j + \frac12 \right)_i (j+1)_i }
		= \frac{ (-1)^i (-j)_i }{ 2^{2i} \left( j+\frac12 \right)_i } 
		\frac1{ (2j)! }.
	\end{align*}
	It follows that 
	$$
	\Pi_1  \Pi_2 = \frac{ ( 2\mys -2j )! }{ (2j)! } \frac{ \left( \mys-j+\frac12 \right)_i (-j)_i  }{ (j-\mys)_i \left( j+\frac12 \right)_i }. 
	$$ 
	Therefore, we obtain
	\begin{align*}
		\myalpha_{2i,2j}^{(\mys)} &= 2 \sqrt{4i-1} 	\frac{ \mys! }{ (2\mys)! } 
		\frac{ ( 2\mys -2j )! }{ (2j)! (\mys-2j)! } \frac{ \left( \mys-j+\frac12 \right)_i (-j)_i  }{ (j-\mys)_i \left( j+\frac12 \right)_i } 
		= 2 \sqrt{4i-1}  \mybeta_{2j}^{(\mys)} \frac{ \left( \mys-j+\frac12 \right)_i (-j)_i  }{ (j-\mys)_i \left( j+\frac12 \right)_i },
	\end{align*}
	which yields \eqref{eq:AB1}. 
	
	{\tt Proof of \eqref{eq:AB2} for $i\le j$.}  We observe that 
	\begin{align*}
		\myalpha_{2i-1,2j-1}^{(\mys)} &= \frac{\mys!}{ (2\mys)! } 
		\frac{ 2\sqrt{4i-3} }{ (2i+2j-2)! } 
		\frac{ (2\mys-2j+2i)! }{ (\mys-j+i)! } 
		\frac{ (\mys-i-j+1)! }{ (\mys-2j+1)! } 
		\frac{ (j+i-1)! } { (j-i)! }
		=\frac{ \mys! }{ (2\mys)! } \frac{ 2\sqrt{4i-3} }{ (\mys-2j+1)! } 
		\Pi_3 \Pi_4
	\end{align*}
	with 
	\begin{align*}
		\Pi_3 & := \frac{ (\mys-i-j+1)! }{ (\mys-j+i)! } ( 2\mys-2j+2i )! 
		\stackrel{\eqref{ID-1}}{=} \frac{ (\mys-i-j+1)! }{ (\mys-j+i)! } ( 2\mys-2j+1 )! (2\mys-2j+2)_{2i-1}
		\\
		& \stackrel{\eqref{ID-2}}{=} \frac{ (\mys-i-j+1)! }{ (\mys-j+i)! } ( 2\mys-2j+1 )! 2^{2i-1} ( \mys-j+1 )_i \left(\mys-j+\frac32 \right)_{i-1}
		\\
		& \stackrel{\eqref{ID-3}}{=} \frac{ ( 2\mys-2j+1 )! }{  (-1)^{i-1} (j-\mys)_{i-1} (\mys-j+1)_i  }  2^{2i-1} ( \mys-j+1 )_i \left( \mys-j+\frac32 \right)_{i-1}
		\\
		& = \frac{ (2\mys-2j+1)! 2^{2i-1} \left( \mys+\frac32 - j \right)_{i-1} }
		{ (-1)^{i-1} (j-\mys)_{i-1}  },
	\end{align*}
	and 
	\begin{align*}
		\Pi_4 & := \frac{ (j+i-1)! }{ (j-i)! } \frac{1}{ (2j+2i-2)! } 
		\stackrel{\eqref{ID-3}}{=} \frac{ (-1)^i (-j)_i (j+1)_{i-1} }{ (2j+2i-2)! }  
		=\frac{ (-1)^{i-1} (1-j)_{i-1} (j)_i }{ (2j+2i-2)! } 
		\\
		&
		\stackrel{\eqref{ID-1}}{=} \frac{ (-1)^{i-1} (1-j)_{i-1} (j)_i }
		{ (2j-1)! (2j)_{2i-1} } 
		\stackrel{\eqref{ID-2}}{=} \frac{ (-1)^{i-1} (1-j)_{i-1} (j)_i }
		{ (2j-1)! 2^{2i-1} (j)_i \left( j+\frac12 \right)_{i-1} } 
		= \frac{ (-1)^{i-1} (1-j)_{i-1}  }{ 2^{2i-1} \left( j+\frac12 \right)_{i-1} } \frac1{ (2j-1)! }.
	\end{align*}
	It follows that 
	\begin{equation*}
		\Pi_3 \Pi_4 = \frac{ (2\mys-2j+1)! }{ (2j-1)! } 
		\frac{ \left( \mys+\frac32-j \right)_{i-1} }{ (j-\mys)_{i-1} } 
		\frac{ (1-j)_{i-1} }{ \left( j+\frac12 \right)_{i-1} }.
	\end{equation*}
	Therefore, we complete the proof by noting  
	\begin{align*}
		\myalpha_{2i-1,2j-1}^{(\mys)} &= 2\sqrt{4i-3} \frac{ \mys! }{ (2\mys)! } \frac{ (2\mys-2j+1)! }{ (2j-1)! (\mys-2j+1)! } 
		\frac{ \left(\mys+\frac32-j \right)_{i-1} (1-j)_{i-1} }
		{ (j-\mys)_{i-1}\left( j+\frac12 \right)_{i-1} } 
		\\
		& = 2\sqrt{4i-3} \mybeta_{2j-1}^{ (\mys) } \frac{ \left( \mys+\frac32-j \right)_{i-1} (1-j)_{i-1} }
		{ (j-\mys)_{i-1}\left( j+\frac12 \right)_{i-1} }. 
	\end{align*}

	
\end{proof}


\renewcommand\baselinestretch{0.926}

\bibliographystyle{siamplain}
\bibliography{refs}

\end{document}